\numberwithin{equation}{section}
\newtheorem{theorem}{Theorem} 
\newtheorem{proposition}{Proposition} 
\newtheorem{lemma}{Lemma} 
\theoremstyle{definition}
\newtheorem{remark}{Remark} 
\newtheorem{definition}{Definition} 
\newtheorem{assumption}{Assumption}
\renewcommand{\em}[1]{\normalem \em{#1}}
\renewcommand{\emph}[1]{\normalem \emph{#1}}
\newcommand{\ie}{{\em i.e.}, }
\newcommand{\nn}{\mathbb{N}} 
\newcommand{\real}{\mathbb{R}} 
\newcommand{\norm}[1]{\left\Vert {#1} \right\Vert} 
\newcommand{\erl}{\left(-\infty , +\infty\right]} 
\DeclareMathOperator*{\argmin}{\arg\!\min}
\newcommand{\dist}{\mathrm{dist}} 
\newcommand{\prox}{\mathrm{prox}} 
\newcommand{\act}[1]{\left\langle {#1} \right\rangle} 
\newcommand{\seq}[2]{\left\{{#1}_{{#2}}\right\}_{{#2} \in \mathbb{N}}}
\newcommand{\Seq}[2]{\left\{{#1}^{{#2}}\right\}_{{#2} \in \mathbb{N}}}
\newcommand{\ba}{{\bf a}}
\newcommand{\bo}{{\bf 0}}
\newcommand{\bu}{{\bf u}}
\newcommand{\bx}{{\bf x}}
\newcommand{\by}{{\bf y}}
\newcommand{\bz}{{\bf z}}
\newcommand{\bb}{{\bf b}}
\newcommand{\bw}{{\bf w}}
\title{Convex Bi-Level Optimization Problems with Non-smooth Outer Objective Function}
\author{Roey Merchav\footnote{Faculty of Data and Decision Sciences, Technion--Israel Institute of Technology, Haifa 3200003, Israel. E-mail: merhav.roey@campus.technion.ac.il.} \and Shoham Sabach\footnote{Faculty of Data and Decision Sciences, Technion--Israel Institute of Technology, Haifa 3200003, Israel. E-mail: ssabach@technion.ac.il. This work was supported by the Israel Science Foundation (Grant 2480/21)}}
\date{\today}
\begin{document}
\maketitle

\begin{abstract}
	In this paper, we propose the Bi-Sub-Gradient (Bi-SG) method, which is a generalization of the classical sub-gradient method to the setting of convex bi-level optimization problems. This is a first-order method that is very easy to implement in the sense that it requires only a computation of the associated proximal mapping or a sub-gradient of the outer non-smooth objective function, in addition to a proximal gradient step on the inner optimization problem. We show, under very mild assumptions, that Bi-SG tackles bi-level optimization problems and achieves sub-linear rates both in terms of the inner and outer objective functions. Moreover, if the outer objective function is additionally strongly convex (still could be non-smooth), the outer rate can be improved to a linear rate. Last, we prove that the distance of the generated sequence to the set of optimal solutions of the bi-level problem converges to zero. 		
	
\end{abstract}

\section{Introduction} \label{Sec:Introduction}
	Bi-level optimization problems consist, in a hierarchical sense, of two optimization problems referred to as inner and outer problems. This class of problems is a very active area of research with practical applications in many different areas (for instance, in the field of machine learning). For a recent comprehensive review of bi-level optimization problems and applications, see \cite{DZ2020} and references therein. We also refer interested readers to the following recent papers \cite{AY2019,F2021,SNK2022} that discuss various interesting applications, which fit to the setting of this paper.
\medskip

	In this paper, we are interested in a specific class of what is often called simple bi-level optimization problems. More precisely, following the terminology of inner and outer problems, we are interested in the following bi-level optimization problems. The \emph{outer} level is given by the following convex constrained minimization problem
 	\begin{equation} \label{Prob:OP} \tag{OP}
		\min_{\bx \in X^{\ast}}  \omega\left(\bx\right),
 	\end{equation}
	where $X^{\ast}$ is the, assumed nonempty, set of minimizers of the \emph{inner} level problem. Here, in this paper, the inner problem is given by the classical convex composite model
	\begin{equation} \label{Prob:IP} \tag{IP}
		\min_{\bx \in \real^{n}} \left\{ \varphi\left(\bx\right) := f\left(\bx\right) + g\left(\bx\right) \right\},
	\end{equation}
	where $f$ is a continuously differentiable function and $g$ is an extended valued (possibly non-smooth) function. It should be noted that the outer objective function $\omega$ is only assumed to be convex (could be non-smooth).
\medskip

	In \cite{TA77-B}, Tikhonov proposed the first regularization method to deal with bi-level optimization problems, and it has evolved into the most popular approach to solve such problems. This approach does, however, have several limitations (see \cite{BS2014} for more details and relevant references). One of the major limitations is the determination of the ``right" regularization parameter. Accordingly, the performance of this technique is significantly impacted by this parameter. Therefore, in practice, the regularized problem is solved with multiple parameters in order to select one that works best. This implies an additional computational complexity that is increased with the number of parameters being tested.
\medskip

	This issue was first overcame by Solodov, who proposed a projected gradient method for the regularized problem. More precisely, in \cite{S07}, the following regularized problem (for some $\lambda > 0$),
	\begin{equation} \label{Prob:RP} \tag{P$_\lambda$}
		\min_{\bx \in \real^{n}}  \left\{ \varphi_{\lambda}\left(\bx\right) := \varphi\left(\bx\right) + \lambda\omega\left(\bx\right) \right\},
	\end{equation}
	was considered under the assumption that $\omega$ is smooth with a Lipschitz continuous gradient and $g$ (the non-smooth part of $\varphi$) is an indicator function of a certain closed and convex set. Starting with any $\bx^{0} \in \real^{n}$, by performing one step of projected gradient on (P$_{\lambda_{k}}$) for each $k \in \nn$, the distance of the generated sequence $\Seq{\bx}{k}$ to the set of optimal solutions of problem \eqref{Prob:OP}, was proved to converge to $0$. To this end, the sequence of regularization parameters $\seq{\lambda}{k}$ should satisfy that $\lambda_{k} \rightarrow 0$ as $k \rightarrow \infty$ and $\sum_{k = 1}^{\infty} \lambda_{k} = \infty$. Furthermore, subsequent studies \cite{S2007,HS2017} have delved into the realm of non-smooth bi-level optimization problems and established their convergence properties. For example, a bundle method \cite{S2007} and an $\varepsilon$-subgradient method \cite{HS2017} were analyzed in the setting of non-smooth bi-level problems. Apart from these three papers, the majority of research conducted on bi-level optimization problems has primarily focused on proving convergence results. For more papers on both smooth and non-smooth bi-level optimization problems, see \cite{X2011,YYY11,GRV2018,SVZ2021} and reference therein. However, there are only a few works (see more details below), which discuss the question of proving rate of convergence for algorithms tackling bi-level optimization problems.
\medskip

	This paper discusses first-order algorithms for solving bi-level optimization problems, which have a proven rate of convergence. Recently, some studies have proposed first-order algorithms for solving the problem \eqref{Prob:OP} with a proven rate of convergence result in terms of the inner problem \eqref{Prob:IP}. We will mention a few relevant works, whose first two motivated much of this research and are therefore described in detail below (see Section \ref{Sec:Bi-SG}). In \cite{BS2014}, which assumes that $g$ is an indicator function, the authors propose an algorithm with an inner rate of $O(1/\sqrt{k})$, where $k$ is the number of iterations. For an improved rate of $O(1/k)$, see \cite{SS2017}. It should be noted that both of these works are valid for the case that the outer objective function is smooth and strongly convex. More recently, in \cite{AY2019}, the authors deal with non-smooth outer objective functions, but the strong convexity is still needed. They propose an algorithm, which achieves an inner rate of $O(1/k^{0.5 - \varepsilon})$, where $0 < \varepsilon < 1/2$ is a parameter that is used in the algorithm.  Very recently, in \cite{KY2021}, the authors propose the first algorithm for tackling bi-level optimization problems\footnote{It should be noted that their setting of bi-level optimization problems is more rich since the inner level problem is given as a solution set of a variational inequality.} with non-smooth and convex (that is, not necessarily strongly convex) outer objective function, when the function $g$ of the inner problem is assumed to be an indicator function of a closed and convex set. The authors propose an algorithm that achieves the rates of convergence of $O(1/k^{1/2 - \alpha})$ and $O(1/k^{\alpha})$ in terms of the inner and outer objective functions, respectively, where $\alpha \in \left(0 , 1/2\right)$ is a hyper-parameter. It should be noted that in \cite{KY2021}, the authors also study a randomized version of the paper, due to the block setting of the inner problem. Even though we believe our results can be generalized to the block setting, we focus on the setting mentioned above to keep the paper concise and easy to follow. Two additional recent works that achieve rate of convergence results but in different settings are \cite{SNK2022} that study non-smooth bi-level optimization problems in the setting of online optimization and \cite{F2021} for distributed optimization in directed networks, see also references therein for more works on bi-level optimization problems in these domains.
\medskip

	The main goal of this paper, as described above, is to study bi-level optimization problems where the outer objective function $\omega(\cdot)$ is neither strongly convex nor smooth. Motivated by the recent work \cite{KY2021}, we tackle this challenge by developing a simple algorithm, called Bi-Sub-Gradient (Bi-SG), and proving that, if the outer objective function $\omega(\cdot)$ is convex, Bi-SG enjoys the rates of convergence of $O(1/k^{\alpha})$ and $O(1/k^{1-\alpha})$ in terms of the inner and outer objective functions, respectively, where $\alpha \in \left(1/2 , 1\right)$ is a hyper-parameter (when $\alpha = 1$ we obtain only an inner rate). Focusing on bi-level optimization problems where the inner problem is given as in \eqref{Prob:IP}, this work improves the rates obtained in \cite{KY2021} in two ways: (i) the rates in terms of the inner objective function are proven for the sequence itself and not the ergodic sequence and, (ii) the sub-linear rates are better. Moreover, our work deals with a more general class of inner optimization problem in comparison to \cite{KY2021}, which assumes that $g$ is an indicator function. In addition, we analyze the proposed Bi-SG algorithm in the special case where the outer objective function $\omega(\cdot)$ is strongly convex (but could be non-smooth). In this case, we prove that Bi-SG provides a linear rate of convergence in terms of the outer objective function $\omega(\cdot)$, while keeping the same inner rate of $O(1/k^{\alpha})$. We also prove that the distance of the generated sequence from the set of optimal solutions of the problem \eqref{Prob:OP} converges to $0$. 
\medskip

	The paper is organized in the following way. We begin with the next section that is devoted to the new notion of quasi Lipschitz mappings, which is a new property that is very helpful in our analysis and seems to have an importance by its own. In Section \ref{Sec:Bi-SG}, we discuss the optimization framework of the class of the studied bi-level problems, and describe in details the Bi-SG algorithm. In Section \ref{Sec:Theory}, we first give all of the notations and auxiliary results that are needed for the forthcoming results. We then prove rate of convergence results of Bi-SG in terms of both the inner and the outer objective functions. This section also includes our improved results in the case where the outer function is additionally strongly convex (still could be non-smooth). Section \ref{Sec:Numerical Experiments} contains numerical experiments comparing Bi-SG to existing methods and showing its computational superiority.
\medskip

	Throughout the paper we denote vectors by boldface letters. The notation $\act{\cdot , \cdot}$ is used to denote the inner product of two vectors and $\norm{\cdot}$ is the norm associated with this inner product, unless stated otherwise.

\section{The Quasi Lipschitz Property}
	In this section, we study the new property that we call quasi Lipschitz, which plays a key role in the main theoretical results to be developed below and could be of importance in other contexts of optimization.
	\begin{definition}[Quasi Lipschitz] \label{D:QuasiL}
 		Let $F : \real^{n} \rightarrow \real^{n}$ be a mapping. If there exist two non-negative constants $d_{1}, d_{2} \geq 0$ such that for any $\bx \in \real^{n}$, we have that $\norm{F\left(\bx\right)} \leq \max\left\{d_{1}, d_{2}\norm{\bx}\right\}$, then we say that $F$ is quasi Lipschitz with constants $\left(d_{1} , d_{2}\right)$.
 	\end{definition}
 	It should be noted that if a mapping is quasi Lipschitz with constants $\left(d_{1} , d_{2}\right)$, then it is also a quasi Lipschitz with constants $\left(d_{1}' , d_{2}'\right)$ for any constants $d_{1} \leq d_{1}'$ and $d_{2} \leq d_{2}'$.
\medskip
 
 	We begin with some simple properties of quasi Lipschitz mappings.
	\begin{proposition} \label{P:BasicQL}
		\begin{itemize}
			\item[$\rm{(i)}$] Let $\alpha \in \mathbb{R}$. If $F$ is quasi Lipschitz with constants $\left(d_{1} , d_{2}\right)$, then $\alpha F$ is quasi Lipschitz with constants $\left(|\alpha| d_{1} , |\alpha| d_{2}\right)$. 
    			\item[$\rm{(ii)}$] If $F_{1}$ and $F_{2}$ are quasi Lipschitz mappings with constants $\left(d_{1,1} , d_{2,1}\right)$ and $\left(d_{1,2} , d_{2,2}\right)$, then $F \equiv F_{1} + F_{2}$ is quasi Lipschitz with constants $\left(2\left(d_{1,1} + d_{1,2}\right) , 2\left(d_{2,1} + d_{2,2}\right)\right)$. In addition, the composite mapping $F \equiv F_{1} \circ F_{2}$ is quasi Lipschitz with constants $\left(2\left(d_{1 , 1} + d_{2 ,1}d_{1 , 2}\right) , 2d_{2 ,1}d_{2 , 2}\right)$\footnote{Both results remain valid for any finite number of mappings.}.
    			\item[$\rm{(iii)}$] Let $A : \real^{n} \rightarrow \real^{n}$ be a linear mapping. If $F$ is quasi Lipschitz with constants $\left(d_{1} , d_{2}\right)$, then $F \circ A$ is quasi Lipschitz with constants $\left(d_{1} , d_{2}\norm{A}\right)$.
		\end{itemize}
	\end{proposition}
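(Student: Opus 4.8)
The plan is to verify each of the three items directly from Definition~\ref{D:QuasiL}, using only the triangle inequality, the monotonicity of the $\max$ under scaling by a nonnegative constant or under adding a common nonnegative term, and the elementary bound $p+q \le 2\max\{p,q\}$ valid for all $p,q \ge 0$. For (i), since multiplication by the nonnegative scalar $|\alpha|$ commutes with the $\max$, we have $|\alpha|\max\{d_{1},d_{2}\norm{\bx}\} = \max\{|\alpha|d_{1}, |\alpha|d_{2}\norm{\bx}\}$, so $\norm{\alpha F(\bx)} = |\alpha|\norm{F(\bx)} \le \max\{|\alpha|d_{1},|\alpha|d_{2}\norm{\bx}\}$ for every $\bx \in \real^{n}$, which is exactly the assertion.

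For the sum in (ii), the triangle inequality gives $\norm{F_{1}(\bx)+F_{2}(\bx)} \le \max\{d_{1,1},d_{2,1}\norm{\bx}\} + \max\{d_{1,2},d_{2,2}\norm{\bx}\}$; bounding this sum of two nonnegative quantities by twice the larger one yields $2\max\{d_{1,1},d_{1,2},d_{2,1}\norm{\bx},d_{2,2}\norm{\bx}\}$, and then using $\max\{d_{1,1},d_{1,2}\}\le d_{1,1}+d_{1,2}$ and $\max\{d_{2,1},d_{2,2}\}\le d_{2,1}+d_{2,2}$ produces the stated constants $\left(2(d_{1,1}+d_{1,2}),2(d_{2,1}+d_{2,2})\right)$. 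For the composition in (ii), I would apply the quasi Lipschitz bound for $F_{1}$ at the point $F_{2}(\bx)$, then the bound for $F_{2}$: $\norm{F_{1}(F_{2}(\bx))} \le \max\{d_{1,1},d_{2,1}\norm{F_{2}(\bx)}\} \le \max\{d_{1,1}, d_{2,1}\max\{d_{1,2},d_{2,2}\norm{\bx}\}\}$. Since $d_{2,1}\ge 0$, it passes inside the inner $\max$, giving $\max\{d_{1,1}, d_{2,1}d_{1,2}, d_{2,1}d_{2,2}\norm{\bx}\}$, and absorbing the first two entries via $\max\{d_{1,1},d_{2,1}d_{1,2}\} \le d_{1,1}+d_{2,1}d_{1,2} \le 2(d_{1,1}+d_{2,1}d_{1,2})$ yields the constants $\left(2(d_{1,1}+d_{2,1}d_{1,2}), 2d_{2,1}d_{2,2}\right)$. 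The extension to a finite number of mappings follows by a routine induction.

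For (iii), using $\norm{A\bx} \le \norm{A}\,\norm{\bx}$ together with the monotonicity of the $\max$ in its second argument, $\norm{F(A\bx)} \le \max\{d_{1}, d_{2}\norm{A\bx}\} \le \max\{d_{1}, d_{2}\norm{A}\norm{\bx}\}$, which is the claim. There is no real obstacle in this proposition; the only point deserving a little care is the bookkeeping in (ii), namely tracking where the factors of $2$ enter when one replaces a sum of two nonnegative quantities, or a multi-entry $\max$, by a single comparable expression. The resulting constants are deliberately not optimized, and one could obtain slightly sharper ones at the cost of a less transparent statement.
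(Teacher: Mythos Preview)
Your proof is correct and follows essentially the same approach as the paper: items (i) and (iii) are identical to the paper's argument, and for item (ii) the paper also reduces everything to the pair of elementary inequalities $\max\{a,b\}\le a+b$ and $a+b\le 2\max\{a,b\}$, merely applying them in the reverse order for the sum (first bounding each $\max$ by a sum, then adding, then converting back), while your composition argument matches the paper's verbatim. The minor reordering in the sum case is inconsequential and leads to the same constants.
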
 
	\begin{proof}
    		\begin{itemize}
			\item[$\rm{(i)}$] For any $\bx \in \real^{n}$, from Definition \ref{D:QuasiL}, we have that
				\begin{equation*}    
					\norm{\alpha F\left(\bx\right)} = |\alpha|\norm{F\left(\bx\right)} \leq |\alpha| \max\{d_{1} , d_{2}\norm{\bx}\} = \max\{|\alpha| d_{1} , |\alpha| d_{2}\norm{\bx}\},
				\end{equation*}    
    				which proves the desired result.    
    			\item[$\rm{(ii)}$] For any $\bx \in \real^{n}$, we have for $i = 1 , 2$ that
    				\begin{equation*}    
        				\norm{F_{i}\left(\bx\right)} \leq \max \left\{d_{1 , i} , d_{2 , i}\norm{\bx}\right\} \leq d_{1 , i} + d_{2 , i}\norm{\bx},
				\end{equation*}    
    				where the first inequality follows from Definition \ref{D:QuasiL} and the last inequality follows from the fact that $\max\{a , b\} \leq a + b$ for any $a , b \geq 0$. Therefore
    				\begin{equation*}    
        				\norm{F\left(\bx\right)} \leq \norm{F_{1}\left(\bx\right)} + \norm{F_{2}\left(\bx\right)} \leq \left(d_{1 , 1} + d_{1 , 2}\right) + \left(d_{2 , 1} + d_{2 , 2}\right)\norm{\bx},
				\end{equation*}    
    				and the first desired result now follows from the fact that $a + b \leq \max\{2a , 2b\}$ for any $a , b \geq 0$. For any $\bx \in \real^{n}$, from Definition \ref{D:QuasiL}, we have that
				\begin{align*}    
        				\norm{F\left(\bx\right)} & \leq \max\left\{ d_{1 , 1} , d_{2 , 1}\norm{F_{2}\left(\bx\right)}\right\} \\
     			   	& \leq \max\left\{ d_{1 , 1} , d_{2 , 1}\max\left\{ d_{1 , 2} , d_{2 , 2}\norm{\bx} \right\} \right\} \\
        				& = \max\left\{ d_{1 , 1} , d_{2 ,1}d_{1 , 2} , d_{2 ,1}d_{2 , 2}\norm{\bx} \right\}.
				\end{align*}    
    				Following the same arguments as above we get the second desired result.
    			\item[$\rm{(iii)}$] For any $\bx \in \real^{n}$, from Definition \ref{D:QuasiL}, we have that
				\begin{equation*}    
    					\norm{F\left(A\bx\right)} \leq \max\{d_{1} , d_{2}\norm{A\bx}\} \leq \max\{d_{1} , d_{2}\norm{A}\norm{\bx}\},
				\end{equation*}    
    				which proves the desired result.    
		\end{itemize}\vspace{-0.2in}
	\end{proof}
	Now, we provide two important examples of quasi Lipschitz mappings that are very relevant from the optimization point of view.
 	\begin{proposition} \label{P:ExQuasiL}
		The following statements hold:
		\begin{itemize}
    			\item[$\rm{(i)}$] If $h : \real^{n} \rightarrow \real$ is convex and globally $L$-Lipschitz continuous, then any sub-gradient of $h$ is quasi Lipschitz with constants $\left(L , 0\right)$.
    			\item[$\rm{(ii)}$] If $T : \real^{n} \rightarrow \real^{n}$ is a globally $L$-Lipschitz mapping, that is, $\norm{T\left(\bx\right) - T\left(\by\right)} \leq L\norm{\bx - \by}$ for all $\bx , \by \in \real^{n}$, then it is quasi Lipschitz with constants $\left(2\norm{T\left(\bo\right)} , 2L\right)$. In particular, any nonexpansive mapping is quasi Lipschitz with constants $\left(2\norm{T\left(\bo\right)} , 2\right)$.					
    			\item[$\rm{(iii)}$] If $h : \real^{n} \rightarrow \real$ is continuously differentiable with a Lipschitz continuous gradient with constant $L_{h}$, then $\nabla h$ is quasi Lipschitz with constants $\left(2\norm{\nabla h\left(\bo\right)} , 2L_{h}\right)$.
    		\end{itemize}
    \end{proposition}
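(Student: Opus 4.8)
I would prove the three items in order, reducing (iii) to (ii).

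For item (i), the plan is to invoke the classical characterization that a convex function on $\real^{n}$ is globally $L$-Lipschitz if and only if all of its subgradients are bounded in norm by $L$. To keep the proof self-contained I would argue directly: fix $\bx \in \real^{n}$ and $\xi \in \partial h\left(\bx\right)$. If $\xi = \bo$ then trivially $\norm{\xi} \leq L$. Otherwise, apply the subgradient inequality $h\left(\by\right) \geq h\left(\bx\right) + \act{\xi , \by - \bx}$ at the point $\by = \bx + \xi/\norm{\xi}$, which yields $h\left(\by\right) - h\left(\bx\right) \geq \norm{\xi}$. On the other hand, the $L$-Lipschitz continuity of $h$ gives $h\left(\by\right) - h\left(\bx\right) \leq L\norm{\by - \bx} = L$. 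Combining these two estimates produces $\norm{\xi} \leq L = \max\left\{L , 0 \cdot \norm{\bx}\right\}$, which is exactly the quasi Lipschitz property with constants $\left(L , 0\right)$ in the sense of Definition \ref{D:QuasiL}.

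For item (ii), I would use the triangle inequality together with the Lipschitz property to write, for every $\bx \in \real^{n}$,
\begin{equation*}
	\norm{T\left(\bx\right)} \leq \norm{T\left(\bx\right) - T\left(\bo\right)} + \norm{T\left(\bo\right)} \leq L\norm{\bx} + \norm{T\left(\bo\right)}.
\end{equation*}
Then, invoking the elementary inequality $a + b \leq \max\left\{2a , 2b\right\}$ for $a , b \geq 0$ (already used in the proof of Proposition \ref{P:BasicQL}) with $a = \norm{T\left(\bo\right)}$ and $b = L\norm{\bx}$, I obtain $\norm{T\left(\bx\right)} \leq \max\left\{2\norm{T\left(\bo\right)} , 2L\norm{\bx}\right\}$, establishing that $T$ is quasi Lipschitz with constants $\left(2\norm{T\left(\bo\right)} , 2L\right)$. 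The statement about nonexpansive mappings is the special case $L = 1$. Finally, item (iii) is an immediate consequence of item (ii): since $h$ is continuously differentiable with an $L_{h}$-Lipschitz gradient, the mapping $T = \nabla h$ is globally $L_{h}$-Lipschitz, so applying (ii) to $T = \nabla h$ gives the quasi Lipschitz constants $\left(2\norm{\nabla h\left(\bo\right)} , 2L_{h}\right)$.

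I do not anticipate a genuine obstacle here; all three items are short. The only mildly non-routine point is the bounded-subgradient argument in item (i), and even there the difficulty is just in choosing the right test point $\by$ in the subgradient inequality, after which the bound drops out in one line.
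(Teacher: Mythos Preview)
Your proposal is correct and essentially identical to the paper's proof. The only cosmetic difference is in item (i): the paper applies the subgradient inequality at the test point $\by = \bx + \xi$ (giving $\norm{\xi}^{2} \leq h(\bx + \xi) - h(\bx) \leq L\norm{\xi}$, hence $\norm{\xi} \leq L$ without a separate case for $\xi = \bo$), whereas you use the normalized direction $\by = \bx + \xi/\norm{\xi}$; items (ii) and (iii) match the paper verbatim.
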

    \begin{proof}
    		\begin{itemize}
    			\item[$\rm{(i)}$] Let $\bx \in \real^{n}$. First, pick some $\xi\left(\bx\right) \in \partial h\left(\bx\right)$. From the sub-gradient inequality and the definition of $L$, we get that
				\begin{equation*}
    					\norm{\xi\left(\bx\right)}^{2} = \xi\left(\bx\right)^{T}\left(\bx + \xi\left(\bx\right) - \bx\right) \leq h\left(\bx + \xi\left(\bx\right)\right) - h\left(\bx\right) \leq L\norm{\xi\left(\bx\right)},
				\end{equation*}    
    				which directly implies that $\xi$ is quasi Lipschitz with $d_{1} = L$ and $d_{2} = 0$, as desired.
     		\item[$\rm{(ii)}$] Let $\bx \in \real^{n}$. Since $T$ is globally $L$-Lipschitz, we get that
    				\begin{equation*}
        				\norm{T\left(\bx\right)} \leq \norm{T\left(\bx\right) - T\left(\bo\right)} + \norm{T\left(\bo\right)} \leq L\norm{\bx} + \norm{T\left(\bo\right)}.
				\end{equation*}   
    				Hence, the desired result now follows from the fact that $a + b \leq \max\{2a , 2b\}$ for any $a , b \geq 0$.
    			\item[$\rm{(iii)}$] The result follows immediately from item (ii) with $T \equiv \nabla h$ and $L = L_{h}$.
    		\end{itemize}\vspace{-0.2in}
    \end{proof}
	Now, in order to demonstrate how useful the quasi Lipschitz property is, we would like to mention that there are some important classes of functions that have quasi Lipschitz sub-gradients but are neither smooth nor globally Lipschitz continuous. To this end, in addition to the basic properties of quasi Lipschitz mappings that we have proven above (see Proposition \ref{P:BasicQL}), the following chain rule property will be useful.
	\begin{lemma}[Composition rule for quasi Lipschitz mappings]
		Let $D \subset \real$ and $\psi : \real^{n} \rightarrow D$ be a globally $L$-Lipschitz continuous and convex function. In addition, let $\phi : \real \rightarrow \real$ be a non-decreasing and differentiable function on $D$, such that $\phi$ has a quasi Lipschitz derivative with constants $\left(d_{1} , d_{2}\right)$. Then, any sub-gradient of $h := \phi \circ \psi$ is quasi Lipschitz with constants $\left(2\left(Ld_{1} + Ld_{2}|\psi\left(\bo\right)|\right) , 2L^{2}d_{2}\right)$.
	\end{lemma}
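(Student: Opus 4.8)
The plan is to exploit the sub-differential chain rule to write an arbitrary sub-gradient of $h$ as a non-negative scalar times a sub-gradient of $\psi$, and then to bound the two pieces separately: the scalar via the quasi Lipschitz property of $\phi'$, and the sub-gradient of $\psi$ (together with the size of $\psi$ itself) via the global Lipschitz continuity of $\psi$.

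Concretely, I would first fix $\bx \in \real^{n}$ and $\xi \in \partial h\left(\bx\right)$. Since $\psi$ is convex and finite-valued and $\phi$ is differentiable on $D \supseteq \psi\left(\real^{n}\right)$ and non-decreasing, the chain rule for sub-differentials gives $\xi = \phi'\left(\psi\left(\bx\right)\right)\zeta$ for some $\zeta \in \partial\psi\left(\bx\right)$, with $\phi'\left(\psi\left(\bx\right)\right) \geq 0$ because $\phi$ is non-decreasing; hence $\norm{\xi} = \phi'\left(\psi\left(\bx\right)\right)\norm{\zeta}$. Next, I would bound $\norm{\zeta}$: since $\psi$ is convex and globally $L$-Lipschitz, Proposition~\ref{P:ExQuasiL}(i) shows that every sub-gradient of $\psi$ is quasi Lipschitz with constants $\left(L , 0\right)$, so $\norm{\zeta} \leq \max\left\{L , 0\right\} = L$. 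Then I would bound the scalar: since $\phi'$ is quasi Lipschitz with constants $\left(d_{1} , d_{2}\right)$, we have $\phi'\left(\psi\left(\bx\right)\right) \leq \max\left\{d_{1} , d_{2}\left|\psi\left(\bx\right)\right|\right\}$, and the $L$-Lipschitz property of $\psi$ gives $\left|\psi\left(\bx\right)\right| \leq \left|\psi\left(\bx\right) - \psi\left(\bo\right)\right| + \left|\psi\left(\bo\right)\right| \leq L\norm{\bx} + \left|\psi\left(\bo\right)\right|$; invoking $\max\left\{a , b\right\} \leq a + b$ yields $\phi'\left(\psi\left(\bx\right)\right) \leq d_{1} + d_{2}\left|\psi\left(\bo\right)\right| + d_{2}L\norm{\bx}$. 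Multiplying the two estimates gives $\norm{\xi} \leq \left(Ld_{1} + Ld_{2}\left|\psi\left(\bo\right)\right|\right) + L^{2}d_{2}\norm{\bx}$, and a final application of $a + b \leq \max\left\{2a , 2b\right\}$ (for $a,b\ge 0$) produces exactly $\norm{\xi} \leq \max\left\{2\left(Ld_{1} + Ld_{2}\left|\psi\left(\bo\right)\right|\right) , 2L^{2}d_{2}\norm{\bx}\right\}$, which is the claim.

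The step I expect to be the main obstacle is the sub-differential chain rule: one has to justify carefully that $\partial\left(\phi\circ\psi\right)\left(\bx\right) = \phi'\left(\psi\left(\bx\right)\right)\partial\psi\left(\bx\right)$ in the stated generality --- in particular using that $\phi$ is differentiable throughout $D$ and that $\phi'\geq 0$, which keeps the composition convex and the formula valid --- though this is classical in convex analysis. I would also remark that one could alternatively regard $\psi$ itself as a quasi Lipschitz mapping via Proposition~\ref{P:ExQuasiL}(ii) and then combine the composition rule of Proposition~\ref{P:BasicQL}(ii) with the scalar-multiplication rule of Proposition~\ref{P:BasicQL}(i); that route, however, applies $\max\{a,b\}\le a+b$ and $a+b\le\max\{2a,2b\}$ more times than necessary and therefore yields strictly larger constants than those in the statement, so the direct estimate above is the one I would carry out.
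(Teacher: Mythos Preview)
Your proposal is correct and follows essentially the same route as the paper: both use the chain rule $\partial(\phi\circ\psi)(\bx)=\phi'(\psi(\bx))\,\partial\psi(\bx)$, bound the sub-gradient of $\psi$ by $L$, bound $|\phi'(\psi(\bx))|$ via the quasi Lipschitz property together with $|\psi(\bx)|\le L\norm{\bx}+|\psi(\bo)|$, and finish with the inequalities $\max\{a,b\}\le a+b$ and $a+b\le\max\{2a,2b\}$. The paper likewise flags the chain rule as the key external ingredient (citing \cite[p.~287]{B2017-B}), so your identification of the main obstacle is on point.
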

	\begin{proof}
    		 Let $\bx \in \real^{n}$. From the chain rule for sub-differentials (see \cite[page 287]{B2017-B}), we have that $\partial h\left(\bx\right) = \phi'\left(\psi\left(\bx\right)\right)\partial \psi\left(\bx\right)$. Thus, by denoting $\xi\left(\bx\right) := \phi'\left(\psi\left(\bx\right)\right)\eta\left(\bx\right)$ for some $\eta\left(\bx\right) \in \partial \psi\left(\bx\right)$, we obtain that $\xi\left(\bx\right) \in \partial h\left(\bx\right)$. Now, using the facts that $\phi'$ is quasi Lipschitz and $\psi$ is globally $L$-Lipschitz continuous, we obtain
    		\begin{align*}
        		\norm{\xi\left(\bx\right)} & = \norm{\phi'\left(\psi\left(\bx\right)\right)\eta\left(\bx\right)} \\
        		& = \left|\phi'\left(\psi\left(\bx\right)\right)\right| \cdot \norm{\eta\left(\bx\right)} \\
        		& \leq \max \left\{d_{1} , d_{2}|\psi\left(\bx\right)|\right\} \cdot L \\
        		& \leq \max\left\{Ld_{1} , Ld_{2}\left(|\psi\left(\bx\right) - \psi\left(\bo\right)| + |\psi\left(\bo\right)|\right)\right\} \\
        		& \leq \max\left\{Ld_{1}, L^{2}d_{2}\norm{\bx} + Ld_{2}|\psi\left(\bo\right)|\right\} \\
        		& \leq Ld_{1} + Ld_{2}|\psi\left(\bo\right)| + L^{2}d_{2}\norm{\bx} \\
        		& \leq \max\left\{2\left(Ld_{1} + Ld_{2}|\psi\left(\bo\right)|\right), 2L^{2}d_{2}\norm{\bx}\right\},
    		\end{align*}
    		where the first inequality follows from the fact that $\norm{\eta\left(\bx\right)} \leq L$ due to the fact that $\psi$ is globally $L$-Lipschitz continuous (see \cite[Theorem 3.61, p. 71]{B2017-B}). This implies the desired result. 
	\end{proof}
	For example, the squared $\ell_{1}$-norm, \ie $h\left(\bx\right) = \norm{\bx}_{1}^{2}$, which is not differentiable and not globally Lipschitz continuous, is quasi Lipschitz with constants $\left(0 , 4n\right)$ due to the lemma above, since $\psi \equiv \norm{\cdot}_{1}$ is globally Lipschitz continuous with a constant $L = \sqrt{n}$ and $\phi\left(t\right) = t^{2}$ is quasi Lipschitz with constants $\left(0 , 2\right)$. Many more examples can be generated using the properties we have proven in the results above.
\medskip

	A final obvious remark is about a boundedness property of quasi Lipschitz mappings.
	\begin{remark} \label{R:BounQL}
		Let $F : \real^{n} \rightarrow \real^{n}$ be a quasi Lipschitz mapping. Then, $F$ is bounded on any bounded subset of $\real^{n}$.	
	\end{remark}
	
\section{The Bi-Sub-Gradient Algorithm} \label{Sec:Bi-SG}
	In this paper, we are focusing on bi-level optimization problems which are formulated above using the problems couple \eqref{Prob:IP} and \eqref{Prob:OP}. We first discuss the inner level problem which is given by
	\begin{equation} \tag{IP}
		\min_{\bx \in \real^{n}} \left\{ \varphi\left(\bx\right) := f\left(\bx\right) + g\left(\bx\right) \right\}.
	\end{equation}
	Problem \eqref{Prob:IP}, which consists of minimizing the sum of a smooth function $f$ and a possibly nonsmooth function $g$, is one of the most studied models in modern optimization with a huge body of literature (see, for instance, \cite{BT10} and the references therein). We have adopted this structure of the inner problem since it is a very flexible optimization model that was also studied in the context of bi-level optimization (see, for instance, \cite{BS2014,SS2017} and references therein). It should be noted that, even though the results in \cite{BS2014} could be generalized to this model, the authors dealt in their paper with the particular case where $g$ is an indicator function of a certain closed and convex set. Following these previous works, we make the following standing assumption on the functions of the inner level problem \eqref{Prob:IP} (see the recent book \cite{B2017-B} for more details and information on this model as a stand alone optimization problem).
	\begin{assumption} \label{A:Composite}
		\begin{itemize}
			\item[$\rm{(i)}$] $f : \real^{n} \rightarrow \real$ is convex and continuously differentiable such that its gradient is Lipschitz with constant $L_{f}$, that is,
				\begin{equation*}
					\norm{\nabla f\left(\bx\right) - \nabla f\left(\by\right)} \leq L_{f}\norm{\bx - \by}, \quad \forall \,\, \bx , \by \in \real^{n}.
				\end{equation*}
			\item[$\rm{(ii)}$] $g : \real^{n} \rightarrow \left(-\infty , \infty\right]$ is proper, lower semicontinuous and convex.
			\item[$\rm{(iii)}$] The set $X^{\ast}$ of all optimal solutions of problem \eqref{Prob:IP} is nonempty, that is, $X^{\ast} \neq \emptyset$.
		\end{itemize}
	\end{assumption}
	The basic algorithm for solving problem \eqref{Prob:IP}, as a stand alone optimization problem, is the so called \emph{Proximal Gradient} (PG), which at a given point $\bx \in \real^{n}$ generates the next point via the following rule
	\begin{equation} \label{A:PG-Step}
		\bx^{+} = \prox_{tg}\left(\bx - t\nabla f\left(\bx\right)\right),
	\end{equation}
	for some step-size $t > 0$. The main operation of this algorithm is the computation of the \emph{Moreau proximal mapping} of a proper, lower semicontinuous and convex function $h : \real^{n} \rightarrow \left(-\infty , \infty\right]$, which is denoted and defined by
	\begin{equation}
		\prox_{h}\left(\bx\right) := \argmin_{\bu \in \real^{n}} \left\{ h\left(\bu\right) + \frac{1}{2}\norm{\bu - \bx}^{2} \right\}.
	\end{equation}
	It will be helpful to recall the \emph{prox-grad mapping} that is defined by
	\begin{equation} \label{ProxGradMap}
		T_{t}^{f , g}\left(\bx\right) := \prox_{tg}\left(\bx - t\nabla f\left(\bx\right)\right).
	\end{equation}
	Another mapping that is very connected to the prox-grad mapping and will be essential in our paper is the gradient mapping that is defined as follows
	\begin{equation} \label{GradMap}
     	G_{t}^{f , g}\left(\bx\right) := \frac{1}{t}\left(\bx - T_{t}^{f , g}\left(\bx\right)\right).
	\end{equation}
	For simplicity, in both mappings, we will omit the superscripts $f , g$ and simply write $T_{t}$ and $G_{t}$ instead of $T_{t}^{f , g}$ and $G_{t}^{f , g}$, respectively, whenever no confusion arises.
\medskip

	As mentioned above, the papers \cite{BS2014,SS2017} motivated the development of the Bi-Sub-Gradient (Bi-SG) algorithm, which as the two previous algorithms tackle the inner problem using a proximal gradient step (see \eqref{A:PG-Step}). However, the main difference between the algorithms of \cite{BS2014,SS2017} to the Bi-SG that we propose, is in the way that the outer optimization is handled. Therefore, we would like first to describe how \cite{BS2014,SS2017} proposed to tackle the outer problem. To this end, we recall the outer optimization problem
	\begin{equation} \tag{OP}
		\min_{\bx \in X^{\ast}} \omega\left(\bx\right).
	\end{equation}	
	In \cite{BS2014}, the authors proposed the Minimal Norm Gradient (MNG) algorithm, which is based on cutting planes idea, and designed to tackle bi-level optimization problems with smooth and strongly convex outer objective functions $\omega\left(\cdot\right)$. At each iteration, two half-spaces are constructed, and the outer objective function $\omega\left(\cdot\right)$ is minimized over their intersection. The authors propose an algorithm, which at a given point $\bx \in \real^{n}$, uses the prox-grad output vector $T_{t}\left(\bx\right)$ (with a certain step-size) to build a half-space $W_{\bx}$. The second half-space $Q_{\bx}$ is built using the vector $\nabla \omega\left(\bx\right)$. Therefore, each iteration of the MNG algorithm consists of three computational tasks: (i) computing a prox-grad step of the inner problem, (ii) computing the gradient of the outer objective function, and (iii) solving
	\begin{equation*}
		\min \left\{ \omega\left(\by\right) : \, \by \in W_{\bx} \cap Q_{\bx} \right\}.
	\end{equation*}
	In \cite{SS2017}, the authors proposed the BiG-SAM algorithm, which is designed to tackle bi-level optimization problems for which the outer objective functions $\omega\left(\cdot\right)$ is smooth with a Lipschitz continuous gradient and strongly convex. The algorithm computes the next iteration by taking the convex combination of a proximal gradient step on the inner objective function and a gradient step on the outer objective function. Therefore, the BiG-SAM is cheaper to implement than the MNG, due to the third computational task of MNG (the first two tasks are equivalent) that might requires an additional nested algorithm in tackling it.
\medskip
	
	In this paper, we completely depart from the setting of smooth and strongly convex outer objective functions and propose an algorithm for bi-level optimization problems, which enjoys from theoretical guarantees (as we will prove later) in this setting. Before discussing our algorithm we will record the following mild assumption on the outer objective function that is required in our study. To this end, we recall that $h : \real^{n} \rightarrow \left(-\infty , \infty\right]$ is called a coercive function if $h\left(\bx\right) \rightarrow \infty$ as $\norm{\bx} \rightarrow \infty$.
	\begin{assumption} \label{A:Outer}
		$\omega : \real^{n} \rightarrow \real$ is convex and coercive. 
	\end{assumption}	
	As mentioned above, this study was motivated by \cite{BS2014,SS2017} and similarly to the MNG and BiG-SAM algorithms, we also compute a proximal gradient step on the inner optimization problem. However, regarding the outer problem, since we are dealing with non-smooth functions, we propose two different algorithms according to the structure of the outer objective function $\omega\left(\cdot\right)$. 
\medskip

	{\bf The outer objective function $\omega$ has no special structure.} In this case, we propose to use a sub-gradient step on the outer optimization problem. Moreover, all we need in this case is that at any point there exists a sub-gradient of $\omega\left(\cdot\right)$ that is quasi Lipschitz (as defined in Definition \ref{D:QuasiL}). It should be noted that this choice is relevant, for example, when the function $\omega\left(\cdot\right)$ is globally Lipschitz continuous (due to Proposition \ref{P:ExQuasiL}(i)). Therefore, in this case, the Bi-Sub-Gradient (Bi-SG) algorithm is given by
\vspace{0.2in}

	\hspace{0.1in}\fbox{\parbox{14.5cm}{{\bf Bi-Sub-Gradient - Version I}
		\begin{itemize}[label={}]
       		\item {\bf Input:} $\alpha \in \left(1/2 , 1\right]$, $c \in \left(0 , 1\right]$, and $\eta_{k} = c\left(k + 1\right)^{-\alpha}$  for all $k \in \nn$.
            \item {\bf Initialization:} $\bx^{0} \in \real^{n}$.
            \item {\bf General Step} ($k = 1 , 2 , \ldots$):
				\begin{itemize}
					\item[1.] Pick a step-size $t_{k} = 1/L_{k} > 0$ and compute
						\begin{equation} \label{V1:Inner}
							\by^{k} = T_{t_{k}}^{f , g}\left(\bx^{k}\right) \equiv \prox_{t_{k}g}\left(\bx^{k} - t_{k}\nabla f\left(\bx^{k}\right)\right).
						\end{equation}
					\item[2.] Pick a sub-gradient $\bz^{k} \in \partial \omega\left(\by^{k}\right)$ and compute 
						\begin{equation} \label{V1:Outer}
							\bx^{k + 1} = \by^{k} - \eta_{k}\bz^{k}.
						\end{equation}
        			\end{itemize}
        	\end{itemize}\vspace{-0.15in}}}
\vspace{0.2in}

	{\bf The outer objective function $\omega$ has a composite structure.} In this case, we assume that $\omega \equiv \sigma + \psi$, where $\sigma : \real^{n} \rightarrow \real$ is convex and continuously differentiable such that its gradient is Lipschitz with constant $L_{\sigma}$ and $\psi : \real^{n} \rightarrow \real$ is proper, lower semicontinuous and convex. Hence, in order to exploit this additional structure of $\omega\left(\cdot\right)$, we propose to apply a proximal gradient step on the outer objective function and thus the Bi-Sub-Gradient (Bi-SG) algorithm is given by
\vspace{0.2in}

	\hspace{0.1in}\fbox{\parbox{14.5cm}{{\bf Bi-Sub-Gradient - Version II}
		\begin{itemize}[label={}]
       		\item {\bf Input:} $\alpha \in \left(1/2 , 1\right]$, $c \leq \min\left\{ 1/ L_{\sigma} , 1\right\}$, and $\eta_{k} = c\left(k + 1\right)^{-\alpha}$  for all $k \in \nn$.
            \item {\bf Initialization:} $\bx^{0} \in \real^{n}$.
            \item {\bf General Step} ($k = 1 , 2 , \ldots$):
				\begin{itemize}
					\item[1.] Pick a step-size $t_{k} = 1/L_{k} > 0$ and compute
						\begin{equation} \label{V2:Inner}
							\by^{k} = T_{t_{k}}^{f , g}\left(\bx^{k}\right) \equiv \prox_{t_{k}g}\left(\bx^{k} - t_{k}\nabla f\left(\bx^{k}\right)\right).
						\end{equation}
					\item[2.] Compute 
						\begin{equation} \label{V2:Outer}
							\bx^{k + 1} = T_{\eta_{k}}^{\sigma , \psi}\left(\by^{k}\right) \equiv \prox_{\eta_{k}\psi}\left(\by^{k} - \eta_{k}\nabla \sigma\left(\by^{k}\right)\right).
						\end{equation}
        			\end{itemize}
        	\end{itemize}\vspace{-0.1in}}}
\vspace{0.2in}

	Before presenting the convergence results, we will conclude this part with a discussion on the classical ways to choose the step-size $L_{k}$ in step 1 of the Bi-SG algorithm (relevant for both versions).
	\begin{itemize}
    	\item[$\rm{(i)}$] \textbf{A constant step-size.} $L_{k} = L_{f}$ (see Assumption \ref{A:Composite}(i)).
    	\item[$\rm{(ii)}$] \textbf{A backtracking procedure.} The classical procedure (see, for instance, \cite{B2017-B}) requires two parameters $\gamma > 0$ and $\eta > 1$. Define $L_{-1} = \gamma$. At iteration $k \geq 0$, the choice of $L_{k}$ is done as follows. First, $L_{k}$ is set to be equal to $L_{k - 1}$. Then, while (recall that $t_{k} = 1 / L_{k}$)
    		\begin{equation} \label{BT:1}
    			f\left(T_{t_{k}}^{f , g}\left(\bx_{k}\right)\right) > f\left(\bx_{k}\right) + \act{\nabla f\left(\bx_{k}\right) , T_{t_{k}}^{f , g}\left(\bx_{k}\right) - \bx_{k}} + \frac{t_{k}}{2}\norm{T_{t_{k}}^{f , g}\left(\bx_{k}\right) - \bx_{k}}^{2},
    		\end{equation}
    		we set $L_{k} := \eta L_{k}$. In other words, $L_{k}$ is chosen as $L_{k} = L_{k - 1}\eta^{i_{k}}$, where $i_{k}$ is the smallest non-negative integer for which the condition \eqref{BT:1} is satisfied.
	\end{itemize}
	Now, we recall the following result (see \cite[Remark 10.19, p. 283]{B2017-B} that we will frequently use in the analysis below.
	\begin{proposition} \label{P:BackT}
		For  any $k \in \mathbb{N}$, we have that $\underline{L} \leq L_{k} \leq {\bar L}$, where $\underline{L} = {\bar L} = L_{f}$ for the constant step-size choice, while for the backtracking choice $\underline{L} = \gamma$ and ${\bar L} = \max \left\{ L_{f}\eta , \gamma \right\}$.
	\end{proposition}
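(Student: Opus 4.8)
The plan is to treat the two step-size rules separately: the constant one is immediate, and the backtracking one needs only a short induction together with the descent lemma.

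For the constant choice there is nothing to prove, since $L_k \equiv L_f$ and the claim holds with $\underline{L} = \bar{L} = L_f$. For the backtracking choice, I would first record the lower bound. At iteration $k$ the procedure starts from $L_{k-1}$ and multiplies by $\eta > 1$ a finite number $i_k \geq 0$ of times, so $L_k = L_{k-1}\eta^{i_k} \geq L_{k-1}$; combined with $L_{-1} = \gamma$, an immediate induction shows that $\seq{L}{k}$ is non-decreasing and hence $L_k \geq \gamma = \underline{L}$ for every $k$.

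For the upper bound I would induct on $k$, the base case being $L_{-1} = \gamma \leq \max\{L_f\eta , \gamma\} = \bar{L}$. Assume $L_{k-1} \leq \bar{L}$. If $i_k = 0$, then $L_k = L_{k-1} \leq \bar{L}$ and we are done. If $i_k \geq 1$, then the inequality \eqref{BT:1} must have held for the trial constant $\tilde{L} := L_{k-1}\eta^{i_k-1}$ with step-size $\tilde{t} := 1/\tilde{L}$, for otherwise the loop would have terminated earlier. The key point is that, by the descent lemma applied to $f$ (whose gradient is $L_f$-Lipschitz by Assumption \ref{A:Composite}(i)), the reverse inequality to \eqref{BT:1} holds automatically as soon as $\tilde{L} \geq L_f$; hence \eqref{BT:1} could not have held unless $\tilde{L} < L_f$, and therefore $L_k = \eta\tilde{L} < \eta L_f \leq \bar{L}$. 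This closes the induction, and together with the lower bound it gives $\underline{L} \leq L_k \leq \bar{L}$ for all $k$.

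The only substantive ingredient is this use of the descent lemma: it is simultaneously what guarantees that the inner backtracking loop terminates at all and what prevents the inverse step-size from overshooting $\eta L_f$. Everything else is bookkeeping with the recursion $L_k = L_{k-1}\eta^{i_k}$, so I do not expect any serious obstacle; at most one should be a little careful about how the coefficient appearing in the acceptance test \eqref{BT:1} lines up with the $L_f/2$ coefficient of the descent lemma.
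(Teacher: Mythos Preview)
Your argument is correct and is precisely the standard one; the paper itself does not supply a proof for this proposition but merely cites \cite[Remark 10.19, p.~283]{B2017-B}, where exactly this induction-plus-descent-lemma reasoning appears. Your closing caveat is well placed: as written, condition \eqref{BT:1} carries the coefficient $t_{k}/2$ rather than the intended $L_{k}/2$, which is almost certainly a typo, and with the corrected coefficient your use of the descent lemma goes through verbatim.
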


\section{Convergence Analysis of Bi-SG} \label{Sec:Theory}
	In order to make the analysis clearer and simpler to read, we will first recall some classical results on proximal and prox-grad mappings. The rest of this section will be split into three parts. We begin with some basic properties of the Bi-SG algorithm and a boundedness result of the generated sequence. Then, we will provide a rate of convergence analysis of the Bi-SG algorithm, which will include results on both the inner and the outer objective functions. The section will be concluded with another rate of convergence result for the case where the outer objective function $\omega\left(\cdot\right)$ is additionally strongly convex (but still could be non-smooth).
\medskip

	In order to unify the convergence analysis of both versions, we will use the following unified version of the Bi-SG Algorithm that captures both cases. To this end, we will define a new mapping $\Omega_{k} : \real^{n} \rightarrow \real^{n}$, $k \in \nn$, as follows
	\begin{itemize}
    		\item[$\rm{(V1)}$] {\bf Version I}. $\Omega_{k} \equiv \Omega$, for any $k \in \nn$, where $\Omega: \real^{n} \rightarrow \real^{n}$ is a sub-gradient of $\omega$.
    		\item[$\rm{(V2)}$] {\bf Version II}. $\Omega_{k} \equiv G_{\eta_{k}}^{\sigma , \psi}$, \ie the gradient mapping corresponds to the composite function $\omega \equiv \sigma + \psi$.
	\end{itemize}
\vspace{0.2in}

	\hspace{0.1in}\fbox{\parbox{14.5cm}{{\bf Unified Bi-SG}
		\begin{itemize}[label={}]
       		\item {\bf Input:} $\alpha \in \left(1/2 , 1\right]$, $c \in \left(0 , 1\right]$, and $\eta_{k} = c\left(k + 1\right)^{-\alpha}$  for all $k \in \nn$.
            \item {\bf Initialization:} $\bx^{0} \in \real^{n}$.
            \item {\bf General Step} ($k = 1 , 2 , \ldots$):
				\begin{itemize}
					\item[1.] Pick a step-size $t_{k} = 1/L_{k} > 0$ and compute
						\begin{equation} \label{U:Inner}
							\by^{k} = T_{t_{k}}^{f , g}\left(\bx^{k}\right) \equiv \prox_{t_{k}g}\left(\bx^{k} - t_{k}\nabla f\left(\bx^{k}\right)\right).
						\end{equation}
					\item[2.] Pick a mapping $\Omega_{k}$ and compute 
						\begin{equation} \label{U:Outer} 
							\bx_{k + 1} = \by_{k} - \eta_{k}\Omega_{k}\left(\by_{k}\right).
						\end{equation}
        			\end{itemize}
        	\end{itemize}\vspace{-0.1in}}}
\vspace{0.2in}
	
	It should be noted that according to the version of the chosen Bi-SG Algorithm, the following assumption on the outer objective function $\omega\left(\cdot\right)$ should be made (in addition to Assumption \ref{A:Outer}).
	\begin{assumption} \label{A:Outer2}
		In the case of Version I only (C1) is needed and for Version II only (C2):
		\begin{itemize}
			\item[$\rm{(C1)}$] At each point there exists a subgradient of $\omega$ that is quasi Lipschitz. 
			\item[$\rm{(C2)}$] $\omega \equiv \sigma + \psi$, where $\sigma$ is convex and continuously differentiable such that its gradient is Lipschitz continuous with a constant $L_{\sigma}$, and $\psi : \real^{n} \rightarrow \real$ is proper, lower semicontinuous and convex.
		\end{itemize}
	\end{assumption}	
	It should be noted that Assumption C1, that is relevant only for Version I of the algorithm, is needed only for the sake of proving that the algorithm generates a bounded sequence (see Section \ref{SSec:Bound} below). Therefore, if the boundedness can be guaranteed based on other properties of the outer objective function $\omega$, the quasi Lipschitz assumption can be removed. Moreover, this assumption is not needed to derive the rate of convergence results as can be seen in Section \ref{SSec:Rate} below. 
	
\subsection{Mathematical Toolbox}
	Now we recall some fundamental properties on proximal and prox-grad mappings that will be used later, which are classical and can be found, for example, in \cite[Theorem 10.16, p. 281]{B2017-B}.
	\begin{proposition} \label{P:ProximalInequality}
		Let $s : \real^{n} \rightarrow \real$ be a continuously differentiable and convex function such that its gradient is Lipschitz continuous with a constant $L_{s} > 0$, and let $h : \real^{n} \rightarrow \erl$ be a proper, lower-semicontinuous and convex function. Then, for any $\bx , \by \in \real^{n}$ and $t \in \left(0, 1/L_{s}\right]$, it holds that
		\begin{align}
   				\phi\left(\bx - tG_{t}\left(\bx\right)\right) - \phi\left(y\right) & \leq \frac{1}{2t}\left(\norm{\bx - \by}^{2} - \norm{\left(\bx - t G_{t}\left(\bx\right)\right) - \by}^{2} \right) - \ell_{s}\left(\by , \bx\right) \label{P:ProximalInequality:1}\\
  				& \leq \frac{1}{2t}\left(\norm{\bx - \by}^{2} - \norm{\left(\bx - tG_{t}\left(\bx\right) \right) - \by}^{2}\right) \label{P:ProximalInequality:2},
		\end{align}
		where $\phi = s + h$, $G_{t}$ is the corresponding gradient mapping (see \ref{GradMap}), and $\ell_{s}\left(\by , \bx\right) = s\left(\by\right) - s\left(x\right) - \nabla s\left(x\right)^{T}\left(\by - \bx\right)$. In particular, we have that
		\begin{equation} \label{P:ProximalInequality:3}
			\phi\left(\bx - tG_{t}\left(\bx\right)\right) - \phi\left(\by\right) \leq G_{t}\left(\bx\right)^{T}\left(\bx - \by\right) - \frac{t}{2}\norm{G_{t}\left(\bx\right)}^{2}.
		\end{equation}
	\end{proposition}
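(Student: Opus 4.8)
Here is how I would establish Proposition \ref{P:ProximalInequality}. This is the classical ``fundamental prox-grad inequality'' for the composite function $\phi = s + h$ (it is \cite[Theorem 10.16]{B2017-B}), so one option is simply to cite it; but if a self-contained argument is wanted, the plan is as follows. Write $\bx^{+} := \bx - tG_{t}\left(\bx\right) = T_{t}\left(\bx\right) = \prox_{th}\left(\bx - t\nabla s\left(\bx\right)\right)$, so that $\bx - \bx^{+} = tG_{t}\left(\bx\right)$. The argument combines three elementary ingredients. First, I would write the optimality (Fermat) condition for the proximal minimization that defines $\bx^{+}$: it reads $\frac{1}{t}\left(\bx - t\nabla s\left(\bx\right) - \bx^{+}\right) \in \partial h\left(\bx^{+}\right)$, i.e. $G_{t}\left(\bx\right) - \nabla s\left(\bx\right) \in \partial h\left(\bx^{+}\right)$, and feed it into the subgradient inequality for $h$ at $\bx^{+}$ to obtain, for every $\by$,
\begin{equation*}
h\left(\bx^{+}\right) \leq h\left(\by\right) + \left(G_{t}\left(\bx\right) - \nabla s\left(\bx\right)\right)^{T}\left(\bx^{+} - \by\right).
\end{equation*}
Second, since $\nabla s$ is $L_{s}$-Lipschitz and $t \leq 1/L_{s}$, the descent lemma gives $s\left(\bx^{+}\right) \leq s\left(\bx\right) + \nabla s\left(\bx\right)^{T}\left(\bx^{+} - \bx\right) + \frac{1}{2t}\norm{\bx^{+} - \bx}^{2}$. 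Third, convexity of $s$ gives the exact identity $s\left(\bx\right) = s\left(\by\right) - \nabla s\left(\bx\right)^{T}\left(\by - \bx\right) - \ell_{s}\left(\by , \bx\right)$, where $\ell_{s}\left(\by , \bx\right) \geq 0$.

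Next I would add the three relations, using $\phi\left(\bx^{+}\right) = s\left(\bx^{+}\right) + h\left(\bx^{+}\right)$. The key observation is that the $\nabla s\left(\bx\right)$-terms telescope: the term $\nabla s\left(\bx\right)^{T}\left(\bx^{+} - \bx\right)$ from the descent lemma together with $-\nabla s\left(\bx\right)^{T}\left(\by - \bx\right)$ from convexity combine into $\nabla s\left(\bx\right)^{T}\left(\bx^{+} - \by\right)$, which exactly cancels the $-\nabla s\left(\bx\right)^{T}\left(\bx^{+} - \by\right)$ coming from the $h$-inequality. What remains is $\phi\left(\bx^{+}\right) - \phi\left(\by\right) \leq \frac{1}{2t}\norm{\bx^{+} - \bx}^{2} + G_{t}\left(\bx\right)^{T}\left(\bx^{+} - \by\right) - \ell_{s}\left(\by , \bx\right)$. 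Substituting $\bx^{+} - \bx = -tG_{t}\left(\bx\right)$ turns the first term into $\frac{t}{2}\norm{G_{t}\left(\bx\right)}^{2}$ and splits $G_{t}\left(\bx\right)^{T}\left(\bx^{+} - \by\right) = -t\norm{G_{t}\left(\bx\right)}^{2} + G_{t}\left(\bx\right)^{T}\left(\bx - \by\right)$, yielding $\phi\left(\bx^{+}\right) - \phi\left(\by\right) \leq G_{t}\left(\bx\right)^{T}\left(\bx - \by\right) - \frac{t}{2}\norm{G_{t}\left(\bx\right)}^{2} - \ell_{s}\left(\by , \bx\right)$; dropping the nonnegative term $\ell_{s}\left(\by , \bx\right)$ gives \eqref{P:ProximalInequality:3}.

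Finally, to recover the telescoping forms \eqref{P:ProximalInequality:1}--\eqref{P:ProximalInequality:2}, I would expand $\norm{\left(\bx - tG_{t}\left(\bx\right)\right) - \by}^{2} = \norm{\bx - \by}^{2} - 2tG_{t}\left(\bx\right)^{T}\left(\bx - \by\right) + t^{2}\norm{G_{t}\left(\bx\right)}^{2}$, which shows that the bound obtained above (before discarding $\ell_{s}$) is \emph{exactly} $\frac{1}{2t}\left(\norm{\bx - \by}^{2} - \norm{\left(\bx - tG_{t}\left(\bx\right)\right) - \by}^{2}\right) - \ell_{s}\left(\by , \bx\right)$, i.e. \eqref{P:ProximalInequality:1}; then \eqref{P:ProximalInequality:2} follows since $\ell_{s} \geq 0$ by convexity of $s$. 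I do not expect a genuine obstacle here: the only delicate point is bookkeeping the signs when summing the three inequalities and verifying the $\nabla s\left(\bx\right)$-cancellation — everything else reduces to the descent lemma, the prox optimality condition, and one algebraic identity.
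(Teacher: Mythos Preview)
Your proposal is correct and in fact goes beyond what the paper does: the paper does not prove Proposition~\ref{P:ProximalInequality} at all but simply cites it as \cite[Theorem~10.16, p.~281]{B2017-B}. Your self-contained argument (prox optimality condition $+$ descent lemma $+$ convexity identity, then the algebraic cancellation of the $\nabla s(\bx)$ terms and the quadratic expansion) is exactly the standard proof of that theorem, and all the bookkeeping checks out.
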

	The following technical result will be essential below in deriving the aforementioned results.
	\begin{proposition} \label{P:SubGradProperty}
		Let $h : \real^{n} \rightarrow \erl$ be a proper, lower-semicontinuous and convex function. Then, for any $\bx \in \real^{n}$ and $t > 0$, we have that $\bx - \prox_{th}\left(\bx\right) \in t\partial h \left(\prox_{th}\left(\bx\right)\right)$.
	\end{proposition}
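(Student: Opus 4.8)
The plan is to derive the claim directly from the first-order optimality condition (Fermat's rule) for the strongly convex minimization problem that defines the proximal mapping. Recall that, by definition, $\bu := \prox_{th}\left(\bx\right)$ is the minimizer over $\real^{n}$ of the function $\bv \mapsto th\left(\bv\right) + \frac{1}{2}\norm{\bv - \bx}^{2}$, equivalently of $q\left(\bv\right) := h\left(\bv\right) + \frac{1}{2t}\norm{\bv - \bx}^{2}$. Since $h$ is proper, lower semicontinuous and convex while the quadratic term is finite, continuous and $\frac{1}{t}$-strongly convex, $q$ is proper, lower semicontinuous and strongly convex, so its minimizer exists and is unique; in particular $\bu = \prox_{th}\left(\bx\right)$ is well defined, and this is the only place where the standing hypotheses on $h$ are used.

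The key steps, in order, are: (i) record that $\bu$ minimizes $q$ over $\real^{n}$; (ii) invoke Fermat's rule, $\bo \in \partial q\left(\bu\right)$; (iii) apply the subdifferential sum rule to $q = h + \frac{1}{2t}\norm{\cdot - \bx}^{2}$, obtaining $\partial q\left(\bu\right) = \partial h\left(\bu\right) + \frac{1}{t}\left(\bu - \bx\right)$, where $\frac{1}{t}\left(\bu - \bx\right)$ is the gradient at $\bu$ of the smooth map $\bv \mapsto \frac{1}{2t}\norm{\bv - \bx}^{2}$; (iv) combine (ii) and (iii) to get $\frac{1}{t}\left(\bx - \bu\right) \in \partial h\left(\bu\right)$, and multiply by $t > 0$ to conclude $\bx - \prox_{th}\left(\bx\right) = \bx - \bu \in t\partial h\left(\bu\right) = t\partial h\left(\prox_{th}\left(\bx\right)\right)$, which is exactly the asserted inclusion.

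There is essentially no hard part: the only point deserving a word of care is the use of the sum rule for subdifferentials in step (iii), and this is immediate from the Moreau--Rockafellar theorem since one of the two summands is real-valued and continuous on all of $\real^{n}$, so no constraint qualification is needed. The statement is classical (see, e.g., \cite{B2017-B}), so I would present it as a two-line argument: the definition of the prox, Fermat's rule together with the sum rule, and a rearrangement.
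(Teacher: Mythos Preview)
Your argument is correct and is exactly the standard proof of this classical fact. Note, however, that the paper does not supply a proof of this proposition at all: it is stated as a recalled ``fundamental property'' with a reference to \cite{B2017-B}, so there is nothing to compare against beyond observing that your derivation is the textbook one.
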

	We conclude this part with the following technical result that will be helpful in obtaining the promised rate of convergence results.
	\begin{lemma} \label{L:Sum}
		Let $n_{1} , n_{2} \in \nn$ satisfy that $n_{1} \leq n_{2}$. Then, for any $0 < r < 1$ we have
		\begin{equation*}
    			\sum_{n = n_{1}}^{n_{2}} n^{-r} \leq \frac{n_{2}^{1 - r} - \left(n_{1} - 1\right)^{1 - r}}{1 - r}.
		\end{equation*}
 		The result remains true for $r > 1$ when $n_{1} \geq 2$. 
 	\end{lemma}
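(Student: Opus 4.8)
The plan is to prove the bound by comparing the sum $\sum_{n=n_1}^{n_2} n^{-r}$ with an integral, using the monotonicity of the function $t \mapsto t^{-r}$. For $0 < r < 1$ this function is decreasing on $(0,\infty)$, so for each integer $n \geq 1$ we have $n^{-r} \leq \int_{n-1}^{n} t^{-r}\,dt$, since $t^{-r} \geq n^{-r}$ for all $t \in [n-1,n]$. Summing this inequality over $n = n_1, n_1+1, \ldots, n_2$ yields
\begin{equation*}
	\sum_{n=n_1}^{n_2} n^{-r} \leq \int_{n_1 - 1}^{n_2} t^{-r}\,dt = \frac{n_2^{1-r} - (n_1-1)^{1-r}}{1-r},
\end{equation*}
which is exactly the claimed inequality. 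One minor point to address is the case $n_1 = 1$, where the lower limit of integration is $0$; the integral $\int_0^{n_2} t^{-r}\,dt$ is still finite and equals $n_2^{1-r}/(1-r)$ because $r < 1$, and the term $(n_1-1)^{1-r} = 0^{1-r} = 0$, so the formula remains valid (one should interpret $t^{-r}$ near $0$ via the improper but convergent integral).

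For the case $r > 1$ with $n_1 \geq 2$, the same idea applies but one must be careful about the direction of the comparison and the finiteness of the integral. Here $t \mapsto t^{-r}$ is still decreasing, so $n^{-r} \leq \int_{n-1}^{n} t^{-r}\,dt$ continues to hold for each $n \geq 2$; summing over $n = n_1, \ldots, n_2$ gives $\sum_{n=n_1}^{n_2} n^{-r} \leq \int_{n_1-1}^{n_2} t^{-r}\,dt$. The antiderivative of $t^{-r}$ is still $t^{1-r}/(1-r)$ (now $1-r < 0$), and since $n_1 - 1 \geq 1 > 0$ the integral is a proper integral with no singularity, evaluating to $(n_2^{1-r} - (n_1-1)^{1-r})/(1-r)$. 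This is why the restriction $n_1 \geq 2$ is needed: it keeps the lower endpoint $n_1 - 1$ away from the singularity at $0$.

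I do not anticipate a genuine obstacle here; this is a standard integral-comparison estimate. The only thing requiring a little care is bookkeeping around the endpoint $n_1 = 1$ in the first case (handling the improper integral at $0$) and making sure the sign of $1-r$ does not cause confusion when rearranging the evaluated integral in the second case. I would state the decreasing-monotonicity of $t^{-r}$ explicitly, write the per-term bound $n^{-r} \leq \int_{n-1}^n t^{-r}\,dt$, sum it, evaluate the telescoped integral, and note the endpoint conventions — a proof of only a few lines.
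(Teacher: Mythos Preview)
Your proposal is correct and follows essentially the same approach as the paper: use the monotonicity of $t \mapsto t^{-r}$ to bound each term by $\int_{n-1}^{n} t^{-r}\,dt$, sum, and evaluate the resulting integral. You are in fact more careful than the paper about the endpoint issues (the improper integral at $0$ when $n_{1}=1$ and the need for $n_{1}\geq 2$ when $r>1$), which the paper's proof leaves implicit.
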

	\begin{proof} 
		Since $t \rightarrow t^{-r}$ is a monotonically non-increasing function on $\real_{++}$, it follows for any $n \geq 1$ that $n^{-r} \leq \int_{t = n - 1}^{n} t^{-r}dt$. Thus
		\begin{equation*}
        		\sum_{n = n_{1}}^{n_{2}} n^{-r} \leq \int_{t = n_{1} - 1}^{n_{2}} t^{-r}dt = \frac{1}{1 - r}\left[t^{1 - r} \right]^{n_{2} }_{n_{1} - 1} = \frac{n^{1 - r}_{2} - (n_{1} - 1)^{1 - r}}{1 - r},
    		\end{equation*}
		which proves the desired result.
	\end{proof}

\subsection{Boundedness of Bi-SG} \label{SSec:Bound}
	The main goal in this section is to show that both versions of Bi-SG generate a bounded sequence. To this end and in order to simplify the analysis to come, we summarize first some technical basic properties of Bi-SG. Throughout the paper, we denote by $X'$ the set of all optimal solutions of the outer optimization problem \eqref{Prob:OP}.
	\begin{lemma}[Basic properties of Bi-SG] \label{L:Basic}
		Let $\Seq{\bx}{k}$ and $\Seq{\by}{k}$ be sequences generated by the Unified Bi-SG Algorithm. Then,
		\begin{itemize}
    			\item[$\rm{(i)}$] $\norm{\by^{k} - \bx^{\ast}} = \norm{T_{t_{k}}^{f , g}\left(\bx^{k}\right) - \bx^{\ast}} \leq \norm{\bx^{k} - \bx^{\ast}}$ for any $\bx^{\ast} \in X^{\ast}$ and $k \in \nn$.
    			\item[$\rm{(ii)}$] $\norm{\by^{k} - \bx'} \leq \norm{\bx^{k} - \bx'}$ for any $\bx' \in X'$ and $k \in \nn$.
    			\item[$\rm{(iii)}$] If $\sup \left\{ \norm{\bx^{k} - \bx'} : \, k \in \nn \right\} < \infty,$ then $\sup \left\{ \norm{\Omega_{k}\left(\by^{k}\right)} : \, k \in \nn \right\} < \infty$. 
		\end{itemize}
	\end{lemma}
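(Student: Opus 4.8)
The plan is to prove the three items in order, with item (ii) reducing immediately to item (i) and item (iii) built on item (ii) together with Remark~\ref{R:BounQL} and the structure of the gradient mapping.

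For item (i), I would use the descent property of a prox-grad step. Since $\bx^{\ast}\in X^{\ast}$ minimizes $\varphi=f+g$, we have $\varphi(\by^{k})=\varphi(T_{t_{k}}^{f,g}(\bx^{k}))\geq\varphi(\bx^{\ast})$; applying inequality \eqref{P:ProximalInequality:2} of Proposition~\ref{P:ProximalInequality} with $s=f$, $h=g$, $\bx=\bx^{k}$, $\by=\bx^{\ast}$, $t=t_{k}$ (and $\bx^{k}-t_{k}G_{t_{k}}(\bx^{k})=T_{t_{k}}^{f,g}(\bx^{k})=\by^{k}$, the step-size being admissible for this inequality under the constant or backtracking rule, see Proposition~\ref{P:BackT}) gives
\[
0\leq\varphi(\by^{k})-\varphi(\bx^{\ast})\leq\frac{1}{2t_{k}}\left(\norm{\bx^{k}-\bx^{\ast}}^{2}-\norm{\by^{k}-\bx^{\ast}}^{2}\right),
\]
from which $\norm{\by^{k}-\bx^{\ast}}\leq\norm{\bx^{k}-\bx^{\ast}}$ follows at once. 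For item (ii) I would note that every $\bx'\in X'$ is feasible for \eqref{Prob:OP} and hence lies in $X^{\ast}$, so item (i) with $\bx^{\ast}=\bx'$ is exactly the claim.

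For item (iii), assume $\sup_{k}\norm{\bx^{k}-\bx'}<\infty$ for some $\bx'\in X'$; by item (ii), $\norm{\by^{k}-\bx'}\leq\norm{\bx^{k}-\bx'}$, so $\Seq{\by}{k}$ is bounded. In Version~I, $\Omega_{k}\equiv\Omega$ is a quasi Lipschitz selection of $\partial\omega$ (Assumption~\ref{A:Outer2}(C1)), hence bounded on the bounded set $\{\by^{k}:k\in\nn\}$ by Remark~\ref{R:BounQL}, giving $\sup_{k}\norm{\Omega_{k}(\by^{k})}<\infty$. In Version~II, $\Omega_{k}\equiv G_{\eta_{k}}^{\sigma,\psi}$, and I would proceed in two steps. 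First, since $\omega=\sigma+\psi$ is convex and coercive (Assumption~\ref{A:Outer}) it attains its minimum at some $\bar{\bz}$, which is a fixed point of $T_{\eta_{k}}^{\sigma,\psi}$ for all $k$; as $\eta_{k}\leq c\leq 1/L_{\sigma}$, the argument of item (i) applied with $\phi=\omega$, $s=\sigma$, $h=\psi$, $\bx=\by^{k}$, $\by=\bar{\bz}$ yields $\norm{T_{\eta_{k}}^{\sigma,\psi}(\by^{k})-\bar{\bz}}\leq\norm{\by^{k}-\bar{\bz}}$, so $\{T_{\eta_{k}}^{\sigma,\psi}(\by^{k}):k\in\nn\}$ is bounded. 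Second, Proposition~\ref{P:SubGradProperty} applied with $h=\psi$, $t=\eta_{k}$ at the point $\by^{k}-\eta_{k}\nabla\sigma(\by^{k})$, combined with $\by^{k}-T_{\eta_{k}}^{\sigma,\psi}(\by^{k})=\eta_{k}G_{\eta_{k}}^{\sigma,\psi}(\by^{k})$ from \eqref{GradMap} and a division by $\eta_{k}>0$, produces $G_{\eta_{k}}^{\sigma,\psi}(\by^{k})=\nabla\sigma(\by^{k})+\xi^{k}$ with $\xi^{k}\in\partial\psi(T_{\eta_{k}}^{\sigma,\psi}(\by^{k}))$. Since $\psi$ is finite-valued and convex, $\partial\psi$ is bounded on the bounded set of points $T_{\eta_{k}}^{\sigma,\psi}(\by^{k})$, and $\nabla\sigma$ is quasi Lipschitz (Proposition~\ref{P:ExQuasiL}(iii)) and hence bounded on the bounded set $\{\by^{k}:k\in\nn\}$ by Remark~\ref{R:BounQL}; adding the two bounds gives $\sup_{k}\norm{G_{\eta_{k}}^{\sigma,\psi}(\by^{k})}<\infty$.

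The hard part will be Version~II of item (iii): because $\eta_{k}\to 0$, the gradient mapping $G_{\eta_{k}}^{\sigma,\psi}$ carries a division by a vanishing step-size, so any crude bound on $\norm{\by^{k}-T_{\eta_{k}}^{\sigma,\psi}(\by^{k})}$ is worthless. The way around this is the subgradient identity $G_{\eta_{k}}^{\sigma,\psi}(\by^{k})=\nabla\sigma(\by^{k})+\xi^{k}$, $\xi^{k}\in\partial\psi(T_{\eta_{k}}^{\sigma,\psi}(\by^{k}))$, coming from Proposition~\ref{P:SubGradProperty}: it cancels the $1/\eta_{k}$ exactly and reduces the whole matter to boundedness of $\Seq{\by}{k}$ and of $\{T_{\eta_{k}}^{\sigma,\psi}(\by^{k})\}_{k\in\nn}$, on which the Lipschitz gradient of $\sigma$ and the locally bounded subdifferential of $\psi$ behave as needed.
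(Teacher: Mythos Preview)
Your proof is correct and follows essentially the same route as the paper: items (i) and (ii) are identical to the paper's argument, and for item (iii) both you and the paper split $G_{\eta_{k}}^{\sigma,\psi}(\by^{k})$ into $\nabla\sigma(\by^{k})$ plus an element of $\partial\psi$ at the prox-grad output (via Proposition~\ref{P:SubGradProperty}), and then bound each piece using Lipschitzness of $\nabla\sigma$ and local boundedness of $\partial\psi$.

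The only notable difference is a small detour in your Version~II argument: you establish boundedness of $\{T_{\eta_{k}}^{\sigma,\psi}(\by^{k})\}$ by introducing a minimizer $\bar\bz$ of $\omega$ and invoking nonexpansiveness, whereas the paper simply observes that $T_{\eta_{k}}^{\sigma,\psi}(\by^{k})=\bx^{k+1}$ by \eqref{V2:Outer}, so this set is bounded directly from the hypothesis $\sup_{k}\norm{\bx^{k}-\bx'}<\infty$. Your route works (coercivity of $\omega$ guarantees $\bar\bz$ exists, and $\eta_{k}\leq c\leq 1/L_{\sigma}$ makes Proposition~\ref{P:ProximalInequality} applicable), but it is avoidable. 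On the other hand, your explicit identity $G_{\eta_{k}}^{\sigma,\psi}(\by^{k})=\nabla\sigma(\by^{k})+\xi^{k}$ is marginally cleaner than the paper's triangle-inequality formulation \eqref{L:Basic:2}, though the content is the same.
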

	\begin{proof}
		\begin{itemize}
    			\item[$\rm{(i)}$] Let $\bx \in \real^{n}$ and $\bx^{\ast} \in X^{\ast}$. Applying Proposition \ref{P:ProximalInequality} with $s = f$, $h = g$, $t = t_{k}$, $\bx = \bx^{k}$ and $\by = \bx^{\ast}$, it follows from \eqref{P:ProximalInequality:2} that
				\begin{equation*}
    					\norm{T_{t_{k}}^{f , g}\left(\bx^{k}\right) - \bx^{\ast}}^{2} \leq \norm{\bx^{k} - \bx^{\ast}}^{2} + 2t_{k}\left(\varphi\left(\bx^{\ast}\right) - \varphi\left(T_{t_{k}}^{f , g}\left(\bx^{k}\right)\right)\right) \leq \norm{\bx^{k} - \bx^{\ast}}^{2},
				\end{equation*}
				where the last inequality follows from the fact that $\bx^{\ast}$ is a minimizer of $\varphi$. Thus, from step 1 of Bi-SG (see \eqref{U:Inner}), the desired result follows.
    			\item[$\rm{(ii)}$] For any $k \in \nn$, we apply item (i) with $\bx^{\ast} = \bx' \in X' \subseteq X^{\ast}$ to get that
				\begin{equation*}    
					\norm{\by^{k} - \bx'} = \norm{T_{t_{k}}^{f , g}\left(\bx^{k}\right) - \bx'} \leq \norm{\bx^{k} - \bx'}.
				\end{equation*}    			
			\item[$\rm{(iii)}$] Assume that $\sup \left\{ \norm{\bx_{k} - \bx'} : \, k \in \nn \right\} < \infty$. Therefore, from item (ii) we get that $\Seq{\by}{k}$ is bounded. Now, we split the rest of the proof into two cases according to the choice of $\Omega_{k}$. In case (V1), since $\Omega_{k}\left(\by^{k}\right)$, for any $k \in \nn$, is quasi Lipschitz (see Assumption \ref{A:Outer2}) the result follows from Remark \ref{R:BounQL} due to the fact that $\Seq{\by}{k}$ is bounded. Second, in case (V2), we denote $\bw^{k} := \by^{k} - \eta_{k}\nabla \sigma\left(\by^{k}\right)$. Now, using the definition of the gradient mapping (see \eqref{GradMap}) and \eqref{V2:Outer} we get that
				\begin{equation} \label{L:Basic:1}
					\bx^{k + 1} = T_{\eta_{k}}^{\sigma , \psi}\left(\by^{k}\right) = \prox_{\eta_{k}\psi}\left(\bw^{k}\right).
				\end{equation}
				Hence, from \eqref{U:Outer}, we obtain from the triangle inequality that
				\begin{align}
    					\norm{\Omega_{k}\left(\by^{k}\right)} & = \frac{1}{\eta_{k}} \norm{\bx^{k + 1} - \by^{k}} \nonumber \\
    					& \leq \frac{1}{\eta_{k}}\norm{\bx^{k + 1} - \bw^{k}} + \frac{1}{\eta_{k}}\norm{\bw^{k} - \by^{k}} \nonumber \\
    					& = \frac{1}{\eta_{k}}\norm{\prox_{\eta_{k}\psi}\left(\bw^{k}\right) - \bw^{k}} + \norm{\nabla \sigma\left(\by^{k}\right)}, \label{L:Basic:2}
    				\end{align}
    				where the last equality follows from \eqref{L:Basic:1} and the definition of $\bw^{k}$. Now, we show that each term in \eqref{L:Basic:2} is bounded. First, from Proposition \ref{P:SubGradProperty}, we get that 
				\begin{equation*}
					\frac{1}{\eta_{k}}\left(\bw^{k} - \prox_{\eta_{k}\psi}\left(\bw^{k}\right)\right) \in \partial \psi\left(\prox_{\eta_{k}\psi}\left(\bw^{k}\right)\right) = \partial \psi\left(\bx^{k + 1}\right),
				\end{equation*}
				where the last equality follows again from \eqref{L:Basic:1}. Second, since $\psi$ is with a full domain (see Assumption \ref{A:Outer2} (C2)) it has bounded sub-gradients on bounded subsets (see \cite[Theorem 3.16, p. 42]{B2017-B}). Thus, from the fact that $\Seq{\bx}{k}$ is bounded as assumed, we have that the first term in \eqref{L:Basic:2} is bounded. The boundedness of the second term follows from the facts that $\nabla \sigma$ is Lipschitz continuous (see Assumption \ref{A:Outer2} (C2)) and that $\Seq{\by}{k}$ is bounded as was proved above.
		\end{itemize}\vspace{-0.2in}
	\end{proof}
	We will prove now the boundedness of the sequence $\left\{ \norm{\bx^{k} - \bx'} \right\}_{k \in \nn}$, which will be used also below in proving the rate of convergence results.
	\begin{theorem}[Boundedness result] \label{T:Bounded}
		Let $\Seq{\bx}{k}$ and $\Seq{\by}{k}$ be the two sequences generated by the Bi-SG Algorithm. Then, for any $\bx' \in X'$, there exist positive constants $D_{1}$ and $D_{2}$, such that $\norm{\bx^{k} - \bx'} \leq D_{1}$ and $\norm{\Omega_{k}\left(\by^{k}\right)} \leq D_{2}$ for all $k \geq 0$.
	\end{theorem}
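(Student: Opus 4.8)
The plan is to establish boundedness of $\Seq{\bx}{k}$ by a quasi-Fej\'er-type argument relative to an arbitrary $\bx' \in X'$, and then invoke Lemma \ref{L:Basic}(iii) to get the bound on $\Omega_k(\by^k)$. First I would fix $\bx' \in X'$ and expand, using step 2 of the Unified algorithm \eqref{U:Outer},
\begin{equation*}
	\norm{\bx^{k+1} - \bx'}^{2} = \norm{\by^{k} - \bx'}^{2} - 2\eta_{k}\act{\Omega_{k}(\by^{k}) , \by^{k} - \bx'} + \eta_{k}^{2}\norm{\Omega_{k}(\by^{k})}^{2}.
\end{equation*}
By Lemma \ref{L:Basic}(ii), $\norm{\by^{k} - \bx'} \leq \norm{\bx^{k} - \bx'}$, so it suffices to control the remaining two terms. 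The cross term should be nonnegative up to controllable error: in Version I, $\Omega_k(\by^k) \in \partial\omega(\by^k)$ and $\bx'$ minimizes $\omega$ over $X^\ast$, while $\by^k \in X^\ast$ (it is a prox-grad image, hence... actually $\by^k$ need not be in $X^\ast$ exactly), so I would instead use convexity of $\omega$ to write $\act{\Omega_k(\by^k), \by^k - \bx'} \geq \omega(\by^k) - \omega(\bx')$, and separately argue this is bounded below; in Version II the analogous estimate comes from Proposition \ref{P:ProximalInequality}, inequality \eqref{P:ProximalInequality:3}, applied with $s=\sigma$, $h=\psi$, $t = \eta_k$, $\bx = \by^k$, $\by = \bx'$, giving $\act{\Omega_k(\by^k), \by^k - \bx'} \geq \omega(\bx^{k+1}) - \omega(\bx') + \tfrac{\eta_k}{2}\norm{\Omega_k(\by^k)}^2 \geq \omega(\bx^{k+1}) - \omega(\bx')$.

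The key difficulty is that $\omega(\by^k) - \omega(\bx')$ (or $\omega(\bx^{k+1}) - \omega(\bx')$) can be negative, since $\by^k$ is only approximately feasible for \eqref{Prob:OP}, and a priori we do not yet know $\Seq{\by}{k}$ or $\Seq{\bx}{k}$ is bounded. This is where coercivity of $\omega$ (Assumption \ref{A:Outer}) enters: I would argue by contradiction. Suppose $\norm{\bx^{k} - \bx'}$ is unbounded; combining the displayed recursion with the lower bounds above yields, after summing,
\begin{equation*}
	\norm{\bx^{k+1} - \bx'}^{2} \leq \norm{\bx^{0} - \bx'}^{2} + 2\sum_{j=0}^{k}\eta_{j}\bigl(\omega(\bx') - \omega(\by^{j})\bigr) + \sum_{j=0}^{k}\eta_{j}^{2}\norm{\Omega_{j}(\by^{j})}^{2}.
\end{equation*}
Because $\alpha \in (1/2, 1]$, we have $\sum_j \eta_j^2 < \infty$; and if the sequence were unbounded then along a subsequence $\omega(\by^j) \to \infty$ by coercivity (using $\norm{\by^j - \bx'} \leq \norm{\bx^j - \bx'}$), forcing the middle sum to $-\infty$ and contradicting nonnegativity of the left side — modulo the fact that $\norm{\Omega_j(\by^j)}$ appears on the right. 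To close this loop I would run the argument on a first exit time: let $k^\ast$ be the first index at which $\norm{\bx^k - \bx'}$ exceeds a large threshold $R$; up to time $k^\ast$ the iterates lie in a ball, hence $\Seq{\by}{k}$ does too, and then Assumption \ref{A:Outer2} (quasi Lipschitz in Version I via Remark \ref{R:BounQL}; Lipschitz gradient plus bounded subgradients of $\psi$ in Version II, exactly as in Lemma \ref{L:Basic}(iii)) gives a uniform bound $\norm{\Omega_k(\by^k)} \leq M$ for $k \leq k^\ast$. Plugging this in, the last sum is bounded by $M^2\sum_j \eta_j^2 =: C < \infty$ independently of $R$, while the middle sum is $\leq 0$ once $\omega(\by^{k^\ast-1})$ is large enough, which happens for $R$ large by coercivity; this contradicts $\norm{\bx^{k^\ast} - \bx'}^2 > R^2$ for $R$ chosen larger than $\norm{\bx^0 - \bx'}^2 + C$.

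Having obtained $\sup_k \norm{\bx^k - \bx'} \leq D_1 < \infty$, the bound $\norm{\Omega_k(\by^k)} \leq D_2$ follows immediately from Lemma \ref{L:Basic}(iii). I expect the main obstacle to be making the contradiction argument clean: one must be careful that the threshold $R$, the subgradient bound $M$, and the tail constant $C$ are chosen in the right order ($M$ may depend on $R$, so fix $R$ first, then $M = M(R)$, then check $C$ stays bounded — which works precisely because $C = M^2 \sum_j \eta_j^2$ and we only need finitely many steps, or alternatively because the estimate at the exit time only involves $\eta_{k^\ast}^2 M^2$ which is small). An alternative cleaner route avoiding exit times: first prove directly that $\norm{\bx^{k+1}-\bx'}^2 \le \norm{\bx^k-\bx'}^2 + \eta_k^2 \norm{\Omega_k(\by^k)}^2 + 2\eta_k(\omega(\bx')-\omega(\by^k))_+$ and bound $\norm{\Omega_k(\by^k)}$ in terms of $\norm{\by^k-\bx'} \le \norm{\bx^k - \bx'}$ using the quasi Lipschitz/Lipschitz structure, turning the recursion into $a_{k+1} \le a_k(1 + c_k) + c_k$ with $\sum c_k < \infty$, whence boundedness by a discrete Gr\"onwall argument — but this needs the coercivity term to absorb the growth, so the exit-time contradiction is likely the most transparent; I would present that one.
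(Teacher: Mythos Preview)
Your exit-time contradiction has a genuine gap. At the first exit time $k^{\ast}$ (the first index with $\norm{\bx^{k^{\ast}} - \bx'} > R$), all prior iterates satisfy $\norm{\bx^{j} - \bx'} \leq R$ for $j < k^{\ast}$, hence by Lemma \ref{L:Basic}(ii) also $\norm{\by^{j} - \bx'} \leq R$ for all $j \leq k^{\ast} - 1$. So $\by^{k^{\ast} - 1}$ sits \emph{inside} the ball of radius $R$, and coercivity tells you nothing about $\omega(\by^{k^{\ast}-1})$ being large; the claim ``the middle sum is $\leq 0$ once $\omega(\by^{k^{\ast}-1})$ is large enough, which happens for $R$ large by coercivity'' is simply false. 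Worse, the terms $\omega(\bx') - \omega(\by^{j})$ can be strictly positive for many $j$ (since $\by^{j} \notin X^{\ast}$ in general and $\bx'$ only minimizes $\omega$ over $X^{\ast}$), and they are multiplied by $\eta_{j}$ with $\sum \eta_{j} = \infty$, so the summed recursion does not close. There is also the circularity you flag yourself: $M = M(R)$, hence $C = C(R)$, and you cannot then choose $R > \norm{\bx^{0} - \bx'}^{2} + C(R)$ without further work.

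The missing idea, which the paper uses, is a \emph{per-step} case split rather than an exit-time argument. Coercivity gives a finite radius $L := \sup\{\norm{\bx - \bx'} : \omega(\bx) \leq \omega(\bx')\}$. At each step $k$: if $\norm{\by^{k} - \bx'} \leq R$ (with $R \geq L$ suitably chosen), then $\by^{k}$ is bounded and hence $\norm{\bx^{k+1} - \bx'} \leq C_{1}$ for a fixed constant; if $\norm{\by^{k} - \bx'} > R \geq L$, then by definition of $L$ one has $\omega(\by^{k}) > \omega(\bx')$, so the cross term $-2\eta_{k}\act{\Omega_{k}(\by^{k}), \by^{k} - \bx'} \leq -2\eta_{k}(\omega(\by^{k}) - \omega(\bx')) \leq 0$, and the quasi-Lipschitz bound $\norm{\Omega_{k}(\by^{k})} \leq 2d_{2}\norm{\bx^{k} - \bx'}$ turns the recursion into $\norm{\bx^{k+1} - \bx'}^{2} \leq (1 + 4d_{2}^{2}\eta_{k}^{2})\norm{\bx^{k} - \bx'}^{2}$. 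Combining, $\norm{\bx^{k+1} - \bx'}^{2} \leq \max\{C_{1}^{2}, (1 + 4d_{2}^{2}\eta_{k}^{2})\norm{\bx^{k} - \bx'}^{2}\}$, and induction plus $\sum \eta_{k}^{2} < \infty$ finishes Version I. For Version II the paper's argument is even cleaner than your summed route: Proposition \ref{P:ProximalInequality} (inequality \eqref{P:ProximalInequality:2}) gives directly $\norm{\bx^{k+1} - \bx'}^{2} \leq \norm{\by^{k} - \bx'}^{2} - 2\eta_{k}(\omega(\bx^{k+1}) - \omega(\bx'))$, so either $\omega(\bx^{k+1}) \leq \omega(\bx')$ (bounded sublevel set by coercivity) or $\norm{\bx^{k+1} - \bx'} \leq \norm{\bx^{k} - \bx'}$; no summation is needed. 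Your Gr\"onwall alternative was actually the right instinct---it becomes exactly the paper's argument once you realize that whenever the ``bad'' term $(\omega(\bx') - \omega(\by^{k}))_{+}$ is nonzero, $\by^{k}$ already lies in the bounded sublevel set.
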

	\begin{proof}
    		Set $k \in \nn$ and take $\bx' \in X'$. It should be noted that according to Lemma \ref{L:Basic}(iii), we focus on finding $D_{1} > 0$ such that $\norm{\bx^{k} - \bx'} \leq D_{1}$, because this will immediately implies the existence of $D_{2}$ such that $\norm{\Omega_{k}\left(\by^{k}\right)} \leq D_{2}$. To this end, we split now the proof according to the two choices of the mapping $\Omega_{k}$.
		\begin{itemize}
    			\item[$\rm{(i)}$] In this case, $\Omega_{k}\left(\by^{k}\right) \in \partial \omega\left(\by^{k}\right)$. Since $\partial \omega$ is quasi Lipschitz (see Assumption \ref{A:Outer2} (C1)) there exists $d_{1}, d_{2} > 0$ such that for any $\bx \in \real^{n}$, we have that $\norm{\partial \omega\left(\bx\right)} \leq \max\left\{d_{1}, d_{2}\norm{\bx}\right\}$. We denote $R := \max \left\{ L , d_{1}/(2d_{2}) , \norm{\bx'} \right\}$ where $L := \sup \{ \norm{\bx - \bx'} : \, \omega\left(\bx\right) \leq \omega\left(\bx'\right) \}$ is finite since $\omega$ is coercive (see Assumption \ref{A:Outer}). We consider two cases:    		
    				\begin{itemize}
    					\item[(a)] If $\norm{\by^{k} - \bx'} \leq R$, then from \eqref{U:Outer} we have that $\bx^{k + 1} = \by^{k} - \eta_{k}\Omega_{k}\left(\by^{k}\right)$ and thus
    						\begin{equation*}
    							\norm{\bx^{k + 1} - \bx'} \leq \norm{\by^{k} - \bx'} + \eta_{k}\norm{\Omega_{k}\left(\by^{k}\right)} \leq R + \norm{\Omega_{k}\left(\by^{k}\right)},
    						\end{equation*}
    						where the last inequality follows from the fact that $\eta_{k} \leq 1$. In addition, since $\omega$ is with a full domain (see Assumption \ref{A:Outer}) it has bounded sub-gradients on bounded subsets (see \cite[Theorem 3.16]{B2017-B}) and thus $\norm{\Omega_{k}\left(\by^{k}\right)}$ is bounded since $\Omega_{k}\left(\by^{k}\right) \in \partial \omega\left(\by^{k}\right)$. Therefore, in this case, there exists $C_{1} > 0$ such that $\norm{\bx^{k + 1} - \bx'} \leq C_{1}$.
    					\item[(b)] If $\norm{\by^{k} - \bx'} > R$, then from \eqref{U:Outer} we have that    
    						\begin{align}
         					\norm{\bx^{k + 1} - \bx'}^{2} & = \norm{\by^{k} - \eta_{k}\Omega_{k}\left(\by^{k}\right) - \bx'}^{2} \nonumber \\
         					& = \norm{\by^{k} - \bx'}^{2} - 2\eta_{k}\act{\by^{k} - \bx' , \Omega_{k}\left(\by^{k}\right)} + \eta_{k}^{2}\norm{\Omega_{k}\left(\by^{k}\right)}^{2} \nonumber \\
      						& \leq \norm{\by^{k} - \bx'}^{2} - 2\eta_{k}\left(\omega\left(\by^{k}\right) - \omega\left(\bx'\right)\right) + \eta_{k}^{2}\norm{\Omega_{k}\left(\by^{k}\right)}^{2}, \label{T:Bounded:1}
						\end{align}
      					where the last inequality follows from the sub-gradient inequality of the convex function $\omega\left(\cdot\right)$ and the fact that $\Omega_{k}\left(\by^{k}\right) \in \partial \omega\left(\by^{k}\right)$. Now, since $\norm{\by^{k} - \bx'} > R \geq L$ and by the definition of $L$ we get that $\omega\left(\bx'\right) < \omega\left(\by^{k}\right)$. Indeed, if by contradiction $\omega\left(\by^{k}\right) \leq \omega\left(\bx'\right)$, then, from the definition of $L$, it follows that $L \geq \norm{\by^{k} - \bx'}$. However, using the case assumption that $\norm{\by^{k} - \bx'} > R$ and the definition of $R$ we have that $\norm{\by^{k} - \bx'}> R \geq L$, which contradicts the fact that $L \geq \norm{\by^{k} - \bx'}$. Thus, $\omega\left(\bx'\right) < \omega\left(\by^{k}\right)$, and we have that
    						\begin{align}
         					\norm{\bx^{k + 1} - \bx'}^{2} & \leq \norm{\by^{k} - \bx'}^{2} + \eta_{k}^{2}\norm{\Omega_{k}\left(\by^{k}\right)}^{2} \nonumber \\
        						& \leq \norm{\bx^{k} - \bx'}^{2} + \eta_{k}^{2}\norm{\Omega_{k}\left(\by^{k}\right)}^{2}, \label{T:Bounded:1}
						\end{align}    						
						where the last inequality follows from Lemma \ref{L:Basic}(ii). Since $\Omega_{k}$ is quasi Lipschitz with constants $\left(d_{1} , d_{2}\right)$ by our assumption in this case (see Assumption \ref{A:Outer2} (C1)), we obtain from Definition \ref{D:QuasiL} that
						\begin{align}
     						\norm{\Omega_{k}\left(\by^{k}\right)} & \leq \max \left\{ d_{1} , d_{2}\norm{\by^{k}} \right\} \nonumber \\
     						& \leq \max \left\{ d_{1} , d_{2}\left(\norm{\by^{k} - \bx'} + \norm{\bx'}\right) \right\} \nonumber \\
     						& \leq \max\{d_{1}, 2d_{2}\norm{\by^{k} - \bx'} \} \nonumber \\
     						& \leq 2d_{2}\norm{\by^{k} - \bx'} \nonumber \\
     						& \leq 2d_{2}\norm{\bx^{k} - \bx'}, \label{T:Bounded:2}
						\end{align}
						where the third and fourth inequalities follow from the facts that $\norm{\by^{k} - \bx'} > R \geq \norm{\bx'}$ and $R \geq d_{1}/\left(2d_{2}\right)$, while the last inequality follows from Lemma \ref{L:Basic}(ii). Thus, by combining \eqref{T:Bounded:1} and \eqref{T:Bounded:2}, we obtain 
						\begin{equation*}
    							\norm{\bx^{k + 1} - \bx'}^{2} \leq \left(1 + 4d_{2}^{2}\eta_{k}^{2}\right)\norm{\bx^{k} - \bx'}^{2}.
						\end{equation*}
				\end{itemize}
				Therefore, if we combine the two cases together, we have for all $k \geq 0$ that
				\begin{equation*}
    					\norm{\bx^{k + 1} - \bx'}^{2} \leq \max \left\{ C_{1}^{2} , \left(1 + 4d_{2}^{2}\eta_{k}^{2}\right)\norm{\bx^{k} - \bx'}^{2} \right\},
				\end{equation*}
				and by a simple induction we get that			
				\begin{equation} \label{T:Bounded:3}
    					\norm{\bx^{k} - \bx'}^{2} \leq \prod_{s = 0}^{k - 1} \left(1 + 4d_{2}^{2}\eta_{s}^{2}\right) \max \left\{ C_{1}^{2} , \norm{\bx^{0} - \bx'}^{2} \right\}.
				\end{equation}
				Now, we will show that the product term is bounded from above. First, we have
				\begin{equation*}
					\log\left(\prod_{s = 0}^{k - 1} \left(1 + 4d_{2}^{2}\eta_{s}^{2}\right)\right) = \sum_{s = 0}^{k - 1} \log\left(1 + 4d_{2}^{2}\eta_{s}^{2}\right) \leq \sum_{s = 0}^{k - 1} 4d_{2}^{2}\eta_{s}^{2} \nonumber
				\end{equation*}
				where the inequality follows from the fact that $\log\left(1 + t\right) \leq t$ for any $t > 0$. In addition, since $\eta_{s} = c\left(1 + s\right)^{-\alpha}$ for $\alpha \in \left(1/2 , 1 \right)$ and $c \leq 1$ we get that
				\begin{equation*}
    					\sum_{s = 0}^{k - 1} \eta_{s}^{2} = c^{2}\sum_{s = 0}^{k - 1} \left(s + 1\right)^{-2\alpha} \leq \sum_{s = 1}^{k} s^{-2\alpha} = 1 + \sum_{s = 2}^{k} s^{-2\alpha} \leq 1 + \frac{1^{1 - 2\alpha} - k^{1 - 2\alpha}}{2\alpha - 1} \leq \frac{2\alpha}{2\alpha - 1},					
				\end{equation*}
				where the second inequality follows from Lemma \ref{L:Sum} with $n_{1} = 2$, $n_{2} = k$ and $r = 2\alpha > 1$. Therefore, we immediately deduce from \eqref{T:Bounded:3} that there exists $D_{1} > 0$ such that $\norm{\bx^{k} - \bx'} \leq D_{1}$ for all $k \geq 0$.
    			\item[$\rm{(ii)}$] In this case, from (V2) we have that $\Omega_{k} = G_{\eta_{k}}^{\sigma , \psi}$. We split the proof into two cases. If $\omega\left(\bx^{k + 1}\right) \leq \omega\left(\bx'\right)$, then since $\omega$ is coercive it has bounded level-sets (see \cite[Proposition 11.11]{BC2011-B}) and therefore there exists $C_{1} > 0$ such that $\norm{\bx^{k + 1} - \bx'} \leq C_{1}$. Otherwise, we have that $\omega\left(\bx^{k + 1}\right) > \omega\left(\bx'\right)$. Thus, using Proposition \ref{P:ProximalInequality} with $s = \sigma$, $h = \psi$, $t = \eta_{k}$, $\bx = \by^{k}$ and $\by = \bx'$, we have from \eqref{P:ProximalInequality:2} that
   				\begin{equation*}
      				\norm{\bx^{k + 1} - \bx'}^{2} \leq \norm{\by^{k} - \bx'}^{2} - 2\eta_{k}\left(\omega\left(\bx^{k + 1}\right) - \omega\left(\bx'\right)\right) \leq \norm{\by^{k} - \bx'}^{2} \leq \norm{\bx^{k} - \bx'}^{2}
    				\end{equation*}
				where the last inequality follows from Lemma \ref{L:Basic}(ii). Hence, by combining both cases, we have for all $k \geq 0$ that $\norm{\bx^{k + 1} - \bx'} \leq \max \left\{ C_{1} , \norm{\bx^{k} - \bx'} \right\}$. Now, by using a simple induction, we obtain the desired result.
		\end{itemize}\vspace{-0.35in}
	\end{proof}
	\begin{remark} \label{R:BoundedSub}
		It should be noted that in case (V2), using the arguments mentioned in the proofs of Theorem \ref{T:Bounded} and Lemma \ref{L:Basic}(iii), it is also easy to show that any $\xi^{k} \in \partial \omega\left(\by^{k}\right)$ are bounded for all $k \geq 0$. For simplicity, we will assume that the upper bound is also $D_{2}$, that is, $\norm{\xi^{k}} \leq D_{2}$ for all $k \geq 0$.
	\end{remark}

\subsection{Rate of Convergence of Bi-SG - Convex Case} \label{SSec:Rate}
	Now, we are ready to prove the rate of convergence results of the Bi-SG algorithm in the case where the outer objective function $\omega\left(\cdot\right)$ is convex. The analysis below will be valid for all $\alpha \in \left(1/2 , 1\right]$, however in the case where $\alpha  = 1$ we will only have the inner rate. It should be noted that our results are also valid for the case $\alpha =1/2$, however the boundedness result above is not valid as described in the following remark.
	\begin{remark}
		It should be noted $\alpha > 1/2$ is actually only used in the proof of Theorem 1, meaning in proving the boundedness property. Moreover, it is only needed for the first version of Bi-SG (see the last inequality in the proof of Theorem 1, case (i)). 
	\end{remark}
	Throughout this part we will use the following two constants:
		\begin{itemize}
    			\item[$\rm{(i)}$] $D \equiv D_{1} + D_{2}$, where $D_{1}$ and $D_{2}$ are given in Theorem \ref{T:Bounded}.
    			\item[$\rm{(ii)}$] 
    			\begin{equation*}
    				H \equiv \begin{cases}
    					D^{2}\bar{L} / \left(1 - \alpha\right), & \alpha \in \left(1/2 , 1\right), \\
    					D^{2}\bar{L}, & \alpha = 1, 
				\end{cases}
			\end{equation*}    			
    			where $\bar{L}$ is given in Proposition \ref{P:BackT}. 
		\end{itemize}
\medskip

	We begin with a rate of convergence result in terms of the inner objective function $\varphi\left(\cdot\right)$. To this end, we first prove the following technical result.
	\begin{lemma} \label{L:RateTech}
		Let $\Seq{\bx}{k}$ and $\Seq{\by}{k}$ be the two sequences generated by the Bi-SG Algorithm. For any $\bx' \in X'$ and $k \geq 1$, we have
		\begin{equation*}
			\sum_{j = 1}^{k} \left(\varphi\left(\by^{j}\right) - \varphi\left(\bx'\right)\right) \leq H						\begin{cases}
    					k^{1-\alpha}, & \alpha \in \left(1/2 , 1\right), \\
    					\ln\left(k\right) + 1, & \alpha = 1. 
				\end{cases}
		\end{equation*}
	\end{lemma}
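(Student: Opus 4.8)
The plan is to apply Proposition \ref{P:ProximalInequality} to the inner prox-grad step at iteration $j$, with $s=f$, $h=g$, $t=t_j$, $\bx=\bx^j$, and $\by=\bx'$. Since $\by^j = T_{t_j}^{f,g}(\bx^j) = \bx^j - t_j G_{t_j}(\bx^j)$, inequality \eqref{P:ProximalInequality:2} gives
\begin{equation*}
	\varphi(\by^j) - \varphi(\bx') \leq \frac{1}{2t_j}\left(\norm{\bx^j - \bx'}^2 - \norm{\by^j - \bx'}^2\right) = \frac{L_j}{2}\left(\norm{\bx^j - \bx'}^2 - \norm{\by^j - \bx'}^2\right),
\end{equation*}
using $t_j = 1/L_j$. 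Bounding $L_j \leq \bar L$ (Proposition \ref{P:BackT}), this is $\leq \tfrac{\bar L}{2}\left(\norm{\bx^j - \bx'}^2 - \norm{\by^j - \bx'}^2\right)$. The subtlety is that this is \emph{not} telescoping directly, because the next term in the sum involves $\norm{\bx^{j+1}-\bx'}^2$, not $\norm{\by^j - \bx'}^2$, and these differ by the outer step $\bx^{j+1} = \by^j - \eta_j \Omega_j(\by^j)$.

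To handle this, I would not try to telescope but instead bound each summand crudely. From Lemma \ref{L:Basic}(ii) we have $\norm{\by^j - \bx'} \leq \norm{\bx^j - \bx'} \leq D_1$, so each term $\norm{\bx^j-\bx'}^2 - \norm{\by^j-\bx'}^2$ is nonnegative; but summing those crude bounds would only give $O(k)$, which is too weak. The better route: write the outer step as a perturbation. Since $\bx^{j+1} - \bx' = (\by^j - \bx') - \eta_j \Omega_j(\by^j)$, we get
\begin{equation*}
	\norm{\by^j - \bx'}^2 = \norm{\bx^{j+1} - \bx'}^2 + 2\eta_j \act{\by^j - \bx', \Omega_j(\by^j)} - \eta_j^2 \norm{\Omega_j(\by^j)}^2 \geq \norm{\bx^{j+1} - \bx'}^2 - 2\eta_j \norm{\by^j - \bx'}\,\norm{\Omega_j(\by^j)},
\end{equation*}
and by Theorem \ref{T:Bounded}, $\norm{\by^j - \bx'} \leq D_1$ and $\norm{\Omega_j(\by^j)} \leq D_2$, so $\norm{\by^j-\bx'}^2 \geq \norm{\bx^{j+1}-\bx'}^2 - 2\eta_j D_1 D_2 \geq \norm{\bx^{j+1}-\bx'}^2 - 2\eta_j D^2$ (using $D = D_1 + D_2$ and $2D_1D_2 \leq (D_1+D_2)^2$, actually $2D_1D_2 \le D_1^2+D_2^2 \le D^2$). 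Plugging this back:
\begin{equation*}
	\varphi(\by^j) - \varphi(\bx') \leq \frac{\bar L}{2}\left(\norm{\bx^j - \bx'}^2 - \norm{\bx^{j+1} - \bx'}^2\right) + \bar L \eta_j D^2.
\end{equation*}

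Now summing over $j = 1,\dots,k$: the first part telescopes to $\tfrac{\bar L}{2}(\norm{\bx^1-\bx'}^2 - \norm{\bx^{k+1}-\bx'}^2) \leq \tfrac{\bar L}{2}D_1^2 \leq \tfrac{\bar L}{2}D^2$, and the second part is $\bar L D^2 \sum_{j=1}^k \eta_j = \bar L D^2 c \sum_{j=1}^k (j+1)^{-\alpha} \leq \bar L D^2 \sum_{j=2}^{k+1} j^{-\alpha}$. Applying Lemma \ref{L:Sum} with $r=\alpha < 1$ (for $\alpha\in(1/2,1)$), this is $\leq \bar L D^2 \cdot \tfrac{(k+1)^{1-\alpha}-1}{1-\alpha} \leq \tfrac{\bar L D^2}{1-\alpha}(k+1)^{1-\alpha}$; after absorbing the leftover $\tfrac{\bar L}{2}D^2$ constant and tidying the $(k+1)^{1-\alpha}$ versus $k^{1-\alpha}$ discrepancy (bounding $(k+1)^{1-\alpha} \le 2 k^{1-\alpha}$ for $k\ge 1$, or re-indexing), one obtains a bound of the form $H k^{1-\alpha}$ with $H = D^2\bar L/(1-\alpha)$ (possibly needing a slightly larger absolute constant, which the authors may hide in $H$). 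For $\alpha = 1$, the sum $\sum_{j=1}^k \eta_j \leq \sum_{j=2}^{k+1} j^{-1} \leq 1 + \ln(k+1)$ via the integral comparison (or Lemma \ref{L:Sum} with $r=1$, $n_1 = 2$), yielding the $\ln(k)+1$ bound with $H = D^2\bar L$.

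The main obstacle is recognizing that one should not telescope $\by^j$ directly but instead pass through $\bx^{j+1}$ at the cost of the controlled error term $\bar L\eta_j D^2$; once the boundedness constants $D_1, D_2$ from Theorem \ref{T:Bounded} are in hand, the rest is the estimate $\sum \eta_j$ from Lemma \ref{L:Sum}. A minor bookkeeping point is matching constants exactly to the stated $H$; I would state the bound with whatever absolute constant emerges and note it can be absorbed, or verify the index shifts carefully so that $H$ comes out precisely as defined.
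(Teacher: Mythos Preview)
Your approach is essentially the paper's: apply \eqref{P:ProximalInequality:2} to the inner step, control the mismatch between $\by^{j}$ and $\bx^{j+1}$ via the outer-step expansion, telescope, and bound $\sum \eta_{j}$ by Lemma~\ref{L:Sum}. The only organizational difference is that the paper telescopes in $\norm{\by^{j}-\bx'}^{2}$ (it applies the proposition at $\bx^{j+1}$ and then bounds $\norm{\bx^{j+1}-\bx'}^{2}\leq \norm{\by^{j}-\bx'}^{2}+\eta_{j}D^{2}$), whereas you telescope in $\norm{\bx^{j}-\bx'}^{2}$; these are the same computation reorganized.

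There is one slip to fix. In
\[
\norm{\by^{j}-\bx'}^{2} = \norm{\bx^{j+1}-\bx'}^{2} + 2\eta_{j}\act{\by^{j}-\bx',\Omega_{j}(\by^{j})} - \eta_{j}^{2}\norm{\Omega_{j}(\by^{j})}^{2},
\]
you discard the term $-\eta_{j}^{2}\norm{\Omega_{j}(\by^{j})}^{2}$ when passing to a lower bound, but this term is nonpositive and cannot be dropped in that direction. Keep it and bound it by $-\eta_{j}^{2}D_{2}^{2}\geq -\eta_{j}D_{2}^{2}$ (using $\eta_{j}\leq 1$); together with $2D_{1}D_{2}+D_{2}^{2}\leq D^{2}$ you get $\norm{\by^{j}-\bx'}^{2}\geq \norm{\bx^{j+1}-\bx'}^{2}-\eta_{j}D^{2}$, and hence
\[
\varphi(\by^{j})-\varphi(\bx') \leq \frac{\bar L}{2}\left(\norm{\bx^{j}-\bx'}^{2}-\norm{\bx^{j+1}-\bx'}^{2}\right) + \frac{\bar L D^{2}}{2}\,\eta_{j}.
\]
Summing over $j=1,\dots,k$, the telescoped part is at most $\tfrac{\bar L}{2}D_{1}^{2}\leq \tfrac{H}{2}$, and $\sum_{j=1}^{k}\eta_{j}\leq \sum_{j=1}^{k}j^{-\alpha}\leq k^{1-\alpha}/(1-\alpha)$, giving $\tfrac{H}{2}+\tfrac{H}{2}k^{1-\alpha}\leq Hk^{1-\alpha}$ exactly (and analogously $H(\ln k + 1)$ when $\alpha=1$). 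So once the sign slip is corrected, no constant fudging or $(k+1)^{1-\alpha}$ versus $k^{1-\alpha}$ adjustment is needed; your worry about absorbing constants was an artifact of that dropped term.
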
	
	\begin{proof} 
		First, for any $j \geq 0$, we use Proposition \ref{P:ProximalInequality} with $s = f$, $h = g$, $\bx = \bx^{j + 1}$, $\by = \bx'$ and $t = t_{j} = 1 / L_{j}$ (see \eqref{P:ProximalInequality:2}), to obtain that
    		\begin{equation} \label{L:RateTech:1}
        		\varphi\left(\by^{j + 1}\right) - \varphi\left(\bx'\right) \leq \frac{L_{j}}{2}\left(\norm{\bx^{j + 1} - \bx'}^{2} - \norm{\by^{j + 1} - \bx'}^{2} \right).
        	\end{equation}   
        	On the other hand, from \eqref{U:Outer}, we have that
        	\begin{align}
        		\norm{\bx^{j + 1} - \bx'}^{2} & = \norm{\by^{j} - \eta_{j}\Omega_{j}\left(\by^{j}\right) - \bx'}^{2} \nonumber \\
        		& = \norm{\by^{j} - \bx'}^{2} - 2\eta_{j}\act{\Omega_{j}\left(\by^{j}\right), \by^{j} - \bx'} + \eta_{j}^{2}\norm{\Omega_{j}\left(\by^{j}\right)}^{2} \nonumber \\
        		& \leq \norm{\by^{j} - \bx'}^{2} + 2\eta_{j}D_{1}D_{2} + \eta_{j}D_{2}^{2} \nonumber \\
        		& \leq \norm{\by^{j} - \bx'}^{2} + \eta_{j}D^{2}, \label{L:RateTech:2}
		\end{align}        	     		
		where the first inequality follows from Theorem \ref{T:Bounded} and the fact that $\eta_{j} \leq 1$, the last inequality follows from the fact that $D = D_{1} + D_{2} > 0$. Combining \eqref{L:RateTech:1} with \eqref{L:RateTech:2} and using the fact that $L_{j} \leq \bar{L}$ (see Proposition \ref{P:BackT}), yields
        	\begin{align}
        		\varphi\left(\by^{j + 1}\right) - \varphi\left(\bx'\right) & \leq \frac{L_{j}}{2} \left(\norm{\bx^{j + 1} - \bx'}^{2} - \norm{\by^{j + 1} - \bx'}^{2} \right) \nonumber \\
        		& \leq \frac{\bar{L}}{2}\left(\norm{\bx^{j + 1} - \bx'}^{2} - \norm{\by^{j + 1} - \bx'}^{2} \right) \nonumber \\
        		& \leq \frac{\bar{L}}{2}\left(\norm{\by^{j} - \bx'}^{2} - \norm{\by^{j + 1} - \bx'}^{2} \right) + \frac{\eta_{j}}{2}D^{2}\bar{L}. \label{L:RateTech:3} 
    		\end{align}
		Summing \eqref{L:RateTech:3}, for all $0 \leq j \leq k - 1$, gives that
		\begin{align}
    			\sum_{j = 1}^{k} \left(\varphi\left(\by^{j}\right) - \varphi\left(\bx'\right)\right) & = \sum_{j = 0}^{k - 1} \left(\varphi\left(\by^{j + 1}\right) - \varphi\left(\bx'\right)\right) \nonumber \\
    			& \leq \frac{\bar{L}}{2}\left(\norm{\by^{0} - \bx'}^{2} - \norm{\by^{k} - \bx'}^{2} \right) + \frac{D^{2}\bar{L}}{2}\sum_{j = 0}^{k - 1} \eta_{j} \nonumber \\
    			& \leq \frac{\bar{L}}{2}\norm{\by^{0} - \bx'}^{2} + \frac{D^{2}\bar{L}}{2}\sum_{j = 0}^{k - 1} \eta_{j} \nonumber \\
    			& \leq \frac{H}{2} + \frac{D^{2}\bar{L}}{2}\sum_{j = 0}^{k - 1} \eta_{j}, \label{L:RateTech:4}		
		\end{align}    		
		where the last inequality follows from Lemma \ref{L:Basic}(ii) and Theorem \ref{T:Bounded} since $\norm{\by^{0} - \bx'}^{2} \leq \norm{\bx^{0} - \bx'}^{2} \leq D_{1}^{2} \leq D^{2}$ and the fact that $D^{2}\bar{L} \leq H$ (for all $1/2 < \alpha \leq 1$). We split now the proof into two cases. If $1/2 < \alpha < 1$, using Lemma \ref{L:Sum} with $n_{1} = 1$, $n_{2} = k$ and $r = \alpha$, yields
		\begin{equation} \label{L:RateTech:5}
    			\sum_{j = 0}^{k - 1} \eta_{j} = c\sum_{j = 0}^{k - 1} \left(j + 1\right)^{-\alpha} \leq \sum_{j = 1}^{k} j^{-\alpha} \leq \frac{k^{1 - \alpha} - 0^{1 - \alpha}}{1 - \alpha} = \frac{k^{1 - \alpha}}{1 - \alpha},
		\end{equation}
		where the first inequality follows from the fact that $c \leq 1$. By combining \eqref{L:RateTech:4} and \eqref{L:RateTech:5} we obtain that (recall that in this case $H = D^{2}\bar{L}/\left(1 - \alpha\right)$)
		\begin{equation*}
			\sum_{j = 1}^{k} \left(\varphi\left(\by^{j}\right) - \varphi\left(\bx'\right)\right) \leq \frac{1}{2}H + \frac{1}{2}Hk^{1 - \alpha} \leq Hk^{1 - \alpha},
		\end{equation*}
		where the last inequality follows from the fact that $k^{1 - \alpha} \geq 1$ since $\alpha < 1$ and $k \geq 1$. When $\alpha = 1$, we use classical properties of the harmonic series, to obtain that
		\begin{equation*}
    			\sum_{j = 0}^{k - 1} \eta_{j} = c\sum_{j = 0}^{k - 1} \left(j + 1\right)^{-1} \leq \sum_{j = 1}^{k} j^{-1} \leq \ln\left(k\right) + 1,
		\end{equation*}
		which combined with \eqref{L:RateTech:4}, proves the desired result.
	\end{proof}
	Now, we are ready to prove a rate of convergence result for the inner problem (see \eqref{Prob:IP}). This result will be useful in proving the main result of this section, which is the rate of convergence of Bi-SG in terms of both the inner and the outer objective functions.
	\begin{theorem}[An inner rate of convergence of Bi-SG] \label{T:InnerRate}
		Let $\Seq{\bx}{k}$ and $\Seq{\by}{k}$ be the two sequences generated by the Bi-SG Algorithm. For any $\bx' \in X'$ and $k \geq 1$, we have that
		\begin{equation} \label{T:InnerRate:0}
			\varphi\left(\by^{k}\right) - \varphi\left(\bx'\right) \leq 2H\begin{cases}
    					1/k^{\alpha}, & \alpha \in \left(1/2 , 1\right), \\
    					\left(\ln\left(k\right) + 1\right)/k, & \alpha = 1. 
				\end{cases}
		\end{equation}
	\end{theorem}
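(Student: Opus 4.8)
The plan is to upgrade the \emph{averaged} estimate of Lemma~\ref{L:RateTech} into a \emph{last-iterate} estimate, by combining it with an almost-monotonicity property of the sequence $\{\varphi(\by^{j})-\varphi(\bx')\}_{j}$ and a localized re-summation of inequality~\eqref{L:RateTech:3}, glued together at a carefully chosen index. For the almost-monotonicity I apply Proposition~\ref{P:ProximalInequality}, inequality~\eqref{P:ProximalInequality:2}, with $s=f$, $h=g$, $\bx=\bx^{j+1}$, $t=t_{j+1}$ and --- this is the only change from the proof of Lemma~\ref{L:RateTech} --- with $\by=\by^{j}$ instead of $\by=\bx'$. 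Since $\by^{j+1}=T_{t_{j+1}}^{f,g}(\bx^{j+1})$ and, by step~2 of the algorithm, $\bx^{j+1}-\by^{j}=-\eta_{j}\Omega_{j}(\by^{j})$, this gives
\[
\varphi(\by^{j+1})-\varphi(\by^{j})\le\frac{1}{2t_{j+1}}\norm{\bx^{j+1}-\by^{j}}^{2}=\frac{L_{j+1}}{2}\eta_{j}^{2}\norm{\Omega_{j}(\by^{j})}^{2}\le\frac{\bar{L}D^{2}}{2}\eta_{j}^{2},
\]
where I used $L_{j+1}\le\bar{L}$ (Proposition~\ref{P:BackT}) and $\norm{\Omega_{j}(\by^{j})}\le D_{2}\le D$ (Theorem~\ref{T:Bounded}). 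Telescoping, for every $j_{0}\le k$ we get the drift bound $\varphi(\by^{k})-\varphi(\bx')\le\varphi(\by^{j_{0}})-\varphi(\bx')+\tfrac{\bar{L}D^{2}}{2}\sum_{i=j_{0}}^{k-1}\eta_{i}^{2}$.

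Next I pick a window $\{m,\dots,k\}$ and re-sum \eqref{L:RateTech:3} only over $j=m-1,\dots,k-1$; bounding $\norm{\by^{m-1}-\bx'}^{2}\le\norm{\bx^{m-1}-\bx'}^{2}\le D_{1}^{2}\le D^{2}$ via Lemma~\ref{L:Basic}(ii) and Theorem~\ref{T:Bounded} yields $\sum_{j=m}^{k}(\varphi(\by^{j})-\varphi(\bx'))\le\tfrac{\bar{L}D^{2}}{2}\bigl(1+\sum_{j=m-1}^{k-1}\eta_{j}\bigr)$, so some $j_{0}\in\{m,\dots,k\}$ satisfies $\varphi(\by^{j_{0}})-\varphi(\bx')\le\tfrac{\bar{L}D^{2}}{2(k-m+1)}\bigl(1+\sum_{j=m-1}^{k-1}\eta_{j}\bigr)$. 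Plugging this $j_{0}$ into the drift bound and using, for $\alpha\in(1/2,1)$, that $\eta_{j}$ is non-increasing (so $\sum_{j=m-1}^{k-1}\eta_{j}\le(k-m+1)m^{-\alpha}$ and $\sum_{i=m}^{k-1}\eta_{i}^{2}\le(k-m)m^{-2\alpha}$) turns everything into
\[
\varphi(\by^{k})-\varphi(\bx')\le\frac{\bar{L}D^{2}}{2(k-m+1)}+\frac{\bar{L}D^{2}}{2}m^{-\alpha}+\frac{\bar{L}D^{2}(k-m)}{2}m^{-2\alpha}.
\]
Taking the window length $k-m+1$ of order $k^{\alpha}$ (so $m$ is still of order $k$) makes all three terms $O(k^{-\alpha})$; a careful accounting of the constants, together with the identity $\bar{L}D^{2}=(1-\alpha)H$, yields the stated $2H/k^{\alpha}$. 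For $\alpha=1$ the same scheme with a window of length of order $k/(\ln k+1)$ and the harmonic estimate $\sum_{j=m-1}^{k-1}\eta_{j}\le 1+\ln(k/m)$ produces the $2H(\ln k+1)/k$ bound.

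The main obstacle is that $\{\varphi(\by^{j})\}$ is genuinely \emph{not} monotone --- the outer (sub)gradient step typically increases the inner objective --- so one cannot just divide Lemma~\ref{L:RateTech} by $k$. If one propagated the $\eta_{j}^{2}$-drift from a \emph{globally} best iterate, the error would be the tail $\sum_{i\ge j_{0}}\eta_{i}^{2}$, which is a bounded but non-vanishing constant (the series converges precisely because $\alpha>1/2$), giving no rate; and averaging over all indices to locate such a best iterate introduces an extra lever-arm factor that costs $\Theta(k^{1-2\alpha})$, which dominates $k^{-\alpha}$ for $\alpha<1$. The point of choosing $j_{0}$ inside a short \emph{terminal} window of length $\asymp k^{\alpha}$ is precisely to make the averaged value over that window and the drift accumulated from $j_{0}$ up to $\by^{k}$ of the same, optimal, order $k^{-\alpha}$; calibrating the window length and the constants so the final factor is exactly $2H$ (rather than a larger absolute multiple of $\bar{L}D^{2}$) is the part requiring the most care.
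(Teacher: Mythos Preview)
Your approach is correct in outline and does yield the $O(k^{-\alpha})$ last-iterate rate, but it is genuinely different from the paper's argument. The paper does \emph{not} use the $\eta_{j}^{2}$-drift bound $\varphi(\by^{j+1})-\varphi(\by^{j})\le\tfrac{\bar L D^{2}}{2}\eta_{j}^{2}$ at all. Instead it applies \eqref{P:ProximalInequality:3} twice --- once with $\by=\by^{k}$ and once with $\by=\bx'$ --- and keeps the negative quadratic term $-\tfrac{1}{2L_{k+1}}\norm{G_{k+1}}^{2}$, obtaining
\[
v_{k+1}-v_{k}\le\norm{G_{k+1}}\Bigl(\eta_{k}D_{2}-\tfrac{1}{2\bar L}\norm{G_{k+1}}\Bigr)
\quad\text{and}\quad
v_{k+1}\le D_{1}\norm{G_{k+1}}.
\]
Eliminating $\norm{G_{k+1}}$ gives the implication ``$v_{k+1}>v_{k}\Rightarrow v_{k+1}\le 2D_{1}D_{2}\bar L\,\eta_{k}\le Hk^{-\alpha}$''. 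The averaged bound of Lemma~\ref{L:RateTech} is then used only to locate, by pigeonhole, an index $j^{\ast}\ge k/2$ with $v_{j^{\ast}}\le 2Hr_{k}$; after $j^{\ast}$ one either has monotone decrease (so $v_{k}\le v_{j^{\ast}}$) or a last increase at some $\tilde j\ge j^{\ast}$, whence $v_{k}\le v_{\tilde j+1}\le H\tilde j^{-\alpha}\le 2Hk^{-\alpha}$. This ``increase $\Rightarrow$ already small'' dichotomy replaces your windowed re-summation entirely and produces the constant $2H$ for every $k\ge 1$ with no calibration.

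What each buys: your Shamir--Zhang style window argument is more generic --- it needs only the crude one-step drift $O(\eta_{j}^{2})$ and would survive in settings where the gradient-mapping identity is unavailable --- and, because your leading constants are multiples of $\bar L D^{2}=(1-\alpha)H$, the asymptotic prefactor you obtain is actually \emph{smaller} than $2H$ for $\alpha$ close to $1$. The price is exactly the part you flag as ``requiring the most care'': choosing the window length and checking the bound for all $k\ge 1$ (not just $k$ large enough that $k-k^{\alpha}\ge k/2$) is real work that your sketch does not carry out, whereas the paper's argument is uniform in $k$ from the start.
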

	\begin{proof}
		Fix $k \geq 1$ and for simplicity we denote $G_{k} = G_{L_{k}}^{f , g}\left(\bx^{k}\right)$ and $v_{k} = \varphi\left(\by^{k}\right) - \varphi\left(\bx'\right)$. In addition, using Proposition \ref{P:ProximalInequality} with $s = f$, $h = g$, $t = t_{k + 1} = 1 / L_{k + 1}$, $\bx = \bx^{k + 1}$ and $\by = \by^{k}$ (see \eqref{P:ProximalInequality:3}), we obtain
		\begin{align}
    			v_{k + 1} - v_{k} & = \varphi\left(\by^{k + 1}\right) - \varphi\left(\by^{k}\right) \nonumber \\
    			& \leq \act{G_{k + 1} , \bx^{k + 1} - \by^{k}} - \frac{1}{2L_{k + 1}}\norm{G_{k + 1}}^{2} \nonumber \\
    			& = -\eta_{k}\act{G_{k + 1} , \Omega_{k}\left(\by^{k}\right)} - \frac{1}{2L_{k + 1}}\norm{G_{k + 1}}^{2} \nonumber \\
    			& \leq \norm{G_{k + 1}}\left(\eta_{k}D_{2} - \frac{1}{2\bar{L}}\norm{G_{k + 1}}\right), \label{T:InnerRate:1}
		\end{align}
		where the second equality follows from the the definition of $\bx^{k + 1}$ (see \eqref{U:Outer}) and the last inequality follows from the facts that $L_{k + 1} \leq \bar{L}$ (see Proposition \ref{P:BackT}) and that $\norm{\Omega_{k}\left(\by^{k}\right)} \leq D_{2}$ as was proved in Theorem \ref{T:Bounded}. Furthermore, applying again Proposition \ref{P:ProximalInequality} with $s = f$, $h = g$, $t = t_{k + 1} = 1 / L_{k + 1}$, $\bx = \bx^{k + 1}$ and $\by = \bx'$ (see \eqref{P:ProximalInequality:3}), gives		
		\begin{equation} \label{T:InnerRate:2}
    			v_{k + 1} = \varphi\left(\by^{k + 1}\right) - \varphi\left(\bx'\right) \leq \act{G_{k + 1} , \bx^{k + 1} - \bx'} - \frac{1}{2L_{k + 1}}\norm{G_{k + 1}}^{2} \leq D_{1}\norm{G_{k + 1}},
		\end{equation}
		where the last inequality follows from the Cauchy-Schwartz inequality and the definition of $D_{1}$ (see Theorem \ref{T:Bounded}). Thus, by combining \eqref{T:InnerRate:1} and \eqref{T:InnerRate:2}, we immediately get
		\begin{equation*}
    			v_{k + 1} - v_{k} \leq \norm{G_{k + 1}}\left(\eta_{k}D_{2} - \frac{1}{2\bar{L}D_{1}}v_{k + 1}\right).
		\end{equation*}
		Hence, we get that $v_{k + 1} \geq v_{k}$ can be happening only if 
		\begin{equation} \label{T:InnerRate:3}
    			v_{k + 1} \leq 2D_{1}D_{2}\bar{L}\eta_{k} = 2D_{1}D_{2}\bar{L}c\left(k + 1\right)^{-\alpha} \leq Hk^{-\alpha},
		\end{equation}
		where the equality follows from the definition of $\eta_{k}$ and the second inequality follows from the definition of $H$ (since $2D_{1}D_{2} \leq D^{2}$) and the fact that $c \leq 1$.
\medskip

		Now, from Lemma \ref{L:RateTech}, after dividing both sides by $k$, we get that 
$\left(1/k\right)\sum_{j = 1}^{k} v_{j} \leq Hr_{k}$, where $r_{k} = k^{-\alpha}$ (when $1/2 < \alpha < 1$) and $r_{k} = \left(\ln\left(k\right) + 1\right)/k$ (when $\alpha = 1$). Thus, since $v_{j} \geq 0$ for any $j \in \nn$, we can easily deduce that there exist at least $\lceil k/2 \rceil$ indices $j \in \left\{ 1 , 2 , \ldots , k\right\}$ which satisfy $v_{j} \leq 2Hr_{k}$. Thus, it is obvious that the last such index, which we denote by $j^{\ast}$, satisfies that $j^{\ast} \geq k/2$.
\medskip

		Now, let us split the proof into two cases. First, we assume that there exists an index $j \in \left[j^{\ast} , k - 1\right]$ for which $v_{j + 1} > v_{j}$, and we denote the last such index by $\tilde{j}$. By the definition of $\tilde{j}$, we have that
		\begin{equation}
	    		v_{k} \leq v_{k - 1} \leq \cdots \leq v_{\tilde{j} + 2} \leq v_{\tilde{j} + 1}.
		\end{equation}
		In addition, since $v_{\tilde{j} + 1} > v_{\tilde{j}}$ and $k/2 \leq j^{*} \leq \tilde{j}$, it follows from \eqref{T:InnerRate:3} that
		\begin{equation*}
    			v_{k} \leq v_{\tilde{j} + 1} \leq H\tilde{j}^{-\alpha} \leq 2Hk^{-\alpha} \leq 2Hr_{k},
		\end{equation*}
		where the last inequality follows from the fact that $1 < \ln\left(k\right) + 1$, since $k \geq 1$. 
\medskip
		
		Second, there is no such an index $j$ for which $v_{j + 1} > v_{j}$. Thus, from the definition of $j^{\ast}$, we have that
		\begin{equation*}
    			v_{k} \leq v_{j^{*}} \leq 2Hr_{k},
		\end{equation*}
		which proves the desired result.
	\end{proof}
	The following technical result will be essential in obtaining the convergence rate in terms of the outer objective function (see problem \eqref{Prob:OP}). It should be noted this result is valid when $1/2 < \alpha < 1$.
	\begin{lemma} \label{L:OuterTech}
		Let $\Seq{\bx}{k}$ and $\Seq{\by}{k}$ be the two sequences generated by the Bi-SG Algorithm. For any $\bx' \in X'$ and $k \geq 1$, we have
		\begin{equation*}
			u_{k} \leq \frac{D^{2}}{ck^{1 - \alpha}},
		\end{equation*}
		where $u_{k} = \min \left\{ \act{\Omega_{j}\left(\by^{j}\right), \by^{j} - \bx'} : \, k \leq j \leq 2k \right\}$.	
	\end{lemma}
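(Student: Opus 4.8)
The plan is to re-run the one-step estimate used in the proof of Lemma~\ref{L:RateTech}, but this time keeping --- rather than discarding --- the cross term $\act{\Omega_j(\by^j),\by^j-\bx'}$, since it is precisely the quantity we want to control. Writing $a_j:=\act{\Omega_j(\by^j),\by^j-\bx'}$, expanding $\norm{\bx^{j+1}-\bx'}^2$ through the update \eqref{U:Outer}, using $\norm{\Omega_j(\by^j)}\le D_2$ from Theorem~\ref{T:Bounded}, and finally the non-expansiveness bound $\norm{\by^j-\bx'}\le\norm{\bx^j-\bx'}$ of Lemma~\ref{L:Basic}(ii), I would arrive at the telescoping-ready inequality
$$2\eta_j a_j \le \norm{\bx^j-\bx'}^2-\norm{\bx^{j+1}-\bx'}^2+\eta_j^2 D_2^2 .$$

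Next I would sum this over $j=k,k+1,\dots,2k$. The norm differences telescope to $\norm{\bx^k-\bx'}^2-\norm{\bx^{2k+1}-\bx'}^2\le D_1^2$ (Theorem~\ref{T:Bounded} again), and the residual $D_2^2\sum_{j=k}^{2k}\eta_j^2$ is bounded by $D_2^2$ because $\sum_{j=k}^{2k}\eta_j^2=c^2\sum_{j=k}^{2k}(j+1)^{-2\alpha}\le c^2(k+1)(k+1)^{-2\alpha}=c^2(k+1)^{1-2\alpha}\le 1$ (using $c\le1$ and $2\alpha>1$). Hence $2\sum_{j=k}^{2k}\eta_j a_j\le D_1^2+D_2^2\le(D_1+D_2)^2=D^2$. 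On the other hand, since $a_j\ge u_k$ for every $j$ in the window $\{k,\dots,2k\}$ and all weights $\eta_j$ are positive, the left-hand side is at least $2u_k\sum_{j=k}^{2k}\eta_j$; combining,
$$u_k\le\frac{D^2}{2\sum_{j=k}^{2k}\eta_j}.$$

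What remains, and where the exact constant in the statement is actually pinned down, is a lower bound $\sum_{j=k}^{2k}\eta_j\ge\tfrac12 c k^{1-\alpha}$. Comparing the sum with an integral (the lower-bound counterpart of Lemma~\ref{L:Sum}), $\sum_{j=k}^{2k}\eta_j=c\sum_{m=k+1}^{2k+1}m^{-\alpha}\ge c\int_{k+1}^{2k+2}t^{-\alpha}\,dt=\frac{c\bigl((2k+2)^{1-\alpha}-(k+1)^{1-\alpha}\bigr)}{1-\alpha}=\frac{c\,(2^{1-\alpha}-1)}{1-\alpha}(k+1)^{1-\alpha}$. Since $1-\alpha>0$ we may replace $(k+1)^{1-\alpha}$ by $k^{1-\alpha}$, and since $2^{x}\ge1+x\ln2$ for $x>0$ we get $\frac{2^{1-\alpha}-1}{1-\alpha}\ge\ln2>\tfrac12$; therefore $2\sum_{j=k}^{2k}\eta_j>ck^{1-\alpha}$, and the claimed bound $u_k\le D^2/(ck^{1-\alpha})$ follows.

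I expect the obstacles here to be only bookkeeping ones: keeping the cross term and telescoping with the correct endpoint $\bx^{2k+1}$ (not $\bx^{2k}$), and making the constants match exactly --- that is, that the $O(1)$ bound on $\sum\eta_j^2$ together with the $\ge(\ln2)\,k^{1-\alpha}$ lower bound on $\sum\eta_j$ really collapse to $D^2/(ck^{1-\alpha})$ with no spare factors. This is exactly why one needs the sharp integral estimate (and the fact $\ln2>\tfrac12$) rather than the cruder bound counting $(k+1)$ terms each at least $c(2k+1)^{-\alpha}$, which would only give $u_k=O(k^{-(1-\alpha)})$ with a worse constant. The hypothesis $\alpha<1$ is used solely to make $\int t^{-\alpha}$ grow like $k^{1-\alpha}$ instead of $\ln k$; everything else rests on Theorem~\ref{T:Bounded} and Lemma~\ref{L:Basic}(ii).
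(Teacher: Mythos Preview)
Your proof is correct and follows essentially the same route as the paper: expand $\norm{\bx^{j+1}-\bx'}^{2}$ via \eqref{U:Outer}, use Lemma~\ref{L:Basic}(ii) and Theorem~\ref{T:Bounded}, telescope over a window of length $\sim k$, bound $\sum\eta_j^2\le 1$, and finish by lower bounding $\sum\eta_j$.

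One comment is worth making, though. Your last paragraph claims the ``cruder'' counting bound would only give the right order with a worse constant, forcing you to the integral comparison and the inequality $\ln 2>\tfrac12$. This is not so: the paper uses precisely that crude bound and gets the exact constant. Summing over $j=k,\dots,2k-1$ (one fewer term than you), there are $k$ terms each at least $\eta_{2k-1}=c(2k)^{-\alpha}$, so
\[
2\sum_{j=k}^{2k-1}\eta_j \;\ge\; 2k\cdot c(2k)^{-\alpha} \;=\; c(2k)^{1-\alpha} \;\ge\; ck^{1-\alpha},
\]
since $1-\alpha\ge 0$. (The same works over your window $\{k,\dots,2k\}$: $2(k+1)\cdot c(2k+1)^{-\alpha}\ge c(2k+1)^{1-\alpha}\ge ck^{1-\alpha}$.) So the integral estimate is unnecessary. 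A second small difference: the paper first disposes of the case $u_k\le 0$ and then assumes $u_k\ge 0$, because it applies the lower bound on $\sum\eta_j$ \emph{after} multiplying by $u_k$; your ordering --- first deducing $u_k\le D^2/(2\sum\eta_j)$ and only then lower bounding the denominator --- avoids that case split, which is slightly cleaner.
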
	
	\begin{proof} 
		Let $k \geq 1$. If $u_{k} \leq 0$ then the result is obvious. Therefore, from now on we assume that $u_{k} \geq 0$. For all $j \geq 0$, we use the fact that $\bx^{j + 1} = \by^{j} - \eta_{j}\Omega_{j}\left(\by^{j}\right)$ to derive that
    		\begin{align*}
         	\norm{\bx^{j + 1} - \bx'}^{2} & = \norm{\by^{j} - \bx'}^{2} - 2\eta_{j}\act{\Omega_{j} \left(\by^{j}\right) , \by^{j} - \bx'} + \eta_{j}^{2}\norm{\Omega_{j}\left(\by^{j}\right)}^{2} \\
         	& \leq \norm{\bx^{j} - \bx'}^{2} - 2\eta_{j}\act{\Omega_{j} \left(\by^{j}\right) , \by^{j} - \bx'} + \eta_{j}^{2}D_{2}^{2},
         \end{align*}
		where the inequality follows from Lemma \ref{L:Basic}(ii) and Theorem \ref{T:Bounded}. Hence	
		\begin{equation*}
     		2\eta_{j}\act{\Omega_{j}\left(\by^{j}\right) , \by^{j} - \bx'} \leq \norm{\bx^{j} - \bx'}^{2} - \norm{\bx^{j + 1} - \bx'}^{2} + \eta^{2}_{j}D_{2}^{2}.
		\end{equation*}
		Summing it, for all $j = k , k + 1 , \ldots , 2k - 1$, yields
	   	\begin{equation*}
    			2\sum_{j = k}^{2k - 1} \eta_{j}\act{\Omega_{j}\left(\by^{j}\right) , \by^{j} - \bx'} \leq \norm{\bx^{k} - \bx'}^{2} - \norm{\bx^{2k} - \bx'}^{2} + D_{2}^{2}\sum_{j = k}^{2k - 1} \eta_{j}^{2}\leq D_{1}^{2} + D_{2}^{2}\sum_{j = k}^{2k - 1} \eta_{j}^{2},
    		\end{equation*}
    		where the last inequality follows from Theorem \ref{T:Bounded}. Now, since $\eta_{k} > \eta_{j}$ for all $k \leq j \leq 2k - 1$, we obtain that
    		\begin{equation*}
    			\sum_{j = k}^{2k - 1} \eta_{j}^{2} \leq \left(2k - 1 - k + 1\right)\eta_{k}^{2} = k\eta_{k}^{2} \leq 1,
    		\end{equation*}
    		where the last inequality follows from the definition of $\eta_{k}$ since $c \leq 1$ and $2\alpha \geq 1$. This implies that (recall $D = D_{1} + D_{2}$)
	   	\begin{equation} \label{L:OuterTech:1}
    			2\sum_{j = k}^{2k - 1} \eta_{j}\act{\Omega_{j}\left(\by^{j}\right) , \by^{j} - \bx'} \leq D_{1}^{2} + D_{2}^{2} \leq D^{2}.
    		\end{equation}
    		Now, from the definition of $u_{k}$ and since $u_{k} \geq 0$, we get that
		\begin{equation*}
			2\sum_{j = k}^{2k - 1} \eta_{j}\act{\Omega_{j}\left(\by^{j}\right) , \by^{j} - \bx'} \geq 2u_{k}\sum_{j = k}^{2k - 1} \eta_{j} \geq 2u_{k}\left(2k - 1 - k + 1\right)\eta_{2k - 1} = 2u_{k}k\eta_{2k - 1},
		\end{equation*}		
		where the last inequality follows from the fact that $\eta_{j} \geq \eta_{2k - 1}$ for all $k \leq j \leq 2k - 1$. Since $\alpha \leq 1$ we obtain that 2$k\eta_{2k - 1} = c\left(2k\right)^{1 - \alpha} \geq ck^{1 - \alpha}$, and thus combined with \eqref{L:OuterTech:1} yields that
	   	\begin{equation*}
    			cu_{k}k^{1 - \alpha}\leq D^{2},
    		\end{equation*}
		which proves the result. 
	\end{proof}
	Now, we are ready to prove the main result, which is valid when $1/2 < \alpha < 1$.
	\begin{theorem}[Rate of convergence] \label{T:SimuRate}
		Let $\Seq{\bx}{k}$ and $\Seq{\by}{k}$ be the two sequences generated by the Bi-SG Algorithm. For any $\bx' \in X'$ and $k \geq 1$, we have
		\begin{equation*}
    			\varphi\left(\by^{k_{best}}\right) - \varphi\left(\bx'\right) \leq \frac{2H}{k^{\alpha}} \qquad \text{and} \qquad \omega\left(\by^{k_{best}}\right) - \omega\left(\bx'\right) \leq \frac{2D^{2}}{ck^{1 - \alpha}},
		\end{equation*}
		where $k_{best} = \argmin \left\{ \omega\left(\by^{j}\right) : \, k \leq j \leq 2k \right\}$. 
	\end{theorem}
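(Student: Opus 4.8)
The plan is to handle the two estimates separately. The inner estimate is immediate: since $k_{best}$ is chosen from $\left\{k, k+1, \ldots, 2k\right\}$ we have $k_{best} \geq k$, so Theorem~\ref{T:InnerRate} applied at iteration $k_{best}$ (in the regime $1/2 < \alpha < 1$) yields $\varphi\left(\by^{k_{best}}\right) - \varphi\left(\bx'\right) \leq 2H / k_{best}^{\alpha} \leq 2H / k^{\alpha}$.

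For the outer estimate, the first step I would establish is the pointwise inequality
\begin{equation*}
	\omega\left(\by^{j}\right) - \omega\left(\bx'\right) \leq \eta_{j}D_{2}^{2} + \act{\Omega_{j}\left(\by^{j}\right) , \by^{j} - \bx'}, \qquad k \leq j \leq 2k,
\end{equation*}
valid for both versions of the algorithm. In Version~I it follows directly from the subgradient inequality for $\omega$ at $\by^{j}$, since $\Omega_{j}(\by^{j}) \in \partial\omega(\by^{j})$, the term $\eta_{j}D_{2}^{2} \geq 0$ being merely discarded. In Version~II I would split $\omega(\by^{j}) - \omega(\bx') = \left(\omega(\by^{j}) - \omega(\bx^{j+1})\right) + \left(\omega(\bx^{j+1}) - \omega(\bx')\right)$: the second summand is at most $\act{\Omega_{j}(\by^{j}) , \by^{j} - \bx'}$ by inequality~\eqref{P:ProximalInequality:3} of Proposition~\ref{P:ProximalInequality} with $s = \sigma$, $h = \psi$, $t = \eta_{j}$, $\bx = \by^{j}$, $\by = \bx'$ (recalling $\bx^{j+1} = \by^{j} - \eta_{j}\Omega_{j}(\by^{j})$ and that $\eta_{j} \leq c \leq 1/L_{\sigma}$ in Version~II), while for the first summand, convexity of $\omega$, the identity $\by^{j} - \bx^{j+1} = \eta_{j}\Omega_{j}(\by^{j})$, Cauchy--Schwarz, and the bounds $\norm{\xi^{j}} \leq D_{2}$ for a subgradient $\xi^{j} \in \partial\omega(\by^{j})$ (Remark~\ref{R:BoundedSub}) together with $\norm{\Omega_{j}(\by^{j})} \leq D_{2}$ (Theorem~\ref{T:Bounded}) give $\omega(\by^{j}) - \omega(\bx^{j+1}) \leq \act{\xi^{j} , \by^{j} - \bx^{j+1}} \leq \eta_{j}D_{2}^{2}$.

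Next I would choose an index $j^{\ast} \in \left\{k, \ldots, 2k\right\}$ attaining $u_{k} = \min\left\{\act{\Omega_{j}(\by^{j}) , \by^{j} - \bx'} : k \leq j \leq 2k\right\}$. Since $k_{best}$ minimizes $\omega(\by^{j})$ over the same range of indices, $\omega(\by^{k_{best}}) \leq \omega(\by^{j^{\ast}})$, and evaluating the pointwise inequality at $j^{\ast}$ gives $\omega(\by^{k_{best}}) - \omega(\bx') \leq \eta_{j^{\ast}}D_{2}^{2} + u_{k}$. Lemma~\ref{L:OuterTech} bounds $u_{k} \leq D^{2} / (ck^{1-\alpha})$. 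For the remaining term, $\eta_{j^{\ast}} \leq \eta_{k} = c(k+1)^{-\alpha} \leq ck^{-\alpha}$, and since $c \leq 1$, $D_{2} \leq D$, and $k^{1-2\alpha} \leq 1$ (because $\alpha > 1/2$ and $k \geq 1$), one checks that $\eta_{j^{\ast}}D_{2}^{2} \leq ck^{-\alpha}D_{2}^{2} \leq D^{2}/(ck^{1-\alpha})$. Adding the two contributions yields $\omega(\by^{k_{best}}) - \omega(\bx') \leq 2D^{2}/(ck^{1-\alpha})$, as claimed.

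I expect the Version~II reduction to be the main obstacle: Lemma~\ref{L:OuterTech} only controls $\act{\Omega_{j}(\by^{j}) , \by^{j} - \bx'}$, which through \eqref{P:ProximalInequality:3} bounds $\omega(\bx^{j+1}) - \omega(\bx')$ rather than $\omega(\by^{j}) - \omega(\bx')$, whereas the theorem is phrased in terms of the $\by$-iterates. Closing this gap needs the small correction $\omega(\by^{j}) - \omega(\bx^{j+1}) \leq \eta_{j}D_{2}^{2}$, which relies on the boundedness of the subgradients of $\omega$ at the $\by$-iterates (exactly what Remark~\ref{R:BoundedSub} provides) and on the update step being of order $\eta_{j}$; the hypothesis $\alpha > 1/2$ is precisely what makes this $O(k^{-\alpha})$ correction absorbable into the target rate $O(k^{-(1-\alpha)})$, and it also accounts for the factor $2$ (for Version~I the bound is in fact $D^{2}/(ck^{1-\alpha})$).
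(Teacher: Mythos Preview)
Your proof is correct and follows essentially the same approach as the paper. The only difference is organizational: you first establish a unified pointwise inequality $\omega(\by^{j}) - \omega(\bx') \leq \eta_{j}D_{2}^{2} + \act{\Omega_{j}(\by^{j}),\by^{j}-\bx'}$ valid for both versions and then specialize to the minimizing index $j^{\ast}$, whereas the paper treats the two versions separately directly at the minimizing index $\bar{k}$; the ingredients (Theorem~\ref{T:InnerRate}, Lemma~\ref{L:OuterTech}, Proposition~\ref{P:ProximalInequality}\eqref{P:ProximalInequality:3}, Remark~\ref{R:BoundedSub}, and the $\alpha>1/2$ absorption of the $O(k^{-\alpha})$ correction) are identical.
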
		
	\begin{proof}
     	The rate in terms of the inner objective function $\varphi\left(\cdot\right)$ follows, directly, from Theorem \ref{T:InnerRate} using the fact that $k_{best} \geq k$.
\medskip
	
		In order to prove the rate in terms of the outer objective function $\omega\left(\cdot\right)$, we will use Lemma \ref{L:OuterTech} and for simplicity we define $\bar{k} = \argmin \left\{ \act{\Omega_{j}\left(\by^{j}\right), \by^{j} - \bx'} : \, k \leq j \leq 2k \right\}$. We split now the proof according to the choices of the mapping $\Omega_{k}$, $k \in \nn$. In case (V1), where $\Omega_{k}\left(\by^{k}\right) \in \partial \omega\left(\by^{k}\right)$, we apply the sub-gradient inequality on the convex function $\omega\left(\cdot\right)$ to obtain 
		\begin{equation*}
        		\omega\left(\by^{k_{best}}\right) - \omega\left(\bx'\right) \leq \omega\left(\by^{\bar{k}}\right) - \omega\left(\bx'\right) \leq \act{\Omega_{\bar{k}}\left(\by^{\bar{k}}\right) , \by^{\bar{k}} - \bx'} \leq \frac{D^{2}}{ck^{1 - \alpha}}.
	    \end{equation*}
    		where the first inequality follows from the fact that $\omega\left(\by^{k_{best}}\right) \leq \omega \left(\by^{\bar{k}}\right)$ and the last inequality follows from Lemma \ref{L:OuterTech}.
\medskip

		Now, let us consider case (V2). First, by applying Proposition \ref{P:ProximalInequality} with $s = \sigma$, $h = \psi$, $t = \eta_{\bar{k}}$, $\bx = \by^{\bar{k}}$ and $\by = \bx'$ (see \eqref{P:ProximalInequality:3}), together with Lemma \ref{L:OuterTech}, we have that
		\begin{equation}
    			\omega\left(\bx^{\bar{k} + 1}\right) - \omega\left(\bx'\right) \leq \act{\Omega_{\Bar{k}}\left(\by^{\bar{k}}\right) , \by^{\bar{k}} - \bx'} - \frac{\eta_{\bar{k}}}{2}\norm{\Omega_{\bar{k}} \left(\by^{\bar{k}}\right)}^{2} \leq \frac{D^{2}}{ck^{1 - \alpha}}.
		\end{equation}
		In addition, let $\xi \in \partial \omega\left(\by^{\bar{k}}\right)$, and applying the sub-gradient inequality on the convex function $\omega\left(\cdot\right)$ to obtain 
		\begin{equation*}
    			\omega\left(\by^{k_{best}}\right) - \omega\left(\bx^{\bar{k} + 1}\right) \leq \omega\left(\by^{\bar{k}}\right) - \omega\left(\bx^{\bar{k} + 1}\right) \leq \act{\xi , \by^{\bar{k}} - \bx^{\bar{k} + 1}} = \eta_{\bar{k}}\act{\xi , \Omega_{\bar{k}}\left(\by^{\bar{k}}\right)} \leq \eta_{\bar{k}}D^{2},
    		\end{equation*}
    		where the equality follows from \eqref{U:Outer} and the last inequality follows from Theorem \ref{T:Bounded} and Remark \ref{R:BoundedSub} since $D_{2} < D$. Moreover, since $\bar{k} \geq k > k - 1$ we have that $\eta_{\bar{k}} \leq \eta_{k - 1} = ck^{-\alpha} \leq 1/k^{1 - \alpha}$ since $c \leq 1$ and $\alpha > 1/2$. Combining all these facts yields
		\begin{equation*}
    			\omega\left(\by^{k_{best}}\right) - \omega\left(\bx'\right) =  \omega\left(\by^{k_{best}}\right) - \omega\left(\bx^{\bar{k} + 1}\right)  + \omega\left(\bx^{\bar{k} + 1}\right) - \omega\left(\bx'\right) \leq \frac{D^{2}}{ck^{1 - \alpha}} + \frac{D^{2}}{k^{1 - \alpha}},
    		\end{equation*}
    		which implies the desired result since $c \leq 1$.
	\end{proof}
	The result above, which concerns with the convergence rates of the sequence generated by Bi-SG, is limited to the case where $1/2 < \alpha < 1$. For $\alpha = 1$ we can prove the following weaker result concerning the rate of the outer problem \eqref{Prob:OP}.
	\begin{lemma} \label{L:OuterR1}
		Let $\Seq{\bx}{k}$ and $\Seq{\by}{k}$ be the two sequences generated by the Bi-SG Algorithm Version I (\ie $\Omega_{k}$ is set by rule (V1)) and $\alpha = 1$. For any $\bx' \in X'$ and $k \geq 1$, we have
		\begin{equation*}
			\min_{1 \leq j \leq k} \omega\left(\by^{j}\right) - \omega\left(\bx'\right) \leq \frac{D^{2}}{2c\left(\ln\left(k + 1\right) - 1\right)}.
		\end{equation*}
	\end{lemma}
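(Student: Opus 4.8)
The plan is to follow the template of the proof of Lemma \ref{L:OuterTech}, now specialized to the harmonic step-size $\eta_j = c\left(j + 1\right)^{-1}$ that corresponds to $\alpha = 1$, and to exploit the fact that in Version I the mapping $\Omega_j$ supplies a genuine subgradient of $\omega$ at $\by^j$. First I would fix $\bx' \in X'$ and, for each $j \geq 1$, start from the update $\bx^{j + 1} = \by^j - \eta_j \Omega_j\left(\by^j\right)$ (see \eqref{U:Outer}), expand $\norm{\bx^{j + 1} - \bx'}^2$, and then invoke three ingredients already available: the contraction estimate $\norm{\by^j - \bx'} \leq \norm{\bx^j - \bx'}$ from Lemma \ref{L:Basic}(ii), the subgradient inequality $\act{\Omega_j\left(\by^j\right) , \by^j - \bx'} \geq \omega\left(\by^j\right) - \omega\left(\bx'\right)$, and the uniform bound $\norm{\Omega_j\left(\by^j\right)} \leq D_2$ from Theorem \ref{T:Bounded}. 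This produces the one-step inequality
\begin{equation*}
	\norm{\bx^{j + 1} - \bx'}^2 \leq \norm{\bx^j - \bx'}^2 - 2\eta_j\left(\omega\left(\by^j\right) - \omega\left(\bx'\right)\right) + \eta_j^2 D_2^2 .
\end{equation*}

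Next I would sum this for $j = 1 , 2 , \ldots , k$, telescoping the distance terms, discarding $\norm{\bx^{k + 1} - \bx'}^2 \geq 0$, and bounding $\norm{\bx^1 - \bx'}^2 \leq D_1^2$ via Theorem \ref{T:Bounded}. The residual term $\sum_{j = 1}^{k} \eta_j^2 = c^2 \sum_{m = 2}^{k + 1} m^{-2}$ is controlled by Lemma \ref{L:Sum} (with $r = 2$, $n_1 = 2$, $n_2 = k + 1$) together with $c \leq 1$, so that $\sum_{j = 1}^{k} \eta_j^2 \leq 1$; hence, since $D = D_1 + D_2$, the right-hand side collapses to
\begin{equation*}
	2\sum_{j = 1}^{k} \eta_j\left(\omega\left(\by^j\right) - \omega\left(\bx'\right)\right) \leq D_1^2 + D_2^2 \leq D^2 .
\end{equation*}

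To finish, I would set $\delta_k := \min_{1 \leq j \leq k} \omega\left(\by^j\right) - \omega\left(\bx'\right)$; if $\delta_k \leq 0$ there is nothing to prove (the right-hand side of the claim being nonnegative once $k \geq 2$), and otherwise every summand $\omega\left(\by^j\right) - \omega\left(\bx'\right)$ is at least $\delta_k$, so the left-hand side above is at least $2\delta_k \sum_{j = 1}^{k} \eta_j$. It then remains to lower-bound the step-size sum: the integral comparison $\sum_{m = 1}^{k + 1} m^{-1} \geq \int_{1}^{k + 1} t^{-1}\,dt = \ln\left(k + 1\right)$ gives $\sum_{j = 1}^{k} \eta_j = c\left(\sum_{m = 1}^{k + 1} m^{-1} - 1\right) \geq c\left(\ln\left(k + 1\right) - 1\right)$, and combining with the previous display yields $2c\left(\ln\left(k + 1\right) - 1\right)\delta_k \leq D^2$, which is exactly the asserted bound.

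I do not expect any serious obstacle here: this is the classical $O\!\left(1/\sum_j \eta_j\right)$ subgradient-method rate, and passing to a harmonic schedule merely replaces the usual $1/\sqrt{k}$ by $1/\ln k$. The points that need care are purely bookkeeping: summing over $j = 1 , \ldots , k$ so that precisely the tail $\eta_j = c\,m^{-1}$, $m = 2 , \ldots , k + 1$, appears (which is what matches the $\ln\left(k + 1\right) - 1$ in the statement rather than $\ln k$ or the full harmonic number), verifying that the quadratic tail $\sum_j \eta_j^2$ is summable and bounded by $1$ so the numerator is exactly $D^2$, and the trivial sign case-split on $\delta_k$. One should also keep in mind that the bound is informative only once $\ln\left(k + 1\right) > 1$, that is for $k \geq 2$.
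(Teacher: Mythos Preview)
Your proposal is correct and follows essentially the same route as the paper: expand $\norm{\bx^{j+1}-\bx'}^{2}$, use Lemma~\ref{L:Basic}(ii), the subgradient inequality (valid in Version I), and Theorem~\ref{T:Bounded} to get the one-step recursion, telescope over $j=1,\ldots,k$, bound $\sum_j \eta_j^2 \le 1$ via Lemma~\ref{L:Sum}, and finish by lower-bounding $\sum_{j=1}^k \eta_j \ge c(\ln(k+1)-1)$. The only cosmetic difference is that you make the sign case-split on $\delta_k$ explicit, whereas the paper absorbs it into the inequality $\min_j(\cdot)\sum_j\eta_j \le \sum_j \eta_j(\cdot)$ without comment.
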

	\begin{proof} 
		Let $k \geq 1$. For all $j \geq 1$, we use the fact that $\bx^{j + 1} = \by^{j} - \eta_{j}\Omega_{j}\left(\by^{j}\right)$ to derive that
    		\begin{align}
         	\norm{\bx^{j + 1} - \bx'}^{2} & = \norm{\by^{j} - \bx'}^{2} - 2\eta_{j}\act{\Omega_{j} \left(\by^{j}\right) , \by^{j} - \bx'} + \eta_{j}^{2}\norm{\Omega_{j}\left(\by^{j}\right)}^{2} \nonumber\\
         	& \leq \norm{\bx^{j} - \bx'}^{2} - 2\eta_{j}\act{\Omega_{j} \left(\by^{j}\right) , \by^{j} - \bx'} + \eta_{j}^{2}D_{2}^{2}, \label{L:OuterR1:1}
         \end{align}
		where the inequality follows from Theorem \ref{T:Bounded}. Since $\Omega_{k}\left(\by^{k}\right) \in \partial \omega\left(\by^{k}\right)$, we apply the sub-gradient inequality on the convex function $\omega\left(\cdot\right)$ to obtain 
		\begin{equation*}
        		2\eta_{j}\left(\omega\left(\by^{j}\right) - \omega\left(\bx'\right)\right) \leq 2\eta_{j}\act{\Omega_{j}\left(\by^{j}\right) , \by^{j} - \bx'} \leq \norm{\bx^{j} - \bx'}^{2} - \norm{\bx^{j + 1} - \bx'}^{2} + \eta^{2}_{j}D_{2}^{2},
	    \end{equation*}
		where the last inequality follows from \eqref{L:OuterR1:1}. Summing it, for all $j = 1 , 2 , \ldots , k$, yields
	   	\begin{equation*}
    			2\sum_{j = 1}^{k} \eta_{j}\left(\omega\left(\by^{j}\right) - \omega\left(\bx'\right)\right) \leq \norm{\bx^{1} - \bx'}^{2} - \norm{\bx^{k + 1} - \bx'}^{2} + D_{2}^{2}\sum_{j = 1}^{k} \eta_{j}^{2}\leq D_{1}^{2} + D_{2}^{2}\sum_{j = 1}^{k} \eta_{j}^{2},
    		\end{equation*}
    		where the last inequality follows from Theorem \ref{T:Bounded}. Now, from the definition of $\eta_{j}$ and since $c \leq 1$, we obtain that
    		\begin{equation*}
    			\sum_{j = 1}^{k} \eta_{j}^{2} \leq \sum_{j = 1}^{k} \left(j + 1\right)^{-2} = \sum_{j = 2}^{k + 1} j^{-2} \leq \frac{\left(k + 1\right)^{-1} - 1}{-1} \leq 1,
    		\end{equation*}
    		where the second inequality follows from Lemma \ref{L:Sum} with $n_{1} = 2$, $n_{2} = k + 1$ and $r = 2$. This implies that (recall $D = D_{1} + D_{2}$)
	   	\begin{equation} \label{L:OuterR1:2}
    			2\left(\min_{1 \leq j \leq k} \omega\left(\by^{j}\right) - \omega\left(\bx'\right)\right)\sum_{j = 1}^{k} \eta_{j} \leq 2\sum_{j = 1}^{k} \eta_{j}\left(\omega\left(\by^{j}\right) - \omega\left(\bx'\right)\right) \leq D_{1}^{2} + D_{2}^{2} \leq 2^{2}.
    		\end{equation}
		Now, we use classical properties of the harmonic series, to obtain that
		\begin{equation*}
			\sum_{j = 1}^{k} \eta_{j} = c\sum_{j = 1}^{k} \left(j + 1\right)^{-1} = c\left(\sum_{j = 0}^{k} \left(j + 1\right)^{-1} - 1\right) > c\left(\sum_{j = 1}^{k} j^{-1} - 1\right) \geq c\left(\ln\left(k + 1\right) - 1\right),
		\end{equation*}		
		and by combining it with \eqref{L:OuterR1:2}, we obtain the desired result.
	\end{proof}		
	We conclude this section with the following convergence result.
	\begin{proposition}[Convergence of Bi-SG]
		Let $\Seq{\bx}{k}$ and $\Seq{\by}{k}$ be the two sequences generated by the Bi-SG Algorithm. Then
		\begin{itemize}
    			\item[$\rm{(i)}$] Let $1/2 < \alpha \leq 1$. Any limit point of $\Seq{\by}{k}$ belongs to $X^{\ast}$, \ie an optimal solution of the inner problem \eqref{Prob:IP}. In particular, we have
    				\begin{equation*}
    					\lim_{k \rightarrow \infty} \dist\left(\by^{k} , X^{\ast}\right) = 0.
    				\end{equation*}
    			\item[$\rm{(ii)}$] Let $1/2 < \alpha < 1$. Any limit point of $\left\{ \by^{k_{best}} \right\}_{k \in \nn}$ belongs to $X'$, \ie an optimal solution of the outer problem \eqref{Prob:OP}. In particular, we have
    				\begin{equation*}
    					\lim_{k \rightarrow \infty} \dist\left(\by^{k_{best}} , X'\right) = 0.
    				\end{equation*}
		\end{itemize}
	\end{proposition}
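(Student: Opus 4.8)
The plan is to prove part (i) first and then use it together with the outer rate to obtain part (ii). For part (i), fix $1/2 < \alpha \le 1$ and let $\by^\ast$ be a limit point of $\Seq{\by}{k}$, say $\by^{k_j} \to \by^\ast$. By Theorem \ref{T:InnerRate}, the right-hand side of \eqref{T:InnerRate:0} tends to $0$ as $k \to \infty$ (note $(\ln k + 1)/k \to 0$), so $\varphi(\by^k) - \varphi(\bx') \to 0$, i.e. $\varphi(\by^k) \to \varphi(\bx') = \varphi^\ast$, the optimal value of \eqref{Prob:IP} (here $\bx' \in X' \subseteq X^\ast$). Since $\varphi$ is lower semicontinuous (as $f$ is continuous and $g$ is lsc), passing to the limit along the subsequence gives $\varphi(\by^\ast) \le \liminf_j \varphi(\by^{k_j}) = \varphi^\ast$, hence $\by^\ast \in X^\ast$. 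For the distance statement, suppose toward a contradiction that $\dist(\by^k, X^\ast) \not\to 0$; then along some subsequence $\dist(\by^{k_j}, X^\ast) \ge \varepsilon > 0$. But $\Seq{\by}{k}$ is bounded: by Lemma \ref{L:Basic}(ii) and Theorem \ref{T:Bounded}, $\norm{\by^k - \bx'} \le \norm{\bx^k - \bx'} \le D_1$. So this subsequence has a further convergent subsequence whose limit lies in $X^\ast$ by the argument just given, contradicting $\dist \ge \varepsilon$. Hence $\dist(\by^k, X^\ast) \to 0$.

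For part (ii), fix $1/2 < \alpha < 1$ and let $\by^\ast$ be a limit point of $\{\by^{k_{best}}\}_{k \in \nn}$, say $\by^{(k_j)_{best}} \to \by^\ast$. Since $k_{best} \ge k$, Theorem \ref{T:SimuRate} gives $\varphi(\by^{k_{best}}) - \varphi(\bx') \le 2H/k^\alpha \to 0$, so exactly as in part (i) (using lsc of $\varphi$) we get $\by^\ast \in X^\ast$; this is what lets us even talk about comparing $\omega$ values on the feasible set of \eqref{Prob:OP}. Next, the outer rate in Theorem \ref{T:SimuRate} gives $\omega(\by^{k_{best}}) - \omega(\bx') \le 2D^2/(ck^{1-\alpha}) \to 0$ since $1 - \alpha > 0$; because $\omega$ is continuous (Assumption \ref{A:Outer}), passing to the limit yields $\omega(\by^\ast) \le \omega(\bx') = \min_{\bx \in X^\ast} \omega(\bx)$. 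Combined with $\by^\ast \in X^\ast$, this forces $\by^\ast \in X'$. The distance statement then follows by the same boundedness-plus-subsequence argument: $\{\by^{k_{best}}\}$ is bounded (it is a subsequence-type selection from the bounded sequence $\Seq{\by}{k}$), so if $\dist(\by^{k_{best}}, X') \not\to 0$ we extract a subsequence bounded away from $X'$, pass to a convergent sub-subsequence, and contradict the fact that its limit lies in $X'$.

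The steps are all routine given the machinery already in place; the only points requiring a little care are (a) ensuring the correct semicontinuity/continuity is invoked for the two objectives — lower semicontinuity of $\varphi$ suffices for membership in $X^\ast$, while continuity of $\omega$ (which holds since $\omega : \real^n \to \real$ is finite-valued and convex, hence continuous) is used to pass the outer inequality to the limit — and (b) the standard "boundedness $\Rightarrow$ every subsequence has a convergent sub-subsequence with limit in the target set $\Rightarrow$ distance goes to zero" argument, which must be invoked for both $X^\ast$ and $X'$. I expect no genuine obstacle here; the substantive work was already done in Theorems \ref{T:InnerRate} and \ref{T:SimuRate} and the boundedness Theorem \ref{T:Bounded}.
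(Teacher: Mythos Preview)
Your proposal is correct and follows essentially the same approach as the paper: boundedness (via Theorem \ref{T:Bounded} and Lemma \ref{L:Basic}(ii)), convergence of function values from Theorems \ref{T:InnerRate} and \ref{T:SimuRate}, and lower semicontinuity to place limit points in the appropriate solution set. You actually supply more detail than the paper does---in particular, the explicit contradiction argument for the distance statements and, in part (ii), the two-step verification that the limit point is first feasible (in $X^{\ast}$) and then optimal (in $X'$)---whereas the paper dispatches (ii) with ``similar arguments''.
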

	\begin{proof}
    		First, from Theorem \ref{T:Bounded} the sequence $\Seq{\by}{k}$ is bounded (see also Lemma \ref{L:Basic}(ii)). We need to show that any convergent sub-sequence $\left\{ \by^{i_{k}} \right\}_{k \in \nn}$ converges to some point in $X^{\ast}$. Thus, we take a certain converging sub-sequence $\left\{ \by^{i_{k}} \right\}_{k \in \nn}$. We denote its limit point with $\hat{\by}$. From Theorem \ref{T:InnerRate}, we have that
		\begin{equation*}
        		\lim\limits_{k \rightarrow \infty} \varphi\left(\by^{k}\right) = \varphi^{\ast},
		\end{equation*}
		where $\varphi^{\ast}$ is the optimal value of the inner problem \eqref{Prob:IP}. Thus, by using the fact that $\varphi$ is lower semi-continuous, we have that
		\begin{equation*}
	        \varphi\left(\hat{\by}\right) \leq \liminf\limits_{k \rightarrow \infty} \varphi\left(\by^{i_{k}}\right) = \lim\limits_{k \rightarrow \infty} \varphi\left(\by^{k}\right) = \varphi^{\ast},
		\end{equation*}
		which proves that $\hat{\by} \in X^{\ast}$, as desired.
\medskip

		The second item follows similar arguments on the bounded sequence $\left\{ \by^{k_{best}} \right\}_{k \in \nn}$, where Theorem \ref{T:InnerRate} is replaced with Theorem \ref{T:SimuRate}.
	\end{proof}

\subsection{Rate of Convergence of Bi-SG - Strongly Convex Case}
	In this part, we show that Bi-SG with choice (V2) of the mapping $\Omega_{k} \equiv G_{\eta_{k}}^{\sigma , \psi}$, $k \in \nn$, guarantees a better convergence rate in terms of the outer objective function $\omega$. In order to derive the improved convergence rate, we will make the following assumption (in addition to the previous Assumptions \ref{A:Composite}, \ref{A:Outer} and \ref{A:Outer2} (C2)).
	\begin{assumption} \label{A:Strong}
    			\item[$\rm{(i)}$] $\sigma$ is additionally $\beta$-strongly convex ($\beta > 0$).
    			\item[$\rm{(ii)}$] $g$ has in addition bounded sub-gradients on bounded subsets of $\real^{n}$.
	\end{assumption}
	We will prove the following rate of convergence result.
	\begin{theorem}[An outer linear rate of convergence] 
		Let $\Seq{\bx}{k}$ and $\Seq{\by}{k}$ be the two sequences generated by the Bi-SG Algorithm. For any $\bx' \in X'$ and $k \geq 1$, we have
		\begin{equation*}
    			\varphi\left(\bx^{\tilde{k}}\right) - \varphi\left(\bx'\right) \leq \frac{2H + D^{2}}{k^{\alpha}} \qquad \text{and} \qquad \omega\left(\bx^{\tilde{k}}\right) - \omega\left(\bx'\right) \leq \left(\frac{1}{e^{c\beta/4}}\right)^{k^{1 - \alpha}}D^{2},
		\end{equation*}
		where $\tilde{k} = \argmin \left\{ \omega\left(\bx^{j}\right) : \, k + 1 \leq j \leq 2k \right\}$. 
	\end{theorem}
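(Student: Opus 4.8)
The plan is to prove the two inequalities separately, for $1/2<\alpha<1$ (when $\alpha=1$ the asserted outer bound is a constant and follows a fortiori). Throughout I will use that, by Theorem~\ref{T:Bounded} and Lemma~\ref{L:Basic}(ii), the whole trajectory lies in the ball $B:=\{\bx:\norm{\bx-\bx'}\le D_{1}\}$, that $\norm{\Omega_{k}\left(\by^{k}\right)}\le D_{2}$, and that --- thanks to Assumption~\ref{A:Strong}(ii) and the Lipschitz gradient of $f$ --- the inner function $\varphi$ is finite-valued and Lipschitz on $B$. As in Remark~\ref{R:BoundedSub}, I will enlarge $D_{2}$ if necessary so that it also dominates the subgradients of $\varphi$ and of $\omega$ along the iterates; then, with $D=D_{1}+D_{2}$, one has $D_{2}^{2}\le D^{2}$ and $D_{1}D_{2}\le D^{2}$.

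For the inner estimate I would split $\varphi\left(\bx^{\tilde{k}}\right)-\varphi\left(\bx'\right)=\left(\varphi\left(\bx^{\tilde{k}}\right)-\varphi\left(\by^{\tilde{k}-1}\right)\right)+\left(\varphi\left(\by^{\tilde{k}-1}\right)-\varphi\left(\bx'\right)\right)$. Since $\tilde{k}-1\ge k$, Theorem~\ref{T:InnerRate} applied at index $\tilde{k}-1$ bounds the second bracket by $2H/(\tilde{k}-1)^{\alpha}\le 2H/k^{\alpha}$. For the first bracket I would use that $\bx^{\tilde{k}}=\by^{\tilde{k}-1}-\eta_{\tilde{k}-1}\Omega_{\tilde{k}-1}\left(\by^{\tilde{k}-1}\right)$, so that by the subgradient inequality for $\varphi$ and Cauchy--Schwarz it is at most $\norm{\zeta}\,\eta_{\tilde{k}-1}\norm{\Omega_{\tilde{k}-1}\left(\by^{\tilde{k}-1}\right)}$ for any $\zeta\in\partial\varphi\left(\bx^{\tilde{k}}\right)$, and since $\eta_{\tilde{k}-1}=c\,\tilde{k}^{-\alpha}\le k^{-\alpha}$ and $\norm{\zeta},\norm{\Omega_{\tilde{k}-1}\left(\by^{\tilde{k}-1}\right)}\le D_{2}$, this is at most $D^{2}/k^{\alpha}$. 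Summing the two bounds gives $(2H+D^{2})/k^{\alpha}$.

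For the outer estimate I would first dispose of the trivial case: if $\omega\left(\bx^{m}\right)<\omega\left(\bx'\right)$ for some $m\in[k+1,2k]$, then $\omega\left(\bx^{\tilde{k}}\right)-\omega\left(\bx'\right)<0$ and the (positive) right-hand side bounds it. So assume $\omega\left(\bx^{m}\right)\ge\omega\left(\bx'\right)$ for all $m\in[k+1,2k]$. For fixed $j\in\{k,\dots,2k-1\}$ I would invoke Proposition~\ref{P:ProximalInequality}, inequality~\eqref{P:ProximalInequality:1}, with $s=\sigma$, $h=\psi$, $t=\eta_{j}\le c\le 1/L_{\sigma}$, $\bx=\by^{j}$, $\by=\bx'$ (so that $\by^{j}-\eta_{j}G_{\eta_{j}}^{\sigma,\psi}\left(\by^{j}\right)=\bx^{j+1}$), and use $\beta$-strong convexity of $\sigma$ in the form $\ell_{\sigma}\left(\bx',\by^{j}\right)\ge\frac{\beta}{2}\norm{\by^{j}-\bx'}^{2}$, to get
\[
0\le\omega\left(\bx^{j+1}\right)-\omega\left(\bx'\right)\le\frac{1}{2\eta_{j}}\left(\norm{\by^{j}-\bx'}^{2}-\norm{\bx^{j+1}-\bx'}^{2}\right)-\frac{\beta}{2}\norm{\by^{j}-\bx'}^{2},
\]
where the left inequality is the case assumption at index $j+1\in[k+1,2k]$. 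Rearranging, and then using Lemma~\ref{L:Basic}(ii), this yields the per-step contraction $\norm{\bx^{j+1}-\bx'}^{2}\le(1-\eta_{j}\beta)\norm{\bx^{j}-\bx'}^{2}$. Telescoping over $j=k,\dots,2k-1$, using $1-t\le e^{-t}$ and $\sum_{j=k}^{2k-1}\eta_{j}\ge k\eta_{2k-1}=c\,2^{-\alpha}k^{1-\alpha}\ge\frac{c}{2}k^{1-\alpha}$, gives $\norm{\bx^{2k}-\bx'}^{2}\le e^{-(c\beta/2)k^{1-\alpha}}D_{1}^{2}$. Finally, since $2k\in[k+1,2k]$, I would bound $\omega\left(\bx^{\tilde{k}}\right)-\omega\left(\bx'\right)\le\omega\left(\bx^{2k}\right)-\omega\left(\bx'\right)\le\norm{\xi}\norm{\bx^{2k}-\bx'}$ for $\xi\in\partial\omega\left(\bx^{2k}\right)$, and with $\norm{\xi}\le D_{2}$ and the square root of the previous display this is $\le D_{1}D_{2}\,e^{-(c\beta/4)k^{1-\alpha}}\le D^{2}\left(e^{-c\beta/4}\right)^{k^{1-\alpha}}$.

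The main obstacle --- and the only genuinely new ingredient compared to Theorem~\ref{T:SimuRate} --- is the outer estimate. One must notice that strong convexity of $\sigma$ upgrades the mere telescoping behind the $O(1/k^{1-\alpha})$ rate to the true contraction $\norm{\bx^{j+1}-\bx'}^{2}\le(1-\eta_{j}\beta)\norm{\bx^{j}-\bx'}^{2}$ (available precisely because the dichotomy excludes the overshoot $\omega\left(\bx^{j+1}\right)<\omega\left(\bx'\right)$), whose accumulation over the window $[k,2k]$ places $\sum_{j=k}^{2k-1}\eta_{j}\asymp k^{1-\alpha}$ into the exponent; and then that the clean route is to bound the \emph{iterate} $\bx^{2k}$ rather than the $\omega$-gap directly, converting the resulting distance bound into an $\omega$-gap bound by one subgradient step of $\omega$ --- which is exactly what turns the exponent constant $c\beta/2$ into $c\beta/4$ via the square root. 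Everything else is routine absorption of constants into $D$ in the style of Remark~\ref{R:BoundedSub}.
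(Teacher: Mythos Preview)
Your proposal is correct and follows essentially the same route as the paper's proof: the same split $\varphi(\bx^{\tilde{k}})-\varphi(\bx')=\bigl(\varphi(\bx^{\tilde{k}})-\varphi(\by^{\tilde{k}-1})\bigr)+\bigl(\varphi(\by^{\tilde{k}-1})-\varphi(\bx')\bigr)$ for the inner estimate, and for the outer estimate the same dichotomy, the same use of \eqref{P:ProximalInequality:1} with strong convexity to obtain the per-step contraction $\norm{\bx^{j+1}-\bx'}^{2}\le(1-\eta_{j}\beta)\norm{\bx^{j}-\bx'}^{2}$, telescoped over $j=k,\dots,2k-1$, followed by a subgradient step of $\omega$ at $\bx^{2k}$. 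The only cosmetic difference is that the paper bounds each factor uniformly by $1-c\beta/(2k^{\alpha})$ and then invokes $(1-1/r)^{r}\le e^{-1}$, whereas you multiply out via $1-t\le e^{-t}$ and lower-bound $\sum_{j=k}^{2k-1}\eta_{j}$; both yield the same exponent $c\beta k^{1-\alpha}/4$ after the square root.
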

	\begin{proof}
		Let $k \geq 1$. We first prove the inner rate. To this end, we take $\xi \in \partial \varphi\left(\bx^{\tilde{k}}\right)$ and from Assumption \ref{A:Strong}(ii) we have that $\norm{\xi} \leq D_{2}$ (we use here $D_{2}$ just for the simplicity). Hence, by applying the sub-gradient inequality on the convex function $\varphi$, we obtain (recall \eqref{U:Outer})
		\begin{equation*}
			\varphi\left(\bx^{\tilde{k}}\right) - \varphi\left(\by^{\tilde{k} - 1}\right) \leq \act{\xi , \bx^{\tilde{k}} - \by^{\tilde{k} - 1}} = -\eta_{\tilde{k} - 1}\act{\xi , \Omega_{\tilde{k} - 1}\left(\by^{\tilde{k} - 1}\right)} \leq \eta_{\tilde{k} - 1}D^{2} \leq \frac{D^{2}}{k^{\alpha}},
		\end{equation*}		 
		where the second inequality follows from Theorem \ref{T:Bounded} and the fact that $\norm{\xi} \leq D_{2} < D$, while the last inequality follows from the fact that $\eta_{\tilde{k} - 1} \leq k^{-\alpha}$ (since $k \leq \tilde{k} - 1$ and $c \leq 1$). Now, from Theorem \ref{T:InnerRate} we obtain that 
	    \begin{equation*}
        		\varphi\left(\bx^{\tilde{k}}\right) - \varphi\left(\bx'\right) = \varphi\left(\by^{\tilde{k} - 1}\right) - \varphi\left(\bx'\right) + \varphi\left(\bx^{\tilde{k}}\right) - \varphi\left(\by^{\tilde{k} - 1}\right) \leq \frac{2H}{\left(\tilde{k} - 1\right)^{\alpha}} + \frac{D^{2}}{k^{\alpha}} \leq \frac{2H + D^{2}}{k^{\alpha}},
    		\end{equation*}   
    		where the second inequality follows from the fact that $k \leq \tilde{k} - 1$.
\medskip
		
		Now, we prove the outer rate. We split the proof into two cases. First, we assume that there exists $j \in \left\{ k + 1 , k + 2 , \ldots , 2k \right\}$ such that $\omega\left(\bx^{j}\right) \leq \omega\left(\bx'\right)$. In this case, the result directly follows from the definition of $\tilde{k}$. Second, if otherwise, we take an integer $j \in \left\{ k , k + 1 , \ldots , 2k - 1 \right\}$ and, by using Proposition \ref{P:ProximalInequality} with $s = \sigma$ $h = \psi$, $t = \eta_{j}$, $\bx = \by^{j}$ and $\by = \bx'$ (see \ref{P:ProximalInequality:1}), we obtain
    		\begin{equation*}
         	\omega\left(\bx'\right) - \omega\left(\bx^{j + 1}\right) \geq \frac{1}{2\eta_{j}}\left(\norm{\bx^{j + 1} - \bx'}^{2} - \norm{\by^{j} - \bx'}^{2}\right) + \ell_{\sigma}\left(\bx' , \by^{j}\right).
	    \end{equation*}
    		In addition, from the strong convexity of $\sigma$ (see Assumption \ref{A:Strong}(i)), we have that 
    		\begin{equation*}
        		\ell_{\sigma}\left(\bx' , \by^{j}\right) = \sigma\left(\bx'\right) - \sigma\left(\by^{j}\right) - \act{\nabla \sigma\left(\by^{j}\right) , \bx' - \by^{j}} \geq \frac{\beta}{2}\norm{\by^{j} - \bx'}^{2}.
	    \end{equation*}
    		Thus, by combining these two facts, we obtain that    
    		\begin{equation*}
        		\norm{\bx^{j + 1} - \bx'}^{2} \leq \left(1 - \eta_{j}\beta\right)\norm{\by^{j} - \bx'}^{2} + 2\eta_{j}\left(\omega\left(\bx'\right) - \omega\left(\bx^{j + 1}\right)\right) \leq \left(1 - \frac{c\beta}{2k^{\alpha}}\right)\norm{\bx^{j} - \bx'}^{2},
    		\end{equation*}
    		where the second inequality follows the facts that $\omega\left(\bx^{j + 1}\right) \leq \omega\left(\bx'\right)$ (which follows from the case assumption and the fact that $j + 1 \in \left\{ k + 1 , k + 2 , \ldots , 2k \right\}$) and the definition of $\eta_{j}$ (recall that $\alpha < 1$). Since this is true for any  $j \in \left\{ k , k + 1 , \ldots , 2k - 1 \right\}$, we obtain that
	    \begin{equation*}
        		\norm{\bx^{2k} - \bx'} \leq \left(1 - \frac{c\beta}{2k^{\alpha}}\right)^{k / 2}\norm{\bx^{k} - \bx'} \leq \left(\left(1 - \frac{c\beta}{2k^{\alpha}}\right)^{\frac{2k^{\alpha}}{c\beta}}\right)^{\frac{k^{1 - \alpha}c\beta}{4}}D_{1} \leq \left(\frac{1}{e^{c\beta/4}}\right)^{k^{1 - \alpha}}D_{1},
	    \end{equation*}
    		where the second inequality follows from Theorem \ref{T:Bounded} and the last inequality follows from the fact that $\left(1 - 1/r\right)^{r} \leq e^{-1}$ for any $r > 0$.
\medskip

    		Now, applying the sub-gradient inequality on the convex function $\omega\left(\cdot\right)$ with $\mu \in \partial\omega\left(\bx^{2k}\right)$, we have that   
    		\begin{equation*}
	        \omega\left(\bx^{2k}\right) - \omega\left(\bx'\right) \leq \act{\mu , \bx^{2k} - \bx'} \leq\norm{\mu}\cdot\norm{\bx^{2k} - \bx'} \leq \left(\frac{1}{e^{c\beta/4}}\right)^{k^{1 - \alpha}}D^{2}
    		\end{equation*}
    		where the last inequality follows from Theorem \ref{T:Bounded} and Remark \ref{R:BoundedSub} since  $D_{1}D_{2} < D^{2}$. The result follows now, directly, from the definition of $\tilde{k}$.
	\end{proof}

\section{Numerical Experiments} \label{Sec:Numerical Experiments}
	We have conduct numerical experiments in order to compare several existing algorithms for solving bi-level optimization problems. We have focused on two challenging tasks from the domain of machine learning:
	\begin{itemize}
		\item[$\rm{(i)}$] Fake News Classification.
		\item[$\rm{(ii)}$] Song Release Year Prediction.
	\end{itemize}
	In each case, we will apply the following five methods:
	\begin{itemize}
    	\item[$\rm{(i)}$] Bi-SG (Version II, with $\psi \equiv \omega$ and $\sigma \equiv \bo$), where we considered two values of $\alpha \in \left\{ 0.85 , 0.95\right\}$.
    	\item[$\rm{(ii)}$] BiG-SAM of \cite{SS2017}, where we applied the algorithm on the Moreau envelope of $\omega$ (instead of $\omega$ itself), since it is a non-smooth function. Therefore, it should be noted that BiG-SAM in this case tackles the smoothed bi-level problem and not the original non-smooth bi-level problem. We use two values of the parameter $\delta \in \left\{ 0.01 , 1\right\}$.
    	\item[$\rm{(iii)}$] IR-IG and deterministic aRB-IRG of \cite{AY2019}.
    	\item[$\rm{(iv)}$] FIBA (Fast Iterative Bilevel Algorithm) of \cite{HS2017}, which is a modification of the Fast Iterative Thresholding Algorithm (FISTA) to the bi-level optimization setting. The FIBA algorithm consists of three main sequences of hyper-parameters.
	\end{itemize}	
	 All experiments were run on 11th Gen Intel(R) Core(TM) i7-1195G7 @ 2.90GHz 2.92 GHz with a total RAM memory of 16GB and 4 physical cores (using a free account of Google cola notebook). We run each algorithm for $300$ seconds and provide details about both the inner and outer optimization problems. In terms of the inner optimization problem, for each method $1 \leq l \leq 7$, we record the following measure over time and
	\begin{equation*}
		\Delta^{l} \varphi\left(t\right) = \varphi\left(\bx^{(l)}_{t}\right) - \varphi^{\ast},
	\end{equation*}
	where $\bx_{t}^{(l)}$ is the point computed by the method $l$ after $t$ seconds, and $\varphi^{\ast}$ is the optimal value of $\varphi$, which we compute using an off-the-shelf solver\footnote{$https://scikit-learn.org/stable/modules/generated/sklearn.linear\_model.LogisticRegression.html$}. As for, the outer optimization problem, we present plot that record the progress of the values of $\omega$ versus the $-\log(\Delta\varphi)$.  

\subsection{Fake News Classification}	
	In this experiment, we wish to distinguish between fake and true news articles based on a dataset used in a broad competition held on kaggle website\footnote{see $https://www.kaggle.com/competitions/fake-news/overview$}. This dataset contains 10,413 true  and 10,387 fake labeled news articles, respectively. For each article, the data-set contains a label (that indicates whether it is true or fake) and three fields: `title', `author' and `text'. We use only the fields `title' and `author' (by concatenating them together) and randomly select 4,000 true and false articles. We pre-process the data, which means we stem every word\footnote{Stemming is the process of transforming a word to its root form. For example, $\text{bought} \Rightarrow \text{buy}$} and remove stop-words and all of the words with non-alphabetic characters\footnote{For example, the sentence `I was going to buy a tasty apple' is transformed to `I go buy taste apple'}. Afterwards, we represent each article as a `bag of bi-grams'. Formally, we denote by $A \in \real^{N \times m}$ the features matrix such that $\ba_{i}$, which denotes the $i$-th row of $A$ for any $1 \leq i \leq N$, contains the number of times a bi-gram appears in the $i$-th article, for all bi-grams appearing in the data-set. To demonstrate this idea, suppose that the list of the bi-grams appearing in at least one of the articles (in the concatenation of the field `title' with `author' after prepossessing) is [`he buy', `we cry', `beautiful dress', `beautiful house', `buy beautiful'] and the first article (after prepossessing) is ``he buy beautiful house". In this case, $a_{1} = \left(1 , 0 , 0 , 1 , 1\right)$.
\medskip

	A basic approach in machine learning for this task, is by minimizing the logistic regression function of the given data-set. More precisely, we are given a matrix $A \in \real^{N \times m}$ and a vector $\bz \in \real^{N}$, which contains the labels of the articles, the corresponding logistic loss function is defined by				
	\begin{equation*}
		\varphi\left(\bx\right) \equiv \frac{1}{N}\sum_{i = 1}^{N}\left(\log\left(w\left(\bx^{T}\ba_{i}\right)\right)\bz_{i} + \log\left(1 - w\left(\bx^{T}\ba_{i}\right)\right)\left(1 - \bz_{i}\right) \right),
	\end{equation*}
	where $w : \real \rightarrow \real_{++}$ is the sigmoid function defined by $w\left(t\right) = \exp^{t}/\left(1 + \exp^{t}\right)$ and $\bz = \left(\bz_{1} , \bz_{2} , \ldots , \bz_{N}\right)^{T}$ for which $\bz_{i} \in \left\{0 , 1 \right\}$. It is well-known that since the number of features $m$ is not negligible relative to the number of articles $N$, over-fitting may occur. To avoid it, one popular approach is to regularize the logistic objective function with another function $\omega\left(\cdot\right)$, that is, solving the following optimization problem
	\begin{equation*}
		\min_{\bx \in \real^{m}} \left\{ \varphi\left(\bx\right) + \lambda\omega\left(\bx\right) \right\},
	\end{equation*}
	where $\lambda > 0$ is a regularization parameter. As we discussed in the Introduction this raises the major question of how to determine the ``right" regularization parameter? Here bi-level optimization enters into an action, since it allows to overcome this difficulty by focusing on solving the following minimization
	\begin{equation*}
		\min_{\bx \in \real^{m}} \left\{ \omega\left(\bx\right) : \, \bx \in X^{\ast}\right\},
	\end{equation*}
	where $X^{\ast}$ is the set all minimizers of the logistic regression function $\varphi\left(\cdot\right)$. The bi-level approach can be used here to reduce the number of features in the vector $\bx$ obtained from minimizing the logistic loss $\varphi\left(\cdot\right)$. In other words, we wish to find the sparsest vector in between all the minimizers of the logistic regression function $\varphi\left(\cdot\right)$. Therefore, any outer function that promote sparsity could be relevant to achieve this goal. Here, we take the following function
	\begin{equation*}	
    		\omega\left(\bx\right) = \norm{\bx}_{1} + 0.05\norm{\bx}_{2}^{2}.
	\end{equation*}	
	We perform three different experiments, where in the $i$-th experiment, $1 \leq i \leq 3$, we take only the $\bb_{i}$ most frequent bi-grams, where $\bb_{i}$ denotes the $i$-th element of the vector $\bb = \left(1000 , 2500 , 5000\right)^{T}$. In each experiment we applied one of the five methods (Bi-SG, BiG-SAM, IR-IG, aRB-IRG and FIBA) that were mentioned above.
\newpage

 	\begin{figure}[h]
        \centering
        \includegraphics[width = 16cm]{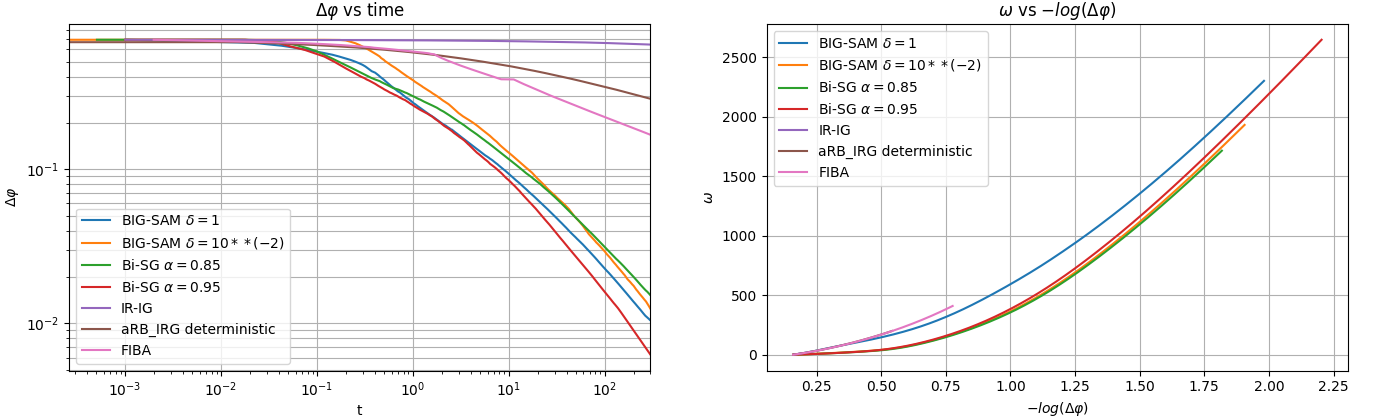}
		\caption{1000 parameters}
    \end{figure}

 	\begin{figure}[h]
        \centering
        \includegraphics[width = 16cm]{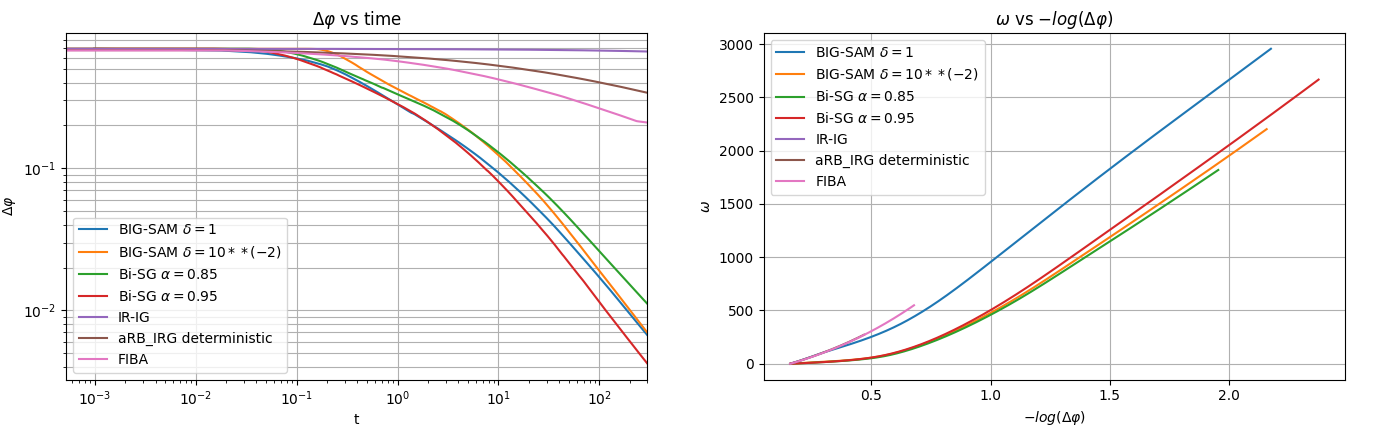}
		\caption{2500 parameters}
    \end{figure}

	Note, in the last experiment, there is an infinite number of optimal solutions to the inner optimization problem, because there are more features than articles. 
\medskip

 	\begin{figure}[h]
        \centering
        \includegraphics[width = 16cm]{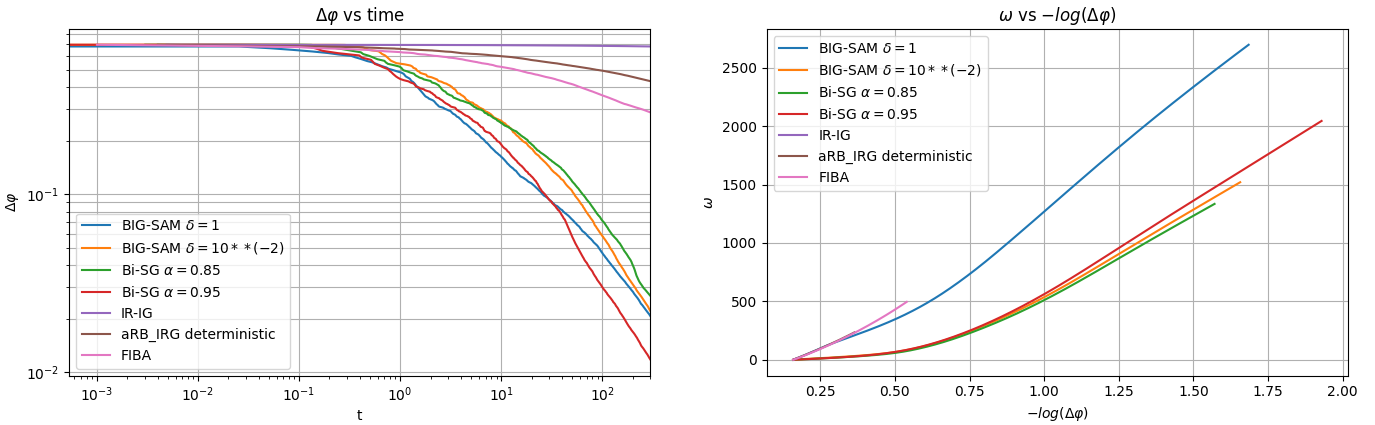}
		\caption{5000 parameters}
    \end{figure}

	A few comments on the numerical results are now in order. From all the figures on the left-hand side, we see that Bi-SG (with $\alpha = 0.95$) achieves the best results in terms on the inner objective function. More precisely, in all experiments, Bi-SG obtains the best optimality gap in terms of $\varphi$. Regarding FIBA, which requires three main sequences of hyper-parameters, when we set them with the original values that were suggested by the authors in \cite{HS2017}, the algorithm did not converge in all the experiments. Recall that in our case both the inner and outer functions are not Lipschitz continuous, as required to guarantee the convergence of FIBA. As a result, we tried different values for the aforementioned sequences until we have tuned them in order to find values for which the algorithm converges. We have witnessed in all the experiments that FIBA exhibited a very slow convergence rates in terms of both the inner and the outer objective functions (it should be noted that FIBA has no proven convergence rates).
\medskip

	Regarding the plots on the right-hand side we first would like to describe the motivation behind comparing the values of the inner and outer objective functions. In the $x$-axis we record the inner objective function optimality gap (after taking minus log), which means that as the value increases it means that the corresponding algorithm is more accurate in terms of solving the inner problem. Therefore, algorithms that were not able to obtain good accuracy in solving the inner problem appear as a short line in these plots (for example, IR-IG and aRB-IRG can't be seen in this plots since they almost didn't decrease the inner optimality gap as can be seen from the plots of the left-hand side). In between the algorithms that achieved a good accuracy in the inner optimality gap, we seek the algorithm that obtains the minimal value of $\omega$. For instance, in the plot of $2500$ parameters we see that if we compare the algorithms when the $x$-axis equals around $2$ then Bi-SG with $\alpha = 0.85$ is better than both versions of BiG-SAM and Bi-SG with $\alpha = 0.95$. However, since Bi-SG with $\alpha = 0.95$ achieves higher accuracy in the inner problem than all other algorithms its final value of $\omega$ is higher.
 
\subsection{Songs Release Year Prediction}
	In this experiment, we used the YearPredictionMSD dataset\footnote{https://archive.ics.uci.edu/ml/datasets/yearpredictionmsd} to train a linear regression model to predict songs release years. The data-set contains information on $515,345$ songs, with a release year ranging from 1992 to 2011. For each song, the data-set contains its release year and additional 90 attributes (about its sound timbres\footnote{Timbre(Pronounced Tam-ber) is the quality of a musical note. It is what makes a musical note sound different from another one. Words like round, brassy, sharp, or bright can be used to describe the timbre of a sound}).  We considered two samples of the data-set with $1000$ and $10,000$ uniformly i.i.d randomly selected songs (without replacements), respectively. For each sample, we use $A$ to denote the feature matrix induced from it. Therefore., the $(i, j)$ entry of $A$ is the value of the $j$-th attribute of the $i$-th song in the sample. In addition, we use $b$ to denote the release year of the songs in the sample. That is, the $i$-th entry in $b$ is the release year of the $i$-th song in the sample. Our goal is to train a predictor by finding a weights vector $x$ that minimizes the following objective function
\begin{equation} \tag{SI}
    \varphi\left(x\right) = \frac{1}{N}(1/2)\norm{Ax - b}^{2},
\end{equation}
where $N$ is the number of songs in the sample.
\medskip

	We also take several steps to pre-process the data. First, we use a min-max scaling technique\footnote{$https://en.wikipedia.org/wiki/Feature_scaling$}, then we concatenate the $\boldsymbol{1} = (1,1,\ldots, 1)^{T} \in \mathbb{R}^{N}$ vector to $A$. Hence, the model could learn the interceptor. In addition, we found out that $A^{T}A$ is PD (positive definitive) meaning that the Problem (SI) has a single optimal solution. In this case, there is no meaning to the bi-level setting. Hence, we decided to modify $A$ so $A^{T}A$ would be semi-positive definitive (PSD), which means that problem (SI) would have multiple optimal solutions by adding $90$ co-linear attributes to $A$. For each additional co-linear attribute, we uniformly randomly i.i.d sampled $10$ attributes from the original matrix $A$, without replacements and let $\Tilde{A}$ be the matrix that contains the values of these $10$ attributes for each song in $A$. Then, the $i$-th row in $\tilde{A}$ is the vector that contains the values of these $10$ attributes for the $i-$th song in the sample. In addition, we generated a vector of i.i.d uniformly randomly sampled values from the interval $[-1, 1]$. We denote this vector with $v \in \mathbb{R}^{10}$. Using $\Tilde{A}$ and $v$, we compute the additional co-linear attribute as $\Tilde{A}v$. This results in that $A$ having co-linear columns (i.e., it has columns that are a linear combination of other columns). Now, our goal is to solve the following outer-level problem
\begin{equation} \tag{SO}
    \begin{aligned}
    \min_{x \in \mathbb{R}^{n}} \quad & \norm{x}_{1} + 0.05\norm{x}^{2} \\
    \textrm{s.t} \quad & x \in \argmin_{y \in \mathbb{R}^{n}} \varphi(y),
    \end{aligned}
\end{equation}
	which means that in this case $\omega = \norm{x}_{1} + 0.05\norm{x}$. Our goal is to use different algorithms to solve problem (SO) and compare them in terms of both the inner and outer objective functions values. In order to achieve this goal, we computed and compared the values of $\varphi$ and $\omega$ over time for all the algorithms mentioned above. We compute the optimal value of the inner function using an off-the-shelf solver\footnote{Visit $http://scikit-learn.org/stable/modules/generated/sklearn.linear_model.LinearRegression.html$} and denote it with $\varphi^{\ast}$. We used the same experimental configurations as in the previous experiment. The results are recorded next.
	
 	\begin{figure}[h]
        \centering
        \includegraphics[width = 16cm]{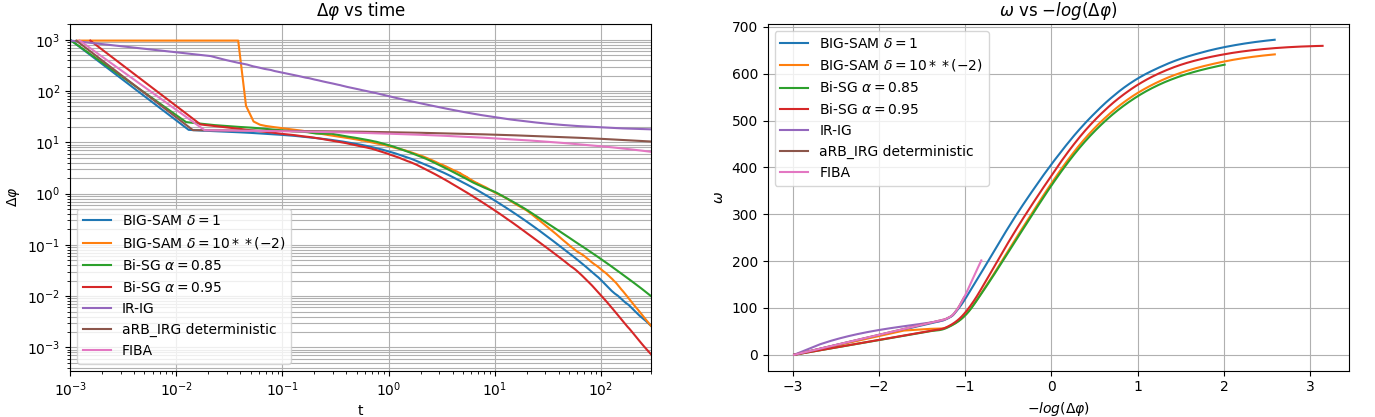}
		\caption{1000 parameters}
    \end{figure}

 	\begin{figure}[h]
        \centering
        \includegraphics[width = 16cm]{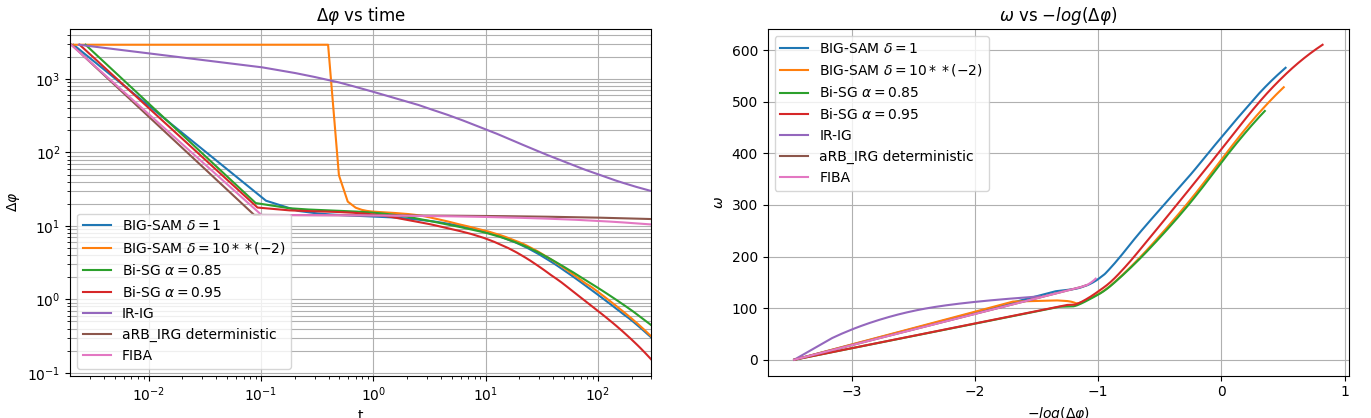}
		\caption{10000 parameters}
    \end{figure}

	In this experiment, we also see that Bi-SG with $\alpha = 0.95$ is the best algorithm in terms of the inner objective function. Regarding the plots on the right-hand side, we see a similar situation as in the previous experiment, since Bi-SG with $\alpha = 0.85$ achieves the lowest value of $\omega$ among the algorithms that obtain a good accuracy in terms of the inner objective function.
	
\section{Conclusion}
	In this paper, we developed the first-order algorithm Bi-SG for solving bi-level optimization problems, where the inner objective function $\varphi$, is of a classical composite structure, and that the outer objective function $\omega$ satisfies several mild assumptions, that do not include differentiabilty nor strong convexity. The Bi-SG algorithm enjoys convergence rates in terms of both the inner and outer objective functions values. In addition, we analyzed Bi-SG in the special setting where the outer objective function $\omega$ is strongly convex. In this case, we proved that Bi-SG enjoys a linear rate of convergence in terms of the outer objective function. Moreover, we compared Bi-SG with a few state-of-the-art algorithms for bi-level optimization problems by applying them on diverse types of machine learning problems.
\clearpage
\bibliographystyle{plain}
\bibliography{notes}

\def\cprime{$'$}
\begin{thebibliography}{10}

\bibitem{B2017-B}
A.Beck.
\newblock {\em First-Order Methods in Optimization}, volume~25 of {\em MOS-SIAM
  Series on Optimization}.
\newblock Society for Industrial and Applied Mathematics (SIAM), Philadelphia,
  PA, 2017.

\bibitem{AY2019}
M.~Amini and F.~Yousefian.
\newblock An iterative regularized incremental projected subgradient method for
  a class of bilevel optimization problems.
\newblock In {\em American Control Conference (ACC)}, pages 4069--4074, 2019.

\bibitem{BC2011-B}
H.~H. Bauschke and P.~L. Combettes.
\newblock {\em Convex Analysis and Monotone Operator Theory in {H}ilbert
  Spaces}.
\newblock CMS Books in Mathematics/Ouvrages de Math\'ematiques de la SMC.
  Springer, New York, 2011.

\bibitem{BS2014}
A.~Beck and S.~Sabach.
\newblock A first order method for finding minimal norm-like solutions of
  convex optimization problems.
\newblock {\em Math. Program.}, 147(1-2, Ser. A):25--46, 2014.

\bibitem{BT10}
A.~Beck and M.~Teboulle.
\newblock Gradient-based algorithms with applications to signal-recovery
  problems.
\newblock In {\em Convex optimization in signal processing and communications},
  pages 42--88. Cambridge Univ. Press, Cambridge, 2010.

\bibitem{DZ2020}
S.~Dempe and A.~Zemkoho.
\newblock {\em Bilevel Optimization}, volume 161 of {\em Springer Optimization
  and Its Applications}.
\newblock Springer, Berlin, 2020.

\bibitem{GRV2018}
G.~Garrigos, L.~Rosasco, and S.~Villa.
\newblock Iterative regularization via dual diagonal descent.
\newblock {\em J. Math. Imaging Vis.}, 60:189--215, 2018.

\bibitem{HS2017}
E.~S. Helou and L.~E.~A. Sim{\~o}es.
\newblock {$\epsilon$}-subgradient algorithms for bilevel convex optimization.
\newblock {\em Inverse Problems}, 33(5):055020, 2017.

\bibitem{KY2021}
H.~D. Kaushik and F.~Yousefian.
\newblock A method with convergence rates for optimization problems with
  variational inequality constraints.
\newblock {\em SIAM J. Optim.}, 31(3):2171--2198, 2021.

\bibitem{SS2017}
S.~Sabach and S.~Shtern.
\newblock A first order method for solving convex bilevel optimization
  problems.
\newblock {\em SIAM J. Optim.}, 27(2):640--660, 2017.

\bibitem{SVZ2021}
Y.~Shehu, P.~T. Vuong, and A.~Zemkoho.
\newblock An inertial extrapolation method for convex simple bilevel
  optimization.
\newblock {\em Optimization Methods and Software}, 36(1):1--19, 2021.

\bibitem{SNK2022}
L.~Shen, N.~Ho-Nguyen, and F.~Kilinc-Karzan.
\newblock An online convex optimization-based framework for convex bilevel
  optimization.
\newblock {\em Math. Program.}, 2022.

\bibitem{S07}
M.~Solodov.
\newblock An explicit descent method for bilevel convex optimization.
\newblock {\em J. Convex Anal.}, 4(2):227--237, 2007.

\bibitem{S2007}
M.~V. Solodov.
\newblock A bundle method for a class of bilevel nonsmooth convex minimization
  problems.
\newblock {\em SIAM Journal on Optimization}, 18(1):242--259, 2007.

\bibitem{TA77-B}
A.~N. Tikhonov and V.~Y. Arsenin.
\newblock {\em Solutions of ill-posed problems}.
\newblock V. H. Winston \& Sons, Washington, D.C.: John Wiley \& Sons, New
  York-Toronto, Ont.-London, 1977.
\newblock Translated from the Russian, Preface by translation editor Fritz
  John, Scripta Series in Mathematics.

\bibitem{X2011}
H.-K. Xu.
\newblock Averaged mappings and the gradient-projection algorithm.
\newblock {\em J. Optim. Theory Appl.}, 150(2):360--378, 2011.

\bibitem{YYY11}
I.~Yamada, M.~Yukawa, and M.~Yamagishi.
\newblock Minimizing the {M}oreau envelope of nonsmooth convex functions over
  the fixed point set of certain quasi-nonexpansive mappings.
\newblock In {\em Fixed-Point Algorithms for Inverse Problems in Science and
  Engineering}, pages 345--390. Springer New York, 2011.

\bibitem{F2021}
F.~Yousefian.
\newblock Bilevel distributed optimization in directed networks.
\newblock In {\em 2021 American Control Conference (ACC)}, pages 2230--2235,
  2021.

\end{thebibliography}
\end{document}